\newtheorem{theorem}{Theorem}[section]
\newtheorem{lemma}[theorem]{Lemma}
\newtheorem{proposition}[theorem]{Proposition}
\newtheorem{corollary}[theorem]{Corollary}
\theoremstyle{definition}
\newtheorem{example}[theorem]{Example}
\newcommand{\op}[1]{\textrm{\upshape #1}}
\newcommand{\join}{\vee}
\newcommand{\meet}{\wedge}
\newcommand{\la}{\langle}
\newcommand{\ra}{\rangle}
\newcommand{\alg}[1]{{\textbf{\upshape #1}}}  %
\newcommand{\vv}[1]{\mathsf {#1}}
\newcommand{\f}{\varphi}
\renewcommand{\th}{\theta}
\newcommand{\sse}{\subseteq}
\newcommand{\app}{\approx}
\newcommand{\HH}{{\mathbf H}}  
\newcommand{\II}{{\mathbf I}} 
\newcommand{\SU}{{\mathbf S}} 
\newcommand{\PP}{{\mathbf P}}   
\newcommand{\VV}{{\mathbf V}}   
\newcommand{\MTL}{\vv M\vv T\vv L}
\newcommand{\GMTL}{\vv G\vv M\vv T\vv L}
\newcommand{\BL}{\vv B\vv L}
\newcommand{\BH}{\vv B\vv H}
\newcommand{\ib}{\item[$\bullet$]}
\newcommand{\vuc}[2]{#1_1,\dots,#1_{#2}}
\newcommand{\imp}{\rightarrow}
\begin{document}
\title{Projectivity in (bounded) commutative integral\\ residuated lattices}
\author{Paolo Aglian\`{o}\\
DIISM\\
Universit\`a di Siena, Italy\\
agliano@live.com
\and
 Sara Ugolini\\
 Artificial Intelligence Research institute (IIIA), CSIC\\
 Bellaterra, Barcelona, Spain\\
 sara@iiia.csic.es}
\date{}
\maketitle

\begin{abstract}
In this paper we study projective algebras in varieties of (bounded) commutative integral residuated lattices. We make use of a well-established construction in residuated lattices, the ordinal sum, and the order property of divisibility. Via the connection between projective and splitting algebras, we show that the only finite projective algebra in $\mathsf{{FL}_{ew}}$ is the two-element Boolean algebra. Moreover, we show that several interesting varieties have the property that every finitely presented algebra is projective, such as locally finite varieties of hoops. Furthermore, we show characterization results for finite projective Heyting algebras, and finitely generated projective algebras in locally finite varieties of bounded hoops and BL-algebras.
Finally, we connect our results with the algebraic theory of unification.
\end{abstract}


\section{Introduction}\label{sec:intro}
In this paper we approach the study of projective algebras in varieties of (bounded) commutative integral residuated lattices. Our point of view is going to be  algebraic, however the reader may keep in mind that projectivity is a categorical concept, and therefore our findings pertain the corresponding algebraic categories as well. Being projective in a variety of algebras, or in any class containing all of its free objects, corresponds to being a retract of a free algebra, and projective algebras contain relevant information both on their variety and on its lattice of subvarieties.

In particular, as first noticed by McKenzie \cite{McKenzie1972}, there is a close connection between projective algebras in a variety and {\em splitting algebras} in its lattice of subvarieties. The notion of splitting comes from lattice theory, and studying splitting algebras is particularly useful to understand lattices of subvarieties. Essentially, a splitting algebra generates a variety that divides the subvariety lattice in the disjoint union of its principal filter and a principal ideal generated by a variety axiomatizable by a single identity (called {\em splitting equation}). In particular, we shall see that subdirectly irreducible projective algebras are splitting for the variety they belong to.

The varieties of algebras which are object of our study are relevant both in the realm of algebraic logic and from a purely algebraic point of view. In fact, residuated structures arise naturally in the study of many interesting algebraic systems, such as ideals of rings \cite{WardDilworth39} or lattice-ordered groups \cite{BCGJT}, besides encompassing the equivalent algebraic semantics (in the sense of Blok-Pigozzi \cite{BlokPigozzi1989}) of so-called substructural logics. We refer the reader to \cite{GJKO} for detailed information on this topic. The Blok-Pigozzi notion of algebraizability entails that the logical deducibility relation is fully and faithfully represented by the algebraic equational consequence of the corresponding algebraic semantics, and therefore logical properties can be studied algebraically, and viceversa. Substructural logics are a large framework and include many of the interesting non-classical logics: intuitionistic logics, relevance logics, and fuzzy logics to name a few, besides including classical logic as a special case.
Therefore, substructural logics on one side, and residuated lattices on the other, constitute a wide unifying framework in which very different structures can be studied uniformly.

The investigation of projective structures in particular varieties of residuated lattices has been approached by several authors (see for instance \cite{BalbesHorn1970,CabrerMundici2009,DantonaMarra2006,DiNolaGrigoliaLettieri2008,Ghilardi1997,Ugolini2022}). However, to the best of our knowledge, no effort has yet been done to provide a uniform approach in a wider framework, which is what we attempt to start in this manuscript.

In the general setting of $\mathsf{FL_{ew}}$-algebras, which are bounded commutative integral residuated lattices, via the connection with splitting algebras, we show that the only finite projective algebra in $\mathsf{FL_{ew}}$ is the two-element Boolean algebra $\alg 2$. Besides some general findings on $\mathsf{FL_{ew}}$-algebras, our other main results will concern varieties where the lattice order is  the inverse divisibility ordering, that is, where $x \leq y$ if and only if there exists $z$ such that $x = z \cdot y$. We show that several interesting varieties in the realm of algebraic logic have the property that every finitely presented algebra is projective, such as, in particular, all locally finite varieties of hoops and their implicative reducts. Moreover, as a consequence of some general results about projectivity in varieties closed under the construction of ordinal sums, we show an alternative proof of the characterization of finite projective Heyting algebras shown in \cite{BalbesHorn1970}. 

Interestingly, the study of projective algebras in this realm has a relevant logical application. Indeed, following the work of Ghilardi \cite{Ghilardi1997}, the study of projective algebras in a variety is strictly related to unification problems for the corresponding logic. More precisely, the author shows that a unification problem can be interpreted as a finitely presented algebra, and solving such a problem means finding an homomorphism to a projective algebra. We will explore some consequences of our study in this context in Section \ref{section:unif}.

We believe that our results, despite touching several relevant subvarieties of $\mathsf{FL_{ew}}$, have only scratched the surface of the study of projectivity in this large class of algebras, and shall serve as ground and inspiration for future work.

\section{Preliminaries}
Given a class $\vv K$ of algebras, an algebra $\alg A \in \vv K$ is {\em projective} in $\vv K$ if for all $\alg B,\alg C \in \vv K$, any homomorphism $h\colon \alg A \longmapsto \alg C$, and any surjective homomorphism $g\colon \alg B\longmapsto \alg C$, there is a homomorphism $f\colon \alg A \longmapsto \alg B$ such that $h=gf$.

\begin{figure}[htbp]
\begin{center}
\begin{picture}(200,70)
\put(60,60){\makebox(0,0){$\alg A$}}
\put(135,60){\makebox(0,0){$\alg C$}}
\put(135,10){\makebox(0,0){$\alg B$}}
\put(70,60){\vector(1,0){55}}
\put(135,20){\vector(0,1){30}}
\put(135,20){\vector(0,1){25}}
\put(65,53){\vector(3,-2){60}}
\put(91,22){\makebox(0,0){\footnotesize$f$}}
\put(145,32){\makebox(0,0){\footnotesize$g$}}
\put(98,68){\makebox(0,0){\footnotesize$h$}}
\end{picture}
\end{center}
\end{figure}
\vspace{-.15 in}

Determining the projective algebras in a class is usually a challenging problem, especially in a general setting. If however $\vv K$ contains all the free algebras on $\vv K$ (in particular, if $\vv K$ is a variety of algebras), projectivity admits a simpler formulation. We call an algebra $\alg B$ a {\em retract} of an algebra $\alg A$ if there is a homomorphism $g\colon \alg A \longmapsto \alg B$ and a homomorphism $f\colon\alg B \longmapsto \alg A$ with $gf= \op{id}_\alg B$ (and thus, necessarily, $f$ is injective and $g$ is surjective). The following theorem was proved first by Whitman for lattices \cite{Whitman1941} but it is well-known to hold for any class of algebras.

\begin{theorem}\label{whitman} Let $\vv K$ be a class of algebras containing all the free algebras in $\vv K$ and let $\alg A \in \vv K$. Then the following are equivalent:
\begin{enumerate}
\item $\alg A$ is projective in $\vv K$.
\item $\alg A$ is a retract of  a free algebra in $\vv K$.
\item $\alg A$ is a retract of a projective algebra in $\vv K$.
\end{enumerate}
In particular every free algebra in $\vv K$ is projective in $\vv K$.
\end{theorem}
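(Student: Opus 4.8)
The plan is to prove the equivalence of the three conditions in a cycle, treating the final ``In particular'' assertion as a consequence.

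The plan is to prove the three conditions equivalent by establishing the cycle $(1) \Rightarrow (2) \Rightarrow (3) \Rightarrow (1)$, after first isolating the key fact that every free algebra in $\vv K$ is projective in $\vv K$; this auxiliary fact simultaneously yields the concluding ``in particular'' assertion.

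First I would prove the claim that a free algebra $\alg F = \alg F_{\vv K}(X)$ is projective. Given $\alg B, \alg C \in \vv K$, a surjection $g \colon \alg B \to \alg C$, and a homomorphism $h \colon \alg F \to \alg C$, I would use surjectivity of $g$ to choose, for each free generator $x \in X$, an element $b_x \in B$ with $g(b_x) = h(x)$ (this invokes the axiom of choice). The universal property of the free algebra extends the assignment $x \mapsto b_x$ to a homomorphism $f \colon \alg F \to \alg B$, and since $gf$ and $h$ agree on the generating set $X$ they agree on all of $\alg F$; hence $h = gf$ and $\alg F$ is projective. This also proves the final sentence of the theorem.

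With the claim in hand, $(2) \Rightarrow (3)$ is immediate, as a free algebra is in particular projective. For $(3) \Rightarrow (1)$ I would argue directly from the lifting property: suppose $\alg A$ is a retract of a projective $\alg P$, witnessed by $f \colon \alg A \to \alg P$ and $g \colon \alg P \to \alg A$ with $gf = \op{id}_\alg A$. Given $h \colon \alg A \to \alg C$ and a surjection $q \colon \alg B \to \alg C$, projectivity of $\alg P$ lifts $hg \colon \alg P \to \alg C$ through $q$ to some $f' \colon \alg P \to \alg B$ with $qf' = hg$; then $f'f \colon \alg A \to \alg B$ satisfies $q(f'f) = (hg)f = h(gf) = h$, so $\alg A$ is projective. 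Finally, for $(1) \Rightarrow (2)$ I would take the free algebra $\alg F_{\vv K}(A)$ on the underlying set of $\alg A$, which lies in $\vv K$ by hypothesis and admits a surjection $g \colon \alg F_{\vv K}(A) \to \alg A$ extending the identity on generators; applying projectivity of $\alg A$ to $h = \op{id}_\alg A$ and this surjection $g$ produces $f \colon \alg A \to \alg F_{\vv K}(A)$ with $gf = \op{id}_\alg A$, exhibiting $\alg A$ as a retract of a free algebra.

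The argument is essentially a diagram chase, so I expect no deep obstacle; the one point requiring care is the bookkeeping about membership in $\vv K$. Since projectivity is defined by quantifying only over algebras of $\vv K$, every auxiliary object used — the free cover $\alg F_{\vv K}(A)$ in $(1) \Rightarrow (2)$ and the projective algebra $\alg P$ in $(3) \Rightarrow (1)$ — must be verified to belong to $\vv K$ before the lifting property can be invoked. This is precisely where the hypothesis that $\vv K$ contains all its free algebras enters, and it explains why the theorem is stated for such classes rather than for arbitrary $\vv K$.
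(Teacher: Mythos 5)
Your proof is correct. Note that the paper itself gives no proof of this theorem---it is stated as well known, with a reference to Whitman---and your argument (free algebras are projective via the universal property and choice of preimages of generators, then the cycle $(1)\Rightarrow(2)\Rightarrow(3)\Rightarrow(1)$ using the free cover $\alg F_{\vv K}(A)$ and composition of retractions with liftings) is precisely the standard argument the paper is implicitly invoking, including the correct attention to where the hypothesis that $\vv K$ contains its free algebras is needed.
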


It follows immediately that if $\vv V$ is a variety and $\alg A$ is projective in $\vv V$, then it is projective for any subvariety $\vv W$ of $\vv V$ to which it belongs; equivalently, if $\alg A \in \vv W$ is not projective in $\vv W$ then it cannot be projective in any supervariety of $\vv W$.

Let us also observe that the algebraic definition of projectivity we have given corresponds to a categorical notion for the corresponding algebraic category (where the objects are the algebras in $\vv K$ and whose morphisms are the homomorphisms between algebras in $\vv K$), where surjective homomorphisms are regular epimorphisms.  It therefore follows that projectivity is preserved by categorical equivalence:
the projective objects in one category are exactly the images, through the  functor yielding the equivalence, of the projective objects in the other category.

In general, determining all projective algebras in a variety is a complicated task. As examples of success in characterizing projective algebras in interesting classes we can quote:
\begin{enumerate}
\ib A finite lattice is projective in the variety of all lattices if and only if it semidistributive and satisfies Whitman's condition (W) (see \cite{Nation1982} for details).
\ib A finite distributive lattice is projective in the variety of distributive lattices if and only if the meet of any two meet irreducible elements is again meet irreducible \cite{Balbes1967}.
\ib A Boolean algebra is projective in the variety of distributive lattices if and only if it is finite \cite{Balbes1967}; hence every finite Boolean algebra is projective in the variety of Boolean algebras.
\ib An abelian group is projective in the variety of abelian groups if and only if it is free.
\ib If $\vv M_\alg D$ is the variety of left modules on a principal ideal domain $\alg D$, then a module is projective in $\vv M_\alg D$ if and only if it is free (this is a consequence of the so called {\em Quillen-Suslin Theorem}, see \cite{Quillen1976}).  
\ib Even more generally \cite{Quackenbush1971}, if $\alg A$ is a finite quasi-primal algebra, then if $\alg A$ has no minimal nontrivial subalgebra, each finite algebra in $\VV(\alg A)$ is projective;
alternatively a finite algebra in $\VV(\alg A)$ is projective if and only if it admits each minimal subalgebra of $\alg A$ as a direct decomposition factor.
\end{enumerate}
In what follows, we will be mainly interested in projective algebras that are finitely generated. We will see that in many interesting cases, these will correspond to \emph{finitely presented} algebras in the variety.
An algebra is finitely presented if it can be defined by a finite number of generators and satisfies finitely many identities.
For any set $X$ and any variety $\vv V$ we will denote by $\alg F_\vv V (X)$ the free algebra in $\vv V$ over $X$. Thus, we say that an algebra $\alg A\in \vv V$ is {\em finitely presented} in $\vv V$ if there is a finite set $X$ and a compact congruence $\th \in \op{Con}(\alg F_\vv V (X))$ such that $\alg F_\vv V(X)/\th \cong \alg A$.

Note that if $\vv V$ has finite type, every finite algebra in $\vv V$ is finitely presented, and if $\vv V$ is locally finite every finitely presented algebra in $\vv V$ is finite. We remark that the notion of finitely presented algebra is a categorical notion \cite{GabrielUllmer1971}, and thus it is preserved under categorical equivalence.
The proof of the following theorem is standard (the reader can see \cite{Ghilardi1997}).

\begin{theorem}\label{finitelypresented} For a finitely presented algebra $\alg A \in \vv V$ the following are equivalent:
\begin{enumerate}
\item $\alg A$ is projective in $\vv V$;
\item $\alg A$ is projective in the class of all finitely presented algebras in $\vv V$;
\item $\alg A$ is a retract of a finitely generated free algebra in $\vv V$.
\end{enumerate}
\end{theorem}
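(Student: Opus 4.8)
The plan is to establish the cycle of implications $(1)\Rightarrow(2)\Rightarrow(3)\Rightarrow(1)$, using Theorem \ref{whitman} together with the presentation of $\alg A$ as a quotient of a finitely generated free algebra. The implication $(1)\Rightarrow(2)$ is immediate and requires no real work: the class of finitely presented algebras in $\vv V$ is a subclass of $\vv V$, so every lifting problem posed within that class is in particular a lifting problem in $\vv V$; hence projectivity in $\vv V$ restricts automatically to projectivity in the smaller class.

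The substance of the argument lies in $(2)\Rightarrow(3)$. Since $\alg A$ is finitely presented, I would fix a finite set $X$ and a compact congruence $\th\in\op{Con}(\alg F_\vv V(X))$ with $\alg F_\vv V(X)/\th\cong\alg A$, and write $g\colon\alg F_\vv V(X)\longmapsto\alg A$ for the associated surjection. The key observation is that the finitely generated free algebra $\alg F_\vv V(X)$ is itself finitely presented, being presented by $X$ together with the trivial congruence $\D$, which is compact as it is finitely (indeed vacuously) generated. Thus $g$ is a surjective homomorphism between two finitely presented algebras, and applying the hypothesis $(2)$ to the identity map $\op{id}_\alg A\colon\alg A\longmapsto\alg A$ along $g$ produces a homomorphism $f\colon\alg A\longmapsto\alg F_\vv V(X)$ with $gf=\op{id}_\alg A$. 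This exhibits $\alg A$ as a retract of the finitely generated free algebra $\alg F_\vv V(X)$, which is exactly $(3)$.

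Finally, $(3)\Rightarrow(1)$ follows directly from Theorem \ref{whitman}: a finitely generated free algebra is free and hence projective in $\vv V$, and since a retract of a projective algebra is again projective (part $(3)$ of that theorem), $\alg A$ is projective in $\vv V$. This closes the cycle and yields all three equivalences.

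As for the main obstacle, the result is essentially a bookkeeping exercise once Theorem \ref{whitman} is in hand, and the only genuine subtlety is that one cannot simply invoke Whitman's theorem for the class of finitely presented algebras, since that class fails to contain the infinitely generated free algebras. This is precisely why $(2)\Rightarrow(3)$ must be argued through the explicit presentation surjection $g$ rather than through an abstract retract-of-free statement, and why the observation that a finitely generated free algebra is \emph{itself} finitely presented is the crux of the whole argument.
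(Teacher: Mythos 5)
Your proof is correct: the observation that $\alg F_\vv V(X)$ is itself finitely presented (via the compact congruence $\D$), which lets you apply (2) to the presentation surjection, is exactly the right crux, and the cycle $(1)\Rightarrow(2)\Rightarrow(3)\Rightarrow(1)$ closes via Theorem \ref{whitman} as you say. The paper itself omits the proof as ``standard,'' deferring to Ghilardi's paper, and your argument is precisely that standard one.
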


\subsection{(Bounded) commutative integral residuated lattices}
In the sequel we will deal mainly with (bounded) commutative and integral residuated lattices.

A commutative integral residuated lattice (or CIRL) is an algebra\\ $\la A,\join,\meet, \cdot, \imp,1\ra$ such that
\begin{enumerate}
\item $\la A,\join,\meet, 1 \ra$ is a lattice with largest element $1$;
\item $\la A,\cdot,1\ra$ is a commutative monoid;
\item $(\cdot,\imp)$ form a residuated pair w.r.t. the lattice ordering, i.e. for all $a,b,c \in A$
$$
a \cdot b \le c\qquad\text{if and only if}\qquad a \le b \imp c.
$$
\end{enumerate}
In what follows, we will often write $xy$ for $x \cdot y$. We call {\em bounded commutative integral residuated lattice} a CIRL with an extra constant $0$ in the signature that is the least element in the lattice order.

(Bounded) commutative and integral residuated lattices form varieties called ($\mathsf{FL_{ew}}$) $\mathsf{CIRL}$. Some of the results of this paper will concern commutative and integral residuated (meet-){\em semilattices}, that is, the $\lor$-free subreducts of commutative and integral residuated lattices.

Residuated lattices are rich structures, and many equations hold in them; for an equational axiomatization and a list of valid identities we refer the reader to \cite{BlountTsinakis2003}. We focus on two equations that bear interesting consequences, i.e., prelinearity and divisibility:
\begin{align}
&(x \imp y) \join (y \imp x) \app 1.\tag{prel}\\
&x(x \imp y) \app y(y \imp x); \tag{div}
\end{align}
It can be shown (see \cite{BlountTsinakis2003} and \cite{JipsenTsinakis2002}) that a subvariety of $\mathsf{FL_{ew}}$ satisfies the prelinearity equation (prel) if and only if any algebra therein is a subdirect product of totally ordered algebras, and this implies via Birkhoff's Theorem that all the subdirectly irreducible algebras are totally ordered. Such varieties are called {\em representable} (or {\em semilinear}) and the subvariety axiomatized by (prel) is the largest subvariety of $\mathsf{FL_{ew}}$ that is representable; such variety is usually denoted by $\MTL$, since it is the equivalent algebraic semantics of Esteva-Godo's {\em Monoidal t-norm  based logic} \cite{EstevaGodo2001}.

If a variety satisfies the divisibility condition (div) then the lattice ordering becomes the inverse divisibility ordering: for any algebra $\alg A$ therein and for all $a,b \in A$:
$$
a \le b \qquad\text{if and only if}\qquad \text{there is $ c \in A$ with $a =bc$}.
$$
In this case, it can be shown that the meet is definable as $a\meet b = a(a \imp b)$.
We will call $\mathsf{FL_{ew}}$-algebras satisfying (div) {\em hoop algebras} or {\em $\mathsf{HL}$-algebras}, and we denote the variety they form by $\mathsf{HL}$.

If an algebra in $\mathsf{FL_{ew}}$ satisfies both (prel) and (div) then it is called a $\BL$-algebra and the variety of all $\BL$-algebras is denoted by $\BL$. Again the name comes from logic: the variety of $\BL$-algebras is the equivalent algebraic semantics of {\em H\'ajek's Basic Logic} \cite{Ha98}. A systematic investigation of varieties of $\BL$-algebras started with \cite{AglianoMontagna2003} and it is still ongoing (see \cite{Agliano2017c} and the bibliography therein).

It follows from the definition that given a variety of bounded commutative integral residuated lattices, the class of its {\em $0$-free subreducts} is a class of residuated lattices; we have a very general result.

\begin{lemma} Let $\vv V$ be any subvariety of $\mathsf{FL_{ew}}$; then the class $\SU^0(\vv V)$ of the zero-free subreducts of algebras in $\vv V$ is a variety. Moreover if $\alg A \in \SU^0(\vv V)$ is bounded (i.e. there is a minimum in the ordering), then it is polynomially equivalent to an algebra in $\vv V$.
\end{lemma}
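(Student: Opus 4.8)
The plan is to prove the two assertions separately: the first from Birkhoff's $\HSP$ theorem, the second via a nucleus construction. For the first assertion I would check that $\SU^0(\vv V)$ is closed under $\SU$, $\PP$ and $\HH$. Closure under $\SU$ and $\PP$ is immediate: a subalgebra of a subalgebra of a $0$-free reduct is again such a subreduct, and if $\alg A_i\sse\alg B_i^-$ with $\alg B_i\in\vv V$ then $\prod_i\alg A_i\sse(\prod_i\alg B_i)^-$ with $\prod_i\alg B_i\in\vv V$. The whole weight therefore falls on closure under $\HH$, and this is the step I would treat with care.

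So suppose $\alg A\sse\alg B^-$ with $\alg B\in\vv V$ and let $\th\in\Con{A}$ be a congruence of the reduct. First I would pass from $\th$ to its congruence filter $F=1/\th$, recalling that for commutative integral residuated lattices congruences correspond to filters via $x\mathrel{\th}y\iff x\imp y,\,y\imp x\in F$. Since $F$ is an up-set closed under $\cdot$, and since $\join,\meet,\cdot,\imp$ and the order are inherited from $\alg B$, the set $G={\upa}_{\alg B}F=\op{Fg}^{\alg B}(F)$ is a congruence filter of $\alg B$, and the associated $\th_G\in\Con{B}$ gives $\alg B/\th_G\in\vv V$. The key computation is that the composite $0$-free homomorphism $\alg A\hookrightarrow\alg B^-\twoheadrightarrow(\alg B/\th_G)^-$ has kernel exactly $\th$: it clearly contains $\th$, and conversely if $x,y\in A$ and $x\imp y\in G$ then $x\imp y\ge_{\alg B}f$ for some $f\in F$, whence $x\imp y\ge_{\alg A}f$ and so $x\imp y\in F$ by up-closure (symmetrically for $y\imp x$). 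Thus $\alg A/\th$ embeds as a $0$-free subreduct of $\alg B/\th_G\in\vv V$, so $\alg A/\th\in\SU^0(\vv V)$; closure under $\HH$ follows and Birkhoff's theorem yields that $\SU^0(\vv V)$ is a variety.

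For the second assertion let $\alg A\in\SU^0(\vv V)$ be bounded with minimum $m$, say $\alg A\sse\alg B^-$ with $\alg B\in\vv V$. I would first record that integrality forces $m$ to be idempotent and absorbing inside $\alg A$: for $a\in A$ one has $ma\le m1=m$ and $ma\in A$, so $m\le ma\le m$ and $ma=m$; in particular $m$ is idempotent in $\alg B$. The enriched algebra $\alg A'=\la A,\join,\meet,\cdot,\imp,m,1\ra$ is then a bounded commutative integral residuated lattice, and since $m$ is a (constant) polynomial of $\alg A$ it is polynomially equivalent to $\alg A$; the task reduces to placing $\alg A'$ in $\vv V$. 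The construction I would use is the map $\g_m\colon\alg B\to\alg B$, $\g_m(x)=x\join m$: using that $\cdot$ distributes over $\join$ and that $xm\le m$, one checks $\g_m$ is a nucleus, so its image is a residuated lattice on the interval $[m,1]$, which—because $m$ is idempotent—carries the inherited product and, with bottom $m$, becomes a bounded algebra $\alg B_{\g_m}$. One then verifies that $\alg A'$ is a subalgebra of $\alg B_{\g_m}$: the operations agree on $A$, the product not being truncated there because $ab\ge m$ for $a,b\in A$.

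The hard part will be to show that $\alg B_{\g_m}$ actually lies in $\vv V$. This is delicate precisely because $\g_m$ is in general \emph{not} a homomorphism: it need not commute with $\imp$, nor with $\meet$ once the lattice reduct is non-distributive, so membership cannot be read off formally and must instead be extracted from the behaviour of intervals above an idempotent element. This is the step where I expect the genuine work to lie, and where the specific structure of (bounded) commutative integral residuated lattices has to be exploited.
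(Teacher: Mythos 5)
Your proof of the first claim is correct and is, in substance, exactly the argument the paper points to: the paper does not prove it but cites Proposition 1.10 of \cite{AFM}, remarking that the proof ``uses only the description of the congruence filters'', and that description is precisely what drives your $\HH$-closure step (replace $\th$ by $F=1/\th$, observe that ${\upa}_{\alg B}F$ is already a deductive filter of $\alg B$ because $F$ is closed under the inherited product, and check that the kernel of $\alg A\hookrightarrow\alg B^-\twoheadrightarrow(\alg B/\th_G)^-$ is exactly $\th$). Closure under $\SU$ and $\PP$ is immediate, as you say, so this half is fine.

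The genuine gap is the second claim, and you should know that the step you leave open --- showing $\alg B_{\g_m}\in\vv V$, or even just $\alg A'=\la A,\join,\meet,\cdot,\imp,m,1\ra\in\vv V$ --- is not merely the hard part: it is false for a general subvariety $\vv V$ of $\mathsf{FL_{ew}}$, so no refinement of the nucleus construction can close your argument. Take $\vv V$ to be the Stonean subvariety of $\mathsf{FL_{ew}}$, axiomatized by $\neg x\join\neg\neg x\app 1$, and let $\alg W$ be the zero-free reduct of the three-element MV-chain $\alg{\L}_3$, with universe $\{m,c,1\}$, $c^2=m$, $c\imp m=c$. The ordinal sum $\alg B=\alg 2\oplus\alg W$ satisfies the Stone identity (every element of $\alg B$ above $0$ has $\neg x=0$ and $\neg\neg x=1$), so $\alg B\in\vv V$, and $\{m,c,1\}$ is a bounded subalgebra of its zero-free reduct, hence a bounded member of $\SU^0(\vv V)$; but its expansion by the constant $m$ is $\alg{\L}_3$, which fails the Stone identity since $\neg c\join\neg\neg c=c$. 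No other member of $\vv V$ can serve either: a polynomially equivalent algebra lives on the same three-element universe, the only three-element algebra in $\vv V$ is the G\"odel chain $\alg 2\oplus\alg 2$ (a three-element $\mathsf{FL_{ew}}$-algebra is either that chain or $\alg{\L}_3$), and the term operation $x\cdot x$ of $\alg W$ does not preserve the congruence of the G\"odel chain collapsing its two top elements, so the two are not polynomially equivalent. The reading under which the paper's claim really is ``almost trivial'' --- and the one covering every instance the paper actually uses ($\mathsf{FL_{ew}}$ itself, $\MTL$, $\mathsf{HL}$, $\BL$, $\mathsf{HA}$) --- is that $\vv V$ is axiomatized relative to $\mathsf{FL_{ew}}$ by zero-free identities: in that case $\alg A'$ is a bounded $\mathsf{CIRL}$ satisfying those same identities, hence an algebra of $\vv V$, and neither the interval $[m,1]$ of $\alg B$ nor the nucleus $\g_m$ is needed. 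Your write-up should either prove the claim under that hypothesis or flag its failure in general; as it stands it promises a step that cannot be supplied.
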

\begin{proof}The proof of the first claim is in Proposition 1.10 of \cite{AFM}; it is stated for varieties of $\BL$-algebras but it uses only the description of the congruence filters, that can be used in any subvariety of $\mathsf{FL_{ew}}$ (as the reader can easily check). The second claim is almost trivial and the proof is left to the reader.
\end{proof}
In particular, we point out the following relevant examples of varieties of zero-free subreducts:
\begin{enumerate}
\ib $\SU^0(\mathsf{FL_{ew}}) = \mathsf{CIRL}$.
\ib $\SU^0(\MTL) = \mathsf{CIRRL}$, the variety of commutative integral representable residuated lattices, sometimes also called $\GMTL$.
\ib $\SU^0(\mathsf{HL})$ is the variety of commutative and integral residuated lattices satisfying divisibility (corresponding to commutative and integral $\mathsf{GBL}$-algebras). Notice that these algebras have been called {\em hoops} in several papers, but the original definition of hoop is different (see \cite{BlokFerr2000}): a hoop is the variety of $\{\meet,\cdot,1\}$-subreducts of $\mathsf{HL}$-algebras (no join!). It is not true that all hoops have a lattice reduct, and
the ones that do are exactly the $\lor$-free reducts of commutative and integral $\mathsf{GBL}$-algebras, and we will refer to them as \emph{full hoops}.
\ib $\SU^0(\BL) = \BH$, the variety of {\em basic hoops} \cite{AFM}.
Basic hoops are full hoops, since
the prelinearity equation makes the join definable using $\meet$ and $\imp$ (see for instance \cite{Agliano2018c}):
\begin{equation}
((x \imp y) \imp y) \meet ((y \imp x) \imp x) \app x \join y. \tag{PJ}
\end{equation}
\end{enumerate}

With respect to the structure theory, congruences of algebras in $\mathsf{FL_{ew}}$ are very well behaved; as a matter of fact any subvariety of $\mathsf{FL_{ew}}$ is {\em ideal determined} w.r.t $1$, in the sense of \cite{GummUrsini1984} (but see also \cite{OSV2}). In particular this implies that congruences are totally determined by their $1$-blocks. If $\alg A \in \mathsf{FL_{ew}}$ the $1$-block of a congruence of $\alg A$ is called a {\em deductive filter} (or {\em filter} for short); it can be shown that a filter of $\alg A$ is an order filter containing $1$ and closed under multiplication. Filters form an algebraic lattice isomorphic with the congruence lattice of $\alg A$ and if $X \sse A$ then the filter generated by $X$ is
$$
\op{Fil}_\alg A(X) = \{a \in A: x_1\cdot \ldots \cdot x_n \le a, \text{for some $n \in \mathbb N$ and $\vuc xn \in X$}\}.
$$
The isomorphism between the filter lattice and the congruence lattice is given by the maps:
\begin{align*}
& \th \longmapsto 1/\th\\
& F \longmapsto \th_F = \{(a,b): a \imp b,\ b \imp a \in F\},
\end{align*}
where $\th$ is a congruence and $F$ a filter.

\subsection{Ordinal sums}\label{section:ordinalsums}
A powerful  tool for investigating (bounded) integral residuated lattices in general, and $\MTL$-algebras in particular, is the {\em ordinal sum} construction. Let $\alg A_0,\alg A_1 \in \mathsf{CIRL}$ such that $\alg A_0 \cap \alg A_1 = \{1\}$, and consider $A_0 \cup A_1$. The ordering intuitively stacks $A_{1}$ on top of $A_{0} \setminus \{1\}$ and more precisely it is given by
$$
a \le b \quad\text{if and only if} \quad \left\{
             \begin{array}{l}
               \hbox{$b=1$, or} \\
               \hbox{$a \in A_0\setminus\{1\}$ and $b \in A_1\setminus\{1\}$ or} \\
               \hbox{$a,b \in A_i\setminus\{1\}$ and $a \le_{A_i} b$, $i=0,1$.}
             \end{array}
           \right.
$$
Moreover, we define the product inside of the two components to be the original one, and between the two different components to be the meet, and consequently we have the following:
\begin{align*}
&a\cdot b = \left\{
              \begin{array}{ll}
                a, & \hbox{if $a \in A_0\setminus\{1\}$ and $b \in A_1$;} \\
                b, & \hbox{if $a \in A_1$ and $b\in A_0\setminus\{1\}$;}\\
                a  \cdot_{A_i} b, &\hbox{if $a,b \in A_i$, $i=0,1$.}
              \end{array}
            \right.\\
&a \imp b = \left\{
              \begin{array}{ll}
                b, &\hbox{if $a \in A_1$ and $b\in A_0\setminus\{1\}$;} \\
                1, &\hbox{if $a \in A_0\setminus\{1\}$ and $b \in A_1$;} \\
                a \imp_{A_i} b, &\hbox{if $a,b \in A_i$, $i=0,1$.}
              \end{array}
            \right.
\end{align*}
If we call $\alg A_0 \oplus \alg A_1$ the resulting structure, it is easily checked that $\alg A_0 \oplus \alg A_1$ is a semilattice ordered integral and commutative residuated monoid. However, it might not be a residuated lattice, and the reason is that if $1_{A_0}$ is not join irreducible, and $\alg A_1$ is not bounded, we run into trouble. In fact, if $a,b \in A_0\setminus\{1\}$ and $a \join_{A_0} b =1_{A_0}$ then  the upper bounds of $\{a,b\}$ all lie in $A_1$; and since $A_1$ is not bounded there is no least upper bound of $\{a,b\}$ in $\alg A_0 \oplus \alg A_1$, thus the ordering is not a lattice ordering.
However, if $1_{A_0}$ is join irreducible, then the problem disappears, and we can define $\alg A_0 \oplus \alg A_1$ as before. While if $1_{A_0}$ is not join irreducible but $\alg A_1$ is bounded, say by $u$, then we can define:
$$
a \join b = \left\{
              \begin{array}{ll}
                a, & \hbox{$a \in A_1$ and $b \in A_0$;} \\
                b, & \hbox{$a \in A_0$ and $b \in A_1$;} \\
                a \join_{A_1} b, & \hbox{if $a,b \in A_1$;} \\
                a \join_{A_0} b, & \hbox{if $a,b \in A_0$ and $a \join_{A_0} b < 1$;}\\
                u, & \hbox{if $a,b \in A_0$ and $a \join_{A_0} b = 1$.}\\
              \end{array}
            \right.
$$
We will therefore call $\alg A_0 \oplus \alg A_1$ the {\em ordinal sum} of $\alg A_0$ and $\alg A_1$, and we will say that the ordinal sum {\em exists} if $\alg A_0 \oplus \alg A_1 \in \mathsf{CIRL}$. If $\alg A_0 \in \mathsf{FL_{ew}}$ and the ordinal sum of $\alg A_0$ and $\alg A_1$ exists, then $\alg A_0 \oplus \alg A_1 \in \mathsf{FL_{ew}}$.

Every time we deal with a class of $\mathsf{CIRL}$ for which we know that the ordinal sum always exists, we can define the ordinal sum of a (possibly infinite) family of algebras in that class; in that case the family is indexed by a totally ordered set $\la I,\le\ra$ that may or may not have a minimum (but it does in case we are in $\mathsf{FL_{ew}}$).

For an extensive treatment of ordinal sums, even in more general cases, we direct the reader to \cite{Agliano2018a,Galatos2005,GJKO}.
Here we would like to point out some facts that will be useful in the following sections.

We call an algebra in $\mathsf{CIRL}$ {\em sum irreducible} if it cannot be written as the ordinal sum of at least two nontrivial algebras in $\mathsf{CIRL}$.
Then, by a straightforward application of Zorn's lemma (see for instance Theorem 3.2 in \cite{Agliano2018b}), we obtain the following.
\begin{proposition}
	Every algebra in $\mathsf{CIRL}$ is the ordinal sum of sum irreducible algebras in $\mathsf{CIRL}$.
\end{proposition}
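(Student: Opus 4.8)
The plan is to organize all ordinal sum decompositions of a fixed $\alg A \in \mathsf{CIRL}$ into a poset ordered by refinement and then extract a finest one via Zorn's lemma. Concretely, I would encode a decomposition $\alg A = \bigoplus_{i \in I} \alg A_i$ as a partition $\mathcal P$ of $A \setminus \{1\}$ into blocks $\{B_i\}_{i\in I}$ such that: (i) there is a total order on $I$ for which $i < j$ forces every element of $B_i$ to lie below every element of $B_j$, and each block is convex; (ii) each $B_i \cup \{1\}$ is the universe of a subalgebra $\alg A_i$ of $\alg A$; and (iii) the ordinal sum formulas of Section~\ref{section:ordinalsums} applied to the family $\{\alg A_i\}_{i \in I}$ recover the operations of $\alg A$. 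The trivial partition with the single block $A \setminus \{1\}$ shows the poset is nonempty, and I say that $\mathcal P'$ \emph{refines} $\mathcal P$ when every block of $\mathcal P$ is a union of (consecutive) blocks of $\mathcal P'$.

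Given a chain $\{\mathcal P_\lambda\}$ in this poset, its candidate upper bound is the common refinement $\mathcal P^*$ whose blocks are the nonempty intersections $B^*(a) = \bigcap_\lambda B_\lambda(a)$, where $B_\lambda(a)$ denotes the block of $a$ in $\mathcal P_\lambda$. I would then verify that $\mathcal P^*$ is again a legitimate decomposition: convexity of each $B^*(a)$ follows since it is an intersection of convex sets; each $B^*(a) \cup \{1\}$ is a subalgebra, being the intersection of the chain of subalgebras $B_\lambda(a) \cup \{1\}$; and the cross-block conditions in (iii) hold because any two elements lying in distinct blocks of $\mathcal P^*$ already lie in distinct blocks of some $\mathcal P_\lambda$, where for $a < b$ the required identities $a \cdot b = a$, $a \imp b = 1$ and $b \imp a = a$ are statements about $\alg A$ alone and hence persist. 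This exhibits $\mathcal P^*$ as an upper bound of the chain.

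By Zorn's lemma the poset then has a maximal element, yielding $\alg A = \bigoplus_{i\in I}\alg A_i$, and it remains to see that each $\alg A_i$ is sum irreducible. If some $\alg A_i$ split nontrivially as $\alg B_0 \oplus \alg B_1$, then replacing the block $B_i$ by the two blocks arising from $\alg B_0$ and $\alg B_1$ would produce a strictly finer valid decomposition, contradicting maximality; here I use that any decomposition of $\alg A_i$ into at least two nontrivial summands yields, by grouping the index set at any cut, a decomposition into exactly two nontrivial summands.

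The main obstacle is the verification in the second step that the limit $\mathcal P^*$ of a chain is a genuine ordinal sum decomposition and not merely a partition: the delicate points are that intersecting an infinite descending chain of summands still produces a subalgebra, and that the cross-component identities survive passage to the limit. Once this is secured the remaining steps are routine, and the conclusion agrees with Theorem~3.2 of \cite{Agliano2018b}.
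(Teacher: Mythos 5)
Your overall strategy---Zorn's lemma applied to decompositions ordered by refinement---is exactly the ``straightforward application of Zorn's lemma'' that the paper has in mind (it simply cites Theorem 3.2 of \cite{Agliano2018b}), but your condition (ii) introduces a genuine error: the components of an ordinal sum are in general \emph{not} subalgebras of the sum. The culprit is the join. If $1$ is join-reducible in a non-topmost component $\alg A_i$ and the part above it is bounded by $u$, the ordinal sum construction redefines $a \join b := u$ for those $a,b \in A_i$ with $a \join_{A_i} b = 1$; consequently the block $B_i$ is not closed under the join of the ambient algebra. So requiring $B_i \cup \{1\}$ to be a subalgebra of $\alg A$ excludes legitimate decompositions from your poset, and maximality in this smaller poset no longer implies sum irreducibility of the blocks.

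The paper's own Example \ref{ex:notsum} defeats your proof as written. Take $\alg A = \alg 4 \oplus \alg 2$ (Figure \ref{figure:failprel}), where $a \join_{\alg A} b = 0_2$. Since $a$ and $b$ are incomparable they must lie in the same block, and one checks that every nontrivial convex partition has a block that is not closed under $\join$ or $\meet$: the block $\{0,a,b,1\}$ misses $a \join b = 0_2$, while $\{a,b,0_2,1\}$ and $\{a,b,1\}$ miss $a \meet b = 0$. Hence the only element of your poset is the trivial partition; it is maximal, yet its single component $\alg 4 \oplus \alg 2$ is manifestly sum-reducible. Your closing step fails precisely here: replacing the single block by the two blocks $\{0,a,b\}$ and $\{0_2\}$ does not produce an element of your poset, so no contradiction with maximality arises. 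The repair is to drop (ii) and define each component on $B_i \cup \{1\}$ by restricting $\cdot$, $\imp$ and $\meet$ (under which convex blocks are automatically closed) and \emph{correcting} the join, setting $a \join_{A_i} b := 1$ whenever $a \join_{\alg A} b \notin B_i$; convexity guarantees this is the least upper bound inside the block, exactly as in Lemma \ref{node} for Heyting algebras. With blocks understood this way, your chain argument (which is fine, and was not the delicate point you feared) and the maximality argument both go through, and this is the version carried out in the result the paper cites.
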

In general, we do not know what the sum irreducible algebras in a subvariety of $\mathsf{CIRL}$ may be. A most recognized result is the classification of all totally ordered sum irreducible $\BL$-algebras and basic hoops in terms of Wajsberg hoops \cite{AglianoMontagna2003}.

It is possible to describe the behavior of the classical operators $\HH,\PP,\II,\PP_u$ (denoting respectively homomorphic images, direct products, isomorphic images and ultraproducts) on ordinal sums; the reader can consult Section 3 of \cite{AglianoMontagna2003} and  Lemma 3.1 in \cite{Agliano2018a}; subalgebras in the bounded case are slightly more critical since there is the constant $0$ which must be treated carefully. If we have a family $(\alg A_i)_{i \in  I}$ of algebras in $\mathsf{CIRL}$ then we can describe subalgebras in a clear way:
$$
\SU (\bigoplus_{i \in I} \alg A_i) = \{\bigoplus_{i \in I} \alg B_i: \alg B_i \in \SU(\alg A)\}.
$$
However, if we consider sums in $\mathsf{FL_{ew}}$, the constant $0$ is represented by the $0$ of the lowermost component and so the above expression is equivocal.
Therefore, when we write $\bigoplus_{i \in I} \alg A_i$ we will always regard the algebras $\alg A_i$, $i >0$ as algebras in $\mathsf{CIRL}$, i.e. their zero-free reducts.

At present, we do not have a characterization of which subvarieties of $\mathsf{FL_{ew}}$ are closed under ordinal sums, nor which equations are preserved by ordinal sums, however the following can be easily shown.
\begin{lemma}\label{lemma:ordsumpres}
	The following are preserved by the ordinal sum construction:
\begin{enumerate}
\item all join-free equations in one variable,
\item the divisibility equation (div).
\end{enumerate}	
\end{lemma}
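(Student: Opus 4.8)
The plan is to reduce the whole lemma to a single structural observation about how one-variable join-free terms evaluate in an ordinal sum, and to dispatch the two remaining items by short separate arguments. For item (1) I would simply invoke the construction itself: the paragraph preceding the statement already records that whenever the ordinal sum of $\alg A_0\in\mathsf{FL_{ew}}$ with an upper component exists it lies again in $\mathsf{FL_{ew}}$ (one checks the residuated-lattice axioms directly from the displayed definitions of $\le$, $\cdot$, $\imp$ and $\join$). Since an equation in the theory of $\mathsf{FL_{ew}}$ is by definition one valid in every $\mathsf{FL_{ew}}$-algebra, it holds in particular in $\alg A_0\oplus\alg A_1$, so (1) follows with no further work.

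The heart of the matter is the following claim, which I would prove by induction on term complexity: if $t(x)$ is a one-variable term not involving $\join$ (that is, a term over $\cdot,\imp,\meet,1$) and $c$ is any element of $\alg A_0\oplus\alg A_1$ lying in the component $A_i$ (recall $1\in A_0\cap A_1$), then $t^{\alg A_0\oplus\alg A_1}(c)=t^{\alg A_i}(c)\in A_i$. The base cases $t=x$ and $t=1$ are immediate. For the inductive step, by the induction hypothesis every subterm of $t$ evaluates to an element of $A_i$ — this is exactly where the restriction to one variable is used, since all leaves carry the same value $c$ and so never straddle the two components — and it remains to see that each of $\cdot,\imp,\meet$ restricts on $A_i$ to the corresponding operation of $\alg A_i$. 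For $\cdot$ and $\imp$ this is read off directly from the ``$a,b\in A_i$'' clauses of the definitions; for $\meet$ one checks that the greatest lower bound in the sum of two elements of $A_i$ already lies in $A_i$ (for $i=0$ because no element of $A_1\setminus\{1\}$ lies below an element of $A_0\setminus\{1\}$, and for $i=1$ because when both elements lie in $A_1\setminus\{1\}$ their $\alg A_1$-meet again lies in $A_1\setminus\{1\}$ and hence dominates every element of $A_0\setminus\{1\}$, the cases involving $1$ being trivial). Granting the claim, item (2) is immediate: given a one-variable join-free identity $s\app t$ holding in both $\alg A_0$ and $\alg A_1$, every $c$ in the sum belongs to some $A_i$, whence $s^{\alg A_0\oplus\alg A_1}(c)=s^{\alg A_i}(c)=t^{\alg A_i}(c)=t^{\alg A_0\oplus\alg A_1}(c)$.

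For item (3) the term $x(x\imp y)$ has two variables, so the claim above does not apply and I would argue by cases on the components of $a,b$. If $a,b$ lie in the same $A_i$ (in particular if one of them is $1$) then, exactly as above, both sides are computed entirely inside $A_i$ and agree because $\alg A_i$ satisfies (div). If $a$ and $b$ lie in different components, say $a\in A_0\setminus\{1\}$ and $b\in A_1\setminus\{1\}$, then the displayed definitions give $a\imp b=1$ and $b\imp a=a$, so that $a(a\imp b)=a$ and $b(b\imp a)=b\cdot a=a$; both sides equal the lower element $a$. The remaining mixed case is symmetric, since (div) is invariant under exchanging $x$ and $y$. Thus (div) holds throughout the sum, and the same inductions and case analyses go through verbatim for an ordinal sum $\bigoplus_{i\in I}\alg A_i$ of an arbitrary totally ordered family.

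The step most in need of care is the $\meet$ clause in the proof of the main claim: one must genuinely verify that passing to the ordinal-sum order does not create new lower bounds that would move the meet of two same-component elements out of that component. This is the only place where the order interaction between the two components enters, and it is precisely the point where the argument would break for terms containing the extra constant $0$ (which lives in the bottom component and has no counterpart in the $0$-free upper components), explaining why items (2) and (3) are naturally confined to the $\join$-free fragment over $\cdot,\imp,\meet,1$.
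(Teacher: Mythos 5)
Your proposal is correct. Note that the paper itself offers no proof of this lemma (it is introduced with ``the following can be easily shown''), so there is nothing to diverge from: your argument supplies exactly the routine verification the authors leave implicit, namely that join-free one-variable terms over $\{\cdot,\imp,\meet,1\}$ are evaluated entirely inside the component of their argument, plus a direct case analysis for (div) in which the cross-component cases collapse to the lower element on both sides. You also correctly isolate the one genuinely non-trivial point -- that $\meet$ in the sum is the greatest lower bound of the sum order, so one must check it does not leave the component -- and your closing remark about why the constant $0$ must be excluded (e.g.\ $\neg\neg x \app x$ holds in $\alg 2$ but fails in $\alg 2 \oplus \alg 2$) is a useful observation that the paper's statement leaves tacit.
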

We have the following.
\begin{proposition}\label{prop:ordsumexists}
	Ordinal sums always exist in:
	\begin{enumerate}
	\item $\mathsf{FL_{ew}}$,
	\item $\mathsf{HL}$,
	\item the class of finite algebras in
$\mathsf{CIRL}$,
\item the class of totally ordered algebras in $\mathsf{CIRL}$.
\end{enumerate}
\end{proposition}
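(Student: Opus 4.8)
The plan is to reduce all four cases to the single obstruction already isolated in the description of the construction above. Recall that for any $\alg A_0,\alg A_1 \in \mathsf{CIRL}$ the structure $\alg A_0 \oplus \alg A_1$ is always a commutative, integral, residuated, semilattice-ordered monoid with greatest element $1$; moreover \emph{meets} always exist, since two elements lying in the same component inherit their meet, while any element of $A_0\setminus\{1\}$ lies below every element of $A_1\setminus\{1\}$, so the greatest lower bound of two elements is never pushed out of the structure. Hence the \emph{only} way $\alg A_0 \oplus \alg A_1$ can fail to be a residuated lattice is the failure of some join to exist, and as recalled above this happens precisely when $1_{A_0}$ is not join irreducible \emph{and} $\alg A_1$ has no least element. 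So in each case it suffices to rule out this single configuration and then to check that the resulting algebra stays inside the class under consideration.

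For (1), (2) and (3) I would argue by boundedness of the top summand. Every algebra in $\mathsf{FL_{ew}}$, hence every algebra in $\mathsf{HL}\sse\mathsf{FL_{ew}}$, has a least element $0$, and this element remains the minimum of the order after we pass to the zero-free reduct used for the non-bottom summands; likewise any finite algebra in $\mathsf{CIRL}$ has a least element, being a finite lattice. Thus in these three cases $\alg A_1$ is always bounded, the obstruction cannot occur, and $\alg A_0 \oplus \alg A_1$ is a genuine residuated lattice. It then remains to confirm closure. For (1) the observation made just before the statement gives $\alg A_0 \oplus \alg A_1 \in \mathsf{FL_{ew}}$ as soon as $\alg A_0 \in \mathsf{FL_{ew}}$ and the sum exists; for (2) we additionally invoke Lemma \ref{lemma:ordsumpres}(3), by which divisibility is preserved, so the sum lands in $\mathsf{HL}$; for (3) the underlying set is the finite union $A_0 \cup A_1$, so the sum is again finite.

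For (4) the obstruction is ruled out from the other side: in a totally ordered algebra every element is join irreducible, so in particular $1_{A_0}$ is, and the sum exists for this reason regardless of whether $\alg A_1$ is bounded. Closure is immediate here, since stacking two chains yields a chain, which is trivially a lattice and hence lies in $\mathsf{CIRL}$. I expect the only delicate point to be bookkeeping rather than mathematics: one must read ``bounded'' as ``possessing a least element in the order'' (a property that survives deleting the constant $0$ from the signature), and one must take for granted the already-asserted verification that $\oplus$ yields a commutative integral residuated monoid, so that the entire argument collapses to checking the lattice condition, i.e. the existence of joins.
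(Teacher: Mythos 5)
Your proof is correct and takes essentially the same route as the paper's: the paper's (one-line) proof likewise combines Lemma \ref{lemma:ordsumpres} with the observation that $\mathsf{FL_{ew}}$-algebras (hence $\mathsf{HL}$-algebras) and finite algebras in $\mathsf{CIRL}$ are bounded, so the join-existence obstruction isolated in the construction never arises, while closure under the relevant equations follows from the preservation lemma. If anything you are slightly more thorough, since the paper's stated proof does not explicitly dispose of case (4), where boundedness is unavailable and one needs, exactly as you argue, that $1$ is join irreducible in any totally ordered algebra and that stacking chains yields a chain.
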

\begin{proof}
	The claim follows from Lemma \ref{lemma:ordsumpres}, and the fact that both $\mathsf{FL_{ew}}$-algebras and finite $\mathsf{CIRL}$s are always bounded.
\end{proof}

Hence, $\mathsf{FL_{ew}}$ is closed under ordinal sums and so is $\mathsf{HL}$. An algebra in $\mathsf{FL_{ew}}$ is called {\em $n$-potent} (for $n \in \mathbb N$) if it satisfies the equation $x^n \app x^{n-1}$; if $n=2$ we use the term {\em idempotent}.  The variety of $n$-potent $\mathsf{FL_{ew}}$-algebras is called $\mathsf{P_{n}FL_{ew}}$ (and in the literature it is sometimes shortened in $\mathsf{E_{n}}$), and we shall call $\mathsf{P_{n}HL}$ the largest $n$-potent subvarieties of  $\mathsf{HL}$; it follows from Proposition \ref{prop:ordsumexists} that they are both closed under ordinal sums.

However, in general a proper subvariety of $\mathsf{CIRL}$ or $\mathsf{FL_{ew}}$ is not closed under ordinal sums. For instance, we can easily construct an ordinal sum of two $\mathsf{MTL}$-algebras that is not in $\mathsf{MTL}$.

\begin{example}\label{ex:notsum}
	Consider the ordinal sum $\alg 4 \oplus \alg 2$ of the four-element Boolean algebra $\alg 4$ and the two-element Boolean algebra $\alg 2$ (in Figure \ref{figure:failprel}). Prelinearity fails since the join $(a \to b) \lor (b \to a)$ in the ordinal sum is redefined to be the lowest element of $\alg 2$, $0_{2}$, and thus it is not $1$. \begin{figure}[htbp]
\begin{center}
\begin{tikzpicture}
\draw (0,0) -- (-1,1) -- (0,2) -- (0,3);
\draw (0,0) -- (1,1) -- (0,2);
\draw[fill] (0,0) circle [radius=0.05];
\draw[fill] (-1,1) circle [radius=0.05];
\draw[fill] (0,2) circle [radius=0.05];
\draw[fill] (0,3) circle [radius=0.05];
\draw[fill] (1,1) circle [radius=0.05];
\node[right] at (0,0) {\footnotesize $0$};
\node[left] at (-1,1) {\footnotesize $a$};
\node[right] at (1,1) {\footnotesize $b$};
\node[right] at (0,2) {\footnotesize $0_{2}$};
\node[right] at (0,3) {\footnotesize $1$};
\end{tikzpicture}
\end{center}
\caption{$\alg 4 \oplus \alg 2$}\label{figure:failprel}
\end{figure}
\end{example}
The previous example actually shows the following.

\begin{proposition} No nontrivial subvariety of $\MTL$ is closed under ordinal sums.
\end{proposition}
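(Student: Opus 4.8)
The plan is to package Example~\ref{ex:notsum} into a general statement by observing that the two factors $\alg 4$ and $\alg 2$ used there belong to \emph{every} nontrivial subvariety of $\MTL$. Throughout I would restrict attention to nontrivial subvarieties, flagging at the outset the harmless caveat that the trivial variety is (vacuously) closed under ordinal sums but contains no nontrivial algebra, so it is the unique exception and is excluded by the intended reading.

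First I would show that $\alg 2 \in \vv V$ for every nontrivial subvariety $\vv V$ of $\MTL$. Since $0$ and $1$ are constants in the signature of $\mathsf{FL_{ew}}$, the set $\{0,1\}$ is contained in every subalgebra, and a direct check (using $0\cdot 0 = 0$, $1 \to 0 = 0$, $0 \to 1 = 1$, and the evident values of $\vee,\meet$) shows that $\{0,1\}$ is closed under all operations, hence forms a subalgebra. In a nontrivial algebra $0 \neq 1$, so this minimal subalgebra is isomorphic to $\alg 2$; as $\vv V$ contains some nontrivial algebra and varieties are closed under subalgebras, $\alg 2 \in \vv V$. Being closed under direct products, $\vv V$ then also contains $\alg 4 \cong \alg 2 \times \alg 2$.

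Next I would assemble the contradiction. By Proposition~\ref{prop:ordsumexists} the ordinal sum $\alg 4 \oplus \alg 2$ exists and lies in $\mathsf{FL_{ew}}$, since both factors are finite and $\mathsf{FL_{ew}}$ is closed under ordinal sums. If $\vv V$ were closed under ordinal sums, then from $\alg 2,\alg 4 \in \vv V$ we would obtain $\alg 4 \oplus \alg 2 \in \vv V \subseteq \MTL$. But Example~\ref{ex:notsum} shows exactly that $\alg 4 \oplus \alg 2$ violates prelinearity, the witnessing instance being $(a \to b)\vee(b \to a) = a \vee b = 0_2 \neq 1$ for the two atoms $a,b$ of $\alg 4$ (the join being redefined to the bottom $0_2$ of the upper component). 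Hence $\alg 4 \oplus \alg 2 \notin \MTL$, a contradiction, so $\vv V$ is not closed under ordinal sums.

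I do not anticipate a genuine obstacle: the argument is essentially a repackaging of the example, and the only points requiring care are the trivial-variety caveat and the verification that $\alg 2$—and therefore $\alg 4 = \alg 2 \times \alg 2$—belongs to every nontrivial subvariety, which reduces to the elementary fact that $\{0,1\}$ is the minimal subalgebra of any nontrivial $\mathsf{FL_{ew}}$-algebra.
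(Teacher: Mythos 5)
Your proof is correct and follows essentially the same route as the paper: both reduce the claim to Example~\ref{ex:notsum} together with the observation that every nontrivial subvariety of $\MTL$ contains $\alg 2$ and $\alg 4$, which the paper cites as the fact that Boolean algebras form the atom of the subvariety lattice of $\MTL$ and which you instead derive directly from the minimal subalgebra $\{0,1\}$ and closure under products. Your explicit flagging of the trivial-variety caveat is a minor point of extra care not present in the paper, but it does not change the substance of the argument.
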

\begin{proof}
The claim follows from Example \ref{ex:notsum}, and the fact that Boolean algebras are the atom in the lattice of subvarieties of $\MTL$.
\end{proof}

However it can be easily checked that any (possibly infinite) ordinal sum of totally ordered $\MTL$-algebras is a totally ordered $\MTL$-algebra, and any (possibly infinite) ordinal sum of totally ordered $\BL$-algebras is a totally ordered $\BL$-algebra (since it is an ordinal sum of totally ordered Wajsberg hoops \cite{AglianoMontagna2003}).
We conclude the preliminaries with the following observation.
\begin{lemma}\label{closed}  For a subvariety $\vv V$ of $\mathsf{FL_{ew}}$ the following are equivalent:
\begin{enumerate}
\item $\vv V$ is closed under finite ordinal sums;
\item $\vv V$ is closed under ordinal sums.
\end{enumerate}
\end{lemma}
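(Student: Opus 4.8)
The implication (2) $\Rightarrow$ (1) is immediate, since a finite ordinal sum is a special case of an ordinal sum; all the content is in (1) $\Rightarrow$ (2). The plan is to exhibit an arbitrary ordinal sum $\bigoplus_{i \in I}\alg A_i$ of algebras of $\vv V$ as a directed union of subalgebras that are \emph{finite} ordinal sums of algebras of $\vv V$, and then to transfer membership in $\vv V$ along that union.

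First I would recall two standard facts of universal algebra: every algebra is the directed union of its finitely generated subalgebras, and a variety is closed under directed unions. Concretely, if $p \app q$ is an identity valid throughout $\vv V$ and $a_1,\dots,a_n \in \bigoplus_{i\in I}\alg A_i$, then $a_1,\dots,a_n$ already lie in a single finitely generated subalgebra $\alg C$; knowing $\alg C \in \vv V$ forces $p^{\bigoplus}(a_1,\dots,a_n) = q^{\bigoplus}(a_1,\dots,a_n)$, and since this holds for every defining identity of $\vv V$ we conclude $\bigoplus_{i\in I}\alg A_i \in \vv V$. So the whole problem reduces to showing that each finitely generated subalgebra $\alg C$ of $\bigoplus_{i\in I}\alg A_i$ is a finite ordinal sum of algebras of $\vv V$.

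For this I would use the description of subalgebras of ordinal sums recalled in the preliminaries: $\alg C$ is itself an ordinal sum $\bigoplus_{i\in I}\alg B_i$ with each $\alg B_i \in \SU(\alg A_i)\subseteq \vv V$ (the bottom component still contains $0$, since $0$ is a constant of the signature and hence lies in every subalgebra, which handles the bounded case). Because finitely many generators meet only finitely many components, all but finitely many $\alg B_i$ are trivial, so $\alg C$ is a finite ordinal sum of algebras of $\vv V$ and therefore lies in $\vv V$ by hypothesis (1).

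The delicate step, and the one I expect to be the main obstacle, is the precise structure of these subalgebras, because this is exactly where the redefined join of the ordinal sum must be watched. A naive finite sub-sum built from a few full components $\alg A_i$ need \emph{not} embed into $\bigoplus_{i\in I}\alg A_i$: if the unit of a component $\alg A_i$ is the join of two strictly smaller elements $a,b$, then $a\join b$ is computed inside the sum as the bottom $\bot$ of the next component up, not as $1$. Hence a genuine finitely generated subalgebra must, at such a component, carry a copy of $\{\bot,1\}\cong\alg 2$ as its successor component, a \emph{cap}; here $\bot$ is idempotent because it is the minimum of its component, so $\{\bot,1\}$ is indeed a subalgebra and belongs to $\vv V$. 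Verifying that closing a finite set of elements under the operations introduces only such caps, so that one stays within finitely many nontrivial components and the ordinal-sum structure is faithfully inherited, is the point requiring care, after which the directed-union argument closes the proof.
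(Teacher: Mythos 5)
Your overall strategy is sound, and it is essentially the paper's argument run in the opposite direction: the paper argues contrapositively that a $\vv V$-identity failing in $\bigoplus_{i\in I}\alg A_i$ fails at finitely many elements, hence inside a sum of finitely many components, while you reduce membership in $\vv V$ to finitely generated subalgebras; both reductions rest on the finitariness of identities. Moreover, your ``cap'' observation is not pedantry: the paper's own proof takes the finite ordinal sum of the \emph{full} components containing the witnesses and asserts without comment that it is a subalgebra of $\bigoplus_{i\in I}\alg A_i$, which is exactly the naive claim you reject (if $1$ is the join of two strictly smaller elements of a selected component and the immediately following component of the big sum is skipped, the union of the selected components is not closed under the redefined join). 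Adding two-element caps is precisely what repairs that step.

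However, your proof has a genuine gap of its own, at the assertion that $\alg C=\bigoplus_{i\in I}\alg B_i$ with each $\alg B_i\in\SU(\alg A_i)\sse\vv V$. The components $\alg A_i$ with $i>0$ enter the sum only through their zero-free reducts, so the trace of $\alg C$ on such a component is merely a subalgebra in the signature $\{\join,\meet,\cdot,\imp,1\}$: it need not contain the least element of $\alg A_i$, need not be bounded at all, and hence need not be a member of $\vv V$, nor the reduct of one. Concretely, take $\vv V=\mathsf{HL}$ (which does satisfy hypothesis (1)), $\alg A_0=\alg 2$, and $\alg A_1$ Chang's MV-algebra; the subalgebra of $\alg A_0\oplus\alg A_1$ generated by the coatom $c$ of $\alg A_1$ is $\alg 2\oplus\alg W$, where $W=\{c^n:n\ge 0\}$ is an unbounded cancellative hoop. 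This finitely generated subalgebra admits no decomposition as a finite ordinal sum of members of $\vv V$ (both $\alg 2$ and $\alg W$ are sum irreducible, and $\alg W$ is not the reduct of any $\mathsf{FL_{ew}}$-algebra), so hypothesis (1) simply does not apply to it and your last step collapses. The repair is to stop decomposing $\alg C$ itself: for each finite set $S$ of indices containing the least one, let $\alg D_S$ consist of the full components $\alg A_i$, $i\in S$, together with your $\alg 2$-caps above each $i\in S$ whose immediate successor in $I$ lies outside $S$. Each $\alg D_S$ is a subalgebra of $\bigoplus_{i\in I}\alg A_i$ and is a finite ordinal sum of members of $\vv V$ (the caps being copies of $\alg 2\in\vv V$), hence lies in $\vv V$ by (1); the $\alg D_S$ form a directed family whose union is the whole sum, and every finitely generated subalgebra, including your $\alg C$, embeds in some $\alg D_S$. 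With this modification your directed-union argument closes the proof.
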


\begin{proof} Clearly 2 implies 1.
Suppose now that $\vv V$ is not closed under ordinal sums. Thus, there is a family $(\alg A_i)_{i \in I}$ such that $\alg A_i \in \vv V$ for all $i \in I$ but
$\alg A = \bigoplus_{i \in I} \alg A_i \notin \vv V$. Then it must exists an equation $p(\vuc xn) \app 1$ holding in $\vv V$ and elements $\vuc an \in \bigoplus_{i \in I} \alg A_i$ such that $p(\vuc an) \ne 1$. Let $\alg A_{i_0},\dots,\alg A_{i_k}$ be an enumeration of all the components of the ordinal sum that contain at least one of the $\vuc an$. If $i_0 = 0$, then $\bigoplus_{j=0}^k \alg A_{i_j}$ is a finite ordinal sum belonging to $\vv V$ (since it is a subalgebra of $\alg A$) in which the equation fails; if $i_0\ne 1$ then $\alg A_0 \oplus \bigoplus_{j=0}^k \alg A_{i_j}$ is a finite ordinal sum with the same property. In any case $\vv V$ is not closed under finite ordinal sums and thus 1. implies 2.
\end{proof}

\section{Projectivity in $\mathsf{FL_{ew}}$}
Given Proposition \ref{prop:ordsumexists}, we will now make use of the ordinal sum construction to obtain some general results about projective algebras in $\mathsf{FL_{ew}}$.
\begin{lemma}\label{finiteprojective} Let $\vv K$ be a class of $\mathsf{FL_{ew}}$-algebras with the following properties:
\begin{enumerate}
\item There is a non-trivial algebra in $\vv K$;
\item $\vv K$ is closed under ordinal sums.
\end{enumerate}
If $\alg A \in \vv K$ is projective in $\vv K$ then $1$ is join irreducible in $\alg A$.  If $\alg A$ is also finite, then it is subdirectly irreducible.
\end{lemma}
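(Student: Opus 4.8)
The plan is to force join irreducibility down from a cleverly chosen ordinal sum via projectivity. Set $\alg B = \alg A \oplus \two$. This lies in $\vv K$, since $\two \in \vv K$, ordinal sums always exist in $\mathsf{FL_{ew}}$ (Proposition~\ref{prop:ordsumexists}), and $\vv K$ is closed under them. Concretely $\alg B$ is $\alg A$ with a single new element $u$ (the bottom of the $\two$ summand) inserted as a coatom: $u$ sits above every element of $A\setminus\{1\}$ and below $1$. The set $F=\{u,1\}$ is a deductive filter of $\alg B$ (it is an order filter containing $1$ and closed under multiplication), and I would first verify that $g\colon \alg B \to \alg B/\th_F$ realizes an isomorphism $\alg B/\th_F \cong \alg A$. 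Indeed $\th_F$ merges $u$ with $1$ and collapses nothing else, and under this identification a join $a \join b$ of elements of $A\setminus\{1\}$ that equals $u$ in $\alg B$ is sent back to $1$, so every operation of the quotient agrees with that of $\alg A$. Thus $g\colon \alg B \to \alg A$ is a surjective homomorphism in $\vv K$.

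Now apply projectivity of $\alg A$ to the diagram with $\alg C = \alg A$, the surjection $g\colon \alg B \to \alg A$, and $h=\op{id}_\alg A$: this yields a homomorphism $f\colon \alg A \to \alg B$ with $gf = \op{id}_\alg A$, so $f$ is injective, $f(1)=1$, and $\alg A$ is a retract of $\alg A \oplus \two$. The key structural fact is that $1$ is join irreducible in $\alg B = \alg A \oplus \two$: for $x,y \in B$ with $x,y\neq 1$ we have $x,y \le u$, and by the ordinal-sum join (in particular its redefinition, forced by $\two$ being bounded, whenever a join of two elements of $A$ would otherwise reach $1$) one gets $x \join y \le u < 1$; hence $x\join y = 1$ forces $x=1$ or $y=1$. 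Join irreducibility of the top is reflected along the injective $f$: if $a\join b = 1$ in $\alg A$, then $f(a)\join f(b) = f(1) = 1$, so $f(a)=1$ or $f(b)=1$, and injectivity together with $f(1)=1$ gives $a=1$ or $b=1$. Therefore $1$ is join irreducible in $\alg A$.

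For the finite case I would argue that join irreducibility of $1$ forces a \emph{unique} coatom. In a finite lattice every element below $1$ lies below some coatom, and two distinct coatoms $c_1\neq c_2$ would satisfy $c_1 \join c_2 = 1$ (the only element strictly above a coatom is $1$), contradicting join irreducibility; hence there is a single coatom $c$. Since $\mathsf{FL_{ew}}$ is ideal determined with respect to $1$, congruences of $\alg A$ correspond to deductive filters, and $\alg A$ is subdirectly irreducible precisely when it has a least nontrivial filter. Every filter $F\neq\{1\}$ contains some $a<1$, and being an order filter it then contains the coatom $c\ge a$; consequently $\op{Fil}_\alg A(c)$ is nontrivial and contained in every nontrivial filter, i.e.\ it is the monolith. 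Hence $\alg A$ is subdirectly irreducible.

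The genuinely creative step is recognizing that $\alg A\oplus\two$ is the right test algebra and that the natural collapse $g$ splits by projectivity; the essential verification is that $1$ becomes join irreducible in the ordinal sum, which rests exactly on the redefinition of the join caused by $\two$ being bounded. Everything else — the filter description of congruences and the coatom bookkeeping in the finite case — is routine.
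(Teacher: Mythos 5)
Your proposal is correct and follows essentially the same route as the paper: form $\alg A \oplus \two$, collapse the top copy of $\two$ to obtain a surjection onto $\alg A$, split it using projectivity, and reflect the join irreducibility of $1$ (which holds in the ordinal sum) along the resulting section. The only difference is cosmetic: where the paper simply cites the known equivalence, for finite $\mathsf{FL_{ew}}$-algebras, between complete join irreducibility of $1$ and subdirect irreducibility, you prove it inline via the unique coatom and the filter--congruence correspondence, a valid self-contained substitute.
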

\begin{proof}  
Let $\alg A$ be projective in $\vv K$ and $\alg B$ a non-trivial algebra in $\vv K$. Then $f\colon \alg A \oplus \alg B \longmapsto \alg A$ defined by $f(a) =a$ if $a \in A \setminus \{1\}$ and $f(d)=1$ if $d \in B$ is a surjective homomorphism. Since $\alg A$ is projective and $\alg A \oplus \alg B \in \vv K$, there is a homomorphism $g\colon \alg A \longmapsto \alg A \oplus \mathbf B$ such that $fg=id_A$. 

In order to prove that $1$ is join irreducible in $\alg A$, we show that if $a, b \in A \setminus \{1\}$, then $a \lor b \neq 1$. Let then $a, b \in A \setminus \{1\}$. Since $fg(a) = a, fg(b) = b$, by definition of $f$ we have that $g(a), g(b) \in A \setminus \{1\}$. Since $\alg B$ is non-trivial, there is $c \in B \setminus \{1\}$. Thus by definition of the order on the ordinal sum $\alg A \oplus \alg B$, $g(a) < c$ and $g(b) < c$, hence $g(a) \lor g(b) = g(a \lor b) \leq c < 1$. Since $g$ is a homomorphism, necessarily $a \lor b < 1$. Thus, if $1$ is a binary join of elements, at least one of the elements is $1$.


If $\alg A$ is finite then $1$ is completely join irreducible in $\alg A$; but this is well-known to being equivalent to subdirect irreducibility for finite members of $\mathsf{FL_{ew}}$ (see for instance \cite{GJKO}, Lemma 3.59).
\end{proof}

Since $\mathbf 2$ belongs to any non-trivial subvariety of $\mathsf{FL_{ew}}$ (indeed, it is the free algebra on the empty set of generators), given $\vv V$ a non-trivial subvariety of $\mathsf{FL_{ew}}$ that is closed under ordinal sums, the finite projective algebras in $\vv V$ must be subdirectly irreducible.

We will now highlight the connection between projective algebras and splitting algebras. Given a lattice $(L, \land, \lor)$, a {\em splitting pair} $(a,b)$ of elements of $L$ is such that $a \not\leq b$ and for any $c \in L$, either $a \leq c $ or $c \leq b$. This notion can be considered on lattices of subvarieties of a variety $\vv V$, where a splitting pair is then a pair of subvarieties of $\vv V$, $(\vv V_1, \vv V_2)$, where $\vv V_1$ is relatively axiomatized by a single equation and $\vv V_2$ is generated by a single finitely generated subdirectly irreducible algebra, called a {\em splitting algebra} (see \cite[Section 10]{GJKO} for more details).

\begin{lemma}\label{splitting} Let $\vv V$ be any variety and let $\alg A$ be an algebra that is subdirectly irreducible and projective in $\vv V$. Then
\begin{enumerate}
\item\label{splitting1} $\vv U = \{ \alg B \in \vv V: \alg A \notin \SU(\alg B) \}$ is a subvariety of $\vv V$;
\item\label{splitting2} for any subvariety $\vv W$ of $\vv V$ either $\vv W \sse \vv U$ or $\vv V(\alg A) \subseteq\vv W$.
\item\label{splitting3} there is an equation $\sigma$ in the language of $\vv V$ such that $\alg B \in \vv U$ if and only if $\alg B \vDash \sigma$;
\end{enumerate}
In other words, $(\vv U, \vv V(\alg A))$ constitutes a splitting pair in the lattice of subvarieties of $\vv V$.
\end{lemma}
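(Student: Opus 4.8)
The plan is to establish the three numbered claims in order, since each feeds naturally into the next, and to recognize at the outset that the heart of the matter is a connection due to McKenzie between projectivity and the so-called \emph{splitting} phenomenon. The key structural fact I would exploit is that $\alg A$ is projective, hence by Theorem \ref{whitman} a retract of a free algebra $\alg F = \alg F_\vv V(X)$: there are homomorphisms $f \colon \alg A \longmapsto \alg F$ and $g \colon \alg F \longmapsto \alg A$ with $gf = \op{id}_\alg A$. Equivalently, $\alg A$ embeds (via $f$) as a subalgebra of a free algebra and is a retract of it. This is the leverage that turns the abstract projectivity hypothesis into something concrete about subalgebra membership.

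First I would prove \eqref{splitting1}, that $\vv U = \{\alg B \in \vv V : \alg A \notin \SU(\alg B)\}$ is a subvariety. Since $\vv U$ is defined by the avoidance of $\alg A$ as a subalgebra, I must check closure under $\HH$, $\SU$, and $\PP$. Closure under $\SU$ is immediate: if $\alg A \notin \SU(\alg B)$ and $\alg C \in \SU(\alg B)$, then $\SU(\alg C) \sse \SU(\alg B)$, so $\alg A \notin \SU(\alg C)$. Closure under $\PP$ uses that $\alg A$ is subdirectly irreducible together with projectivity: if $\alg A$ embedded into a product $\prod_i \alg B_i$ of algebras in $\vv U$, I would compose the embedding $f$ of $\alg A$ into that product (obtained via projectivity by lifting through the projections, or more directly by the retract structure) with the coordinate projections; subdirect irreducibility of $\alg A$ forces one of these compositions to be an embedding, placing $\alg A \in \SU(\alg B_i)$ for some $i$, a contradiction. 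Closure under $\HH$ is where projectivity does its essential work: given $\alg C \in \HH(\alg B)$ with $\alg B \in \vv U$, say $h \colon \alg B \twoheadrightarrow \alg C$, if $\alg A$ embedded into $\alg C$ then projectivity of $\alg A$ lifts this embedding along the surjection $h$ to a homomorphism $\alg A \longmapsto \alg B$; because $\alg A$ is projective the lift composed with $h$ recovers the original embedding, and one checks that the lift is itself injective, so $\alg A \in \SU(\alg B)$, again a contradiction. I expect this $\HH$-closure step to be the \textbf{main obstacle}, since it is precisely where the interplay between the lifting property and injectivity of the lifted map must be argued carefully.

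Next, for \eqref{splitting2}, let $\vv W$ be any subvariety of $\vv V$. If $\alg A \in \SU(\alg B)$ for some $\alg B \in \vv W$, then $\alg A \in \vv W$ and hence $\vv V(\alg A) \sse \vv W$; otherwise no algebra of $\vv W$ contains $\alg A$ as a subalgebra, which says exactly $\vv W \sse \vv U$. This dichotomy is essentially a restatement of the definition of $\vv U$ and requires no new machinery. Finally, claim \eqref{splitting3} produces the splitting equation: taking $\alg B = \vv F_\vv V(\omega)$ arguments show $\alg A \in \SU(\alg F_\vv V(\omega))$ by projectivity, and since $\alg A$ is finitely generated its generators are the images of finitely many free generators, so $\alg A$ embeds into some finitely generated free algebra $\alg F_\vv V(n)$. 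One then uses the standard fact (valid because every subvariety of $\mathsf{FL_{ew}}$ is ideal determined, so subdirect irreducibility of the finitely generated $\alg A$ corresponds to a compact, hence principal, congruence with a monolith) that the failure of $\alg A$ being a subalgebra is captured by a single equation $\sigma$: the equation asserting that the monolith-defining element of $\alg A$ cannot be realized. Concretely, $\sigma$ is read off from the critical pair generating the monolith congruence of $\alg A$, and $\alg B \vDash \sigma$ if and only if $\alg A \notin \SU(\alg B)$. Combining \eqref{splitting1}--\eqref{splitting3} gives that $(\vv U, \vv V(\alg A))$ is a splitting pair, as the dichotomy in \eqref{splitting2} together with the equational description of $\vv U$ in \eqref{splitting3} is exactly the definition of splitting recalled before the statement.
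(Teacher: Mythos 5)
Your proofs of parts (1) and (2) are correct and essentially coincide with the paper's own argument: closure of $\vv U$ under $\SU$ is definitional, closure under $\PP$ follows from subdirect irreducibility of $\alg A$ (an embedding into a product, composed with the coordinate projections, gives congruences meeting to the identity, so one projection is already injective on the image of $\alg A$), and closure under $\HH$ follows from projectivity: the lift $\ell$ of an embedding $e$ along a surjection $h$ satisfies $h\ell = e$ and is therefore itself injective. Two small remarks: the step you flag as the main obstacle (injectivity of the lift) is in fact immediate from $h\ell = e$; and projectivity plays no role in the $\PP$-closure step, so your parenthetical about obtaining the embedding into the product ``via projectivity'' is a red herring — the embedding is the hypothesis being contradicted.

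Part (3), however, has a genuine gap. You assume $\alg A$ is finitely generated and invoke machinery specific to $\mathsf{FL_{ew}}$ (ideal determinacy, filters, a ``monolith-defining element''), but the lemma is stated for an \emph{arbitrary} variety $\vv V$ and an arbitrary subdirectly irreducible projective $\alg A$: neither finite generation nor any residuated structure is available, and the paper later exhibits infinite projective subdirectly irreducible Heyting algebras, so the finitely generated case does not exhaust the lemma's scope. Moreover, even granting those extra hypotheses, the Jankov-style construction you sketch is not justified: such equations are built from a finite presentation, and what they classically detect is membership of $\alg A$ in $\SU\HH(\alg B)$ (or $\HH\SU(\alg B)$), not in $\SU(\alg B)$ alone, so the claimed equivalence ``$\alg B \vDash \sigma$ iff $\alg A \notin \SU(\alg B)$'' is exactly what would need proof. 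The paper's route avoids all of this: once (1) and (2) are in place, $(\vv U, \vv V(\alg A))$ is a splitting pair in the lattice of subvarieties, and the general theory of splittings (McKenzie; see GJKO, Section 10) then gives that $\vv U$, being completely meet-prime in the subvariety lattice — equivalently, its equational theory is completely join-prime and is the join of the one-equation extensions of the theory of $\vv V$ it contains — is relatively axiomatized by a single equation. That purely lattice-theoretic step, requiring no finiteness assumptions, is what your proof is missing.
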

\begin{proof} For \ref{splitting1}., $\vv U$ is closed under subalgebras by definition, under direct products since $\alg A$ is subdirectly irreducible, and under homomorphic images since $\alg A$ is projective.

For \ref{splitting2}., let $\vv W$ be a subvariety of $\vv V$. If $\alg A \in \vv W$, then  $\vv V(\alg A) \subseteq\vv W$. Otherwise, if $\alg A \notin \vv W$ we get that  $\vv W \subseteq \vv U$, since every algebra $\alg C$ in $\vv W$ is such that $\alg C \in \vv V$ and $\alg A \notin \SU(\alg C)$.

Notice that \ref{splitting2}. implies that the two varieties $\vv U$ and $\vv V(\alg A)$ constitute a splitting pair in the lattice of subvarieties of $\vv V$, which implies that $\vv U$ is axiomatized by a single identity, thus \ref{splitting3}. holds.
\end{proof}

Combining Lemma \ref{finiteprojective} and Lemma \ref{splitting}, we obtain the following.

\begin{theorem}\label{cor:projectivesplitting}
If $\vv V$ is a subvariety of $\mathsf{FL_{ew}}$ closed under ordinal sums, then any finite projective algebra in $\vv V$ is splitting in $\vv V$.
\end{theorem}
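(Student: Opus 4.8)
The plan is to assemble the conclusion directly from the two lemmas that immediately precede the statement, since the theorem is essentially their conjunction. Let $\vv V$ be a subvariety of $\mathsf{FL_{ew}}$ that is closed under ordinal sums, and let $\alg A \in \vv V$ be finite and projective in $\vv V$. The strategy is: first promote projectivity plus finiteness to subdirect irreducibility via Lemma \ref{finiteprojective}, then feed subdirect irreducibility and projectivity into Lemma \ref{splitting} to conclude that $\alg A$ is splitting.

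First I would check that the hypotheses of Lemma \ref{finiteprojective} are met. That lemma requires a class $\vv K$ of $\mathsf{FL_{ew}}$-algebras with $\mathbf 2 \in \vv K$ and $\vv K$ closed under ordinal sums; we take $\vv K = \vv V$. Closure under ordinal sums is assumed, and $\mathbf 2 \in \vv V$ holds because, as remarked in the text, $\mathbf 2$ is the free algebra on the empty generating set and hence lies in every nontrivial subvariety of $\mathsf{FL_{ew}}$. (If $\vv V$ is trivial the statement is vacuous.) Lemma \ref{finiteprojective} then applies to the finite projective algebra $\alg A$ and yields that $1$ is join irreducible in $\alg A$; and its second clause, for finite $\alg A$, gives that $\alg A$ is subdirectly irreducible.

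Next I would invoke Lemma \ref{splitting}, which takes any variety $\vv V$ and any algebra $\alg A$ that is both subdirectly irreducible and projective in $\vv V$. We have just verified subdirect irreducibility, and projectivity is part of our hypothesis, so the lemma applies verbatim. Its conclusion is precisely that $(\vv U, \vv V(\alg A))$, with $\vv U = \{\alg B \in \vv V : \alg A \notin \SU(\alg B)\}$, forms a splitting pair in the lattice of subvarieties of $\vv V$; in particular $\alg A$ is a splitting algebra in $\vv V$. This is exactly the desired conclusion.

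Since every step is a direct citation of an established lemma, I do not anticipate a genuine obstacle; the only point demanding a word of care is confirming that $\vv V$ is nontrivial so that $\mathbf 2 \in \vv V$ (equivalently, noting that the trivial case is vacuous), and that the hypothesis ``closed under ordinal sums'' is used precisely to license applying Lemma \ref{finiteprojective} with $\vv K = \vv V$. A reader might also wish to see explicitly that the finitely generated subdirectly irreducible algebra named in the definition of a splitting pair is $\alg A$ itself, which is immediate since $\alg A$ is finite, hence finitely generated, and subdirectly irreducible.
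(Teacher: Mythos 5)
Your proposal is correct and follows exactly the paper's own argument, which obtains the theorem by combining Lemma \ref{finiteprojective} (giving subdirect irreducibility of a finite projective algebra in a variety containing $\mathbf 2$ and closed under ordinal sums) with Lemma \ref{splitting} (giving that a subdirectly irreducible projective algebra is splitting). Your added care about nontriviality of $\vv V$ and about $\alg A$ being finitely generated is a reasonable, if minor, elaboration of details the paper leaves implicit.
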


On the other hand, the converse holds only in very special cases. Splitting algebras have been thoroughly investigated  in many subvarieties of $\mathsf{FL_{ew}}$ (\cite{Agliano2017c}, \cite{Agliano2018a}, \cite{Agliano2018b}, \cite{AglianoUgolini2019a}), but the seminal paper on the subject is \cite{KowalskiOno2000}.

 We recall that a variety has the {\em finite model property} (or FMP) for its equational theory if and only if it is generated by its finite algebras. As a consequence of the general theory of splitting algebras (see again \cite[Section 10]{GJKO}), if a variety $\vv V$ has the FMP, every splitting algebra in $\vv V$ is a finite subdirectly irreducible algebra.
Thus, in subvarieties of $\mathsf{FL_{ew}}$ closed under ordinal sums and with the FMP, the study of finite projective algebras is particularly relevant for the study of splitting algebras. The finiteness hypothesis cannot be removed, as we will see in Section \ref{heytingsection} below.

The problem of finding splitting algebras in $\mathsf{FL_{ew}}$ is solved by Kowalski and Ono.
\begin{theorem} \cite{KowalskiOno2000} The only splitting algebra in $\mathsf{FL_{ew}}$ is $\mathbf 2$.
\end{theorem}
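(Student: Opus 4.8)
The plan is to reduce the statement to finite subdirectly irreducible algebras, dispose of $\mathbf 2$ directly, and then show that no larger subdirectly irreducible algebra can be splitting. First I would recall the general theory of splitting (see \cite[Section 10]{GJKO}): a splitting algebra is always finitely generated and subdirectly irreducible, and $\VV(\alg A)$ is the upper half of a splitting pair exactly when it is completely join prime in the subvariety lattice, equivalently when there is a largest subvariety of $\mathsf{FL_{ew}}$ that omits $\alg A$. Since $\mathsf{FL_{ew}}$ has the finite model property, the consequence recalled just before the statement guarantees that \emph{every} splitting algebra in $\mathsf{FL_{ew}}$ is in fact finite and subdirectly irreducible. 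Thus it suffices to analyse finite subdirectly irreducible algebras.

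Next I would verify that $\mathbf 2$ genuinely is splitting. Since $\mathbf 2$ is the free algebra on the empty set, it belongs to every nontrivial subvariety, so the largest subvariety omitting $\mathbf 2$ is the trivial one (axiomatised by $0 \app 1$), while $\VV(\mathbf 2)$, the variety of Boolean algebras, is the unique atom in the lattice of subvarieties. Hence the pair (trivial variety, $\VV(\mathbf 2)$) is a splitting pair. It then remains to prove that every finite subdirectly irreducible $\alg A$ with $|A|\ge 3$ fails to be splitting.

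For this main step I would show that $\VV(\alg A)$ is not completely join prime. Writing $\mathcal K=\{\alg B\in\mathsf{FL_{ew}}:\alg A\notin\VV(\alg B)\}$ for the class of algebras omitting $\alg A$, the algebra $\alg A$ is splitting precisely when $\alg A\notin\VV(\mathcal K)$. As $\VV(\mathcal K)$ is axiomatised by the equations valid throughout $\mathcal K$, this amounts to a single clean task: to show that for every equation $\epsilon$ with $\alg A\nvDash\epsilon$ there is some $\alg B\in\mathcal K$ with $\alg B\nvDash\epsilon$ as well, so that no equation separates $\alg A$ from the $\alg A$-omitting algebras. Here I would exploit the absence of contraction in $\mathsf{FL_{ew}}$: given a witness for the failure of $\epsilon$ inside $\alg A$, one stretches $\alg A$ by an ordinal-sum or chain construction (using that $\mathsf{FL_{ew}}$ is closed under ordinal sums, Proposition \ref{prop:ordsumexists} and Lemma \ref{closed}) into an algebra $\alg B$ reproducing the same finite failure of $\epsilon$ while no longer having $\alg A$ in its generated variety.

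The main obstacle is exactly this construction together with its verification: one must manufacture the witnesses $\alg B$ so that, after forming $\VV(\mathcal K)$ and applying J\'onsson's Lemma (available since $\mathsf{FL_{ew}}$ is congruence distributive, having a lattice reduct), no subalgebra, homomorphic image, or ultraproduct of the witnesses in $\HH\SU\PP_u(\mathcal K)$ recreates a copy of the finite algebra $\alg A$. This is the technical heart of Kowalski--Ono's argument, and it is precisely where $\mathsf{FL_{ew}}$ differs from, say, Heyting algebras: there every finite subdirectly irreducible algebra is splitting via a Jankov--de Jongh formula, whereas the weaker structure of $\mathsf{FL_{ew}}$ always permits one to approximate a non-Boolean subdirectly irreducible algebra by properly omitting ones, so that no conjugate equation can exist.
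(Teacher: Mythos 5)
First, a point of comparison: the paper does not prove this statement at all. It is imported verbatim from Kowalski and Ono \cite{KowalskiOno2000}, so your sketch must be measured against the original argument, not against anything in this manuscript. Your framing is correct as far as it goes: since $\mathsf{FL_{ew}}$ has the FMP, the general theory recalled in the paper reduces the problem to finite subdirectly irreducible algebras; $\mathbf 2$ is indeed splitting, because it embeds in every nontrivial algebra (it is the free algebra on no generators), so the trivial variety is the largest subvariety omitting it; and your reformulation ``$\alg A$ is splitting iff $\alg A \notin \VV(\mathcal K)$, where $\mathcal K$ is the class of algebras whose generated variety omits $\alg A$'' is the right one. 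But all of this is routine bookkeeping. The entire mathematical content of the theorem is the step you explicitly defer: for every finite subdirectly irreducible $\alg A$ with $|A|\ge 3$ and every equation $\epsilon$ failing in $\alg A$, produce an algebra $\alg B$ with $\alg A \notin \VV(\alg B)$ in which $\epsilon$ also fails. You name this the ``main obstacle'' and the ``technical heart of Kowalski--Ono's argument'' without carrying it out, so what you have written is a frame around the theorem, not a proof of it.

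Worse, the one concrete device you propose for that step --- stretching $\alg A$ by an ordinal sum --- cannot work. For any $\alg B$, the universe of the top component of $\alg A \oplus \alg B$ is upward closed, contains $1$, and is closed under the product, hence is a deductive filter; collapsing it exhibits $\alg A$ as a homomorphic image of $\alg A \oplus \alg B$, so no such sum ever lies in $\mathcal K$. Stretching from below fares no better in general: $\alg B \oplus \alg A$ contains a subalgebra of the form $\mathbf 2 \oplus$ (zero-free reduct of $\alg A$), and already for the three-element G\"odel chain $\alg A = \mathbf 2 \oplus \mathbf 2$ one gets $\alg A \in \HH(\mathbf 2 \oplus \mathbf 2 \oplus \mathbf 2)$, so again $\alg A$ is not omitted. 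This reflects the fact (which you yourself note) that in Heyting algebras every finite subdirectly irreducible algebra \emph{is} splitting: any construction that merely stacks components preserves too much, since ordinal summands survive as quotients or subalgebras. The Kowalski--Ono witnesses are built by altering the monoid structure \emph{inside} the algebra --- inserting new elements whose powers descend into $A$, which is possible exactly because contraction fails --- and such algebras are precisely not ordinal sums, since in an ordinal sum each component is closed under the operations. So the gap is not only that the key construction is missing, but that the direction you point in for it is one that provably cannot supply it.
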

Since $\mathbf 2$ is the free algebra over the empty set of generators of any nontrivial subvariety of $\mathsf{FL_{ew}}$, we get the following fact.
\begin{lemma}\label{lemma:2proj}
$\alg 2$ is projective in every nontrivial subvariety of $\mathsf{FL_{ew}}$.
\end{lemma}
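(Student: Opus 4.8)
The plan is to deduce the lemma from the final assertion of Theorem \ref{whitman}, namely that every free algebra in a class is projective in that class. Since any variety contains all of its free algebras, it therefore suffices to verify that $\alg 2$ is itself a free algebra in each subvariety $\vv V$ of $\mathsf{FL_{ew}}$; concretely, I would show that $\alg 2$ is the free algebra $\alg F_\vv V(\emptyset)$ on the empty set of generators, exactly as anticipated in the sentence preceding the statement.

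First I would observe that $\{0,1\}$ is closed under all the basic operations in any $\mathsf{FL_{ew}}$-algebra: one has $0 \cdot 1 \app 0$, $1 \imp 0 \app 0$, $0 \imp 1 \app 1$, together with the evident values of $\join$ and $\meet$ on $\{0,1\}$, so these two constants generate a two-element Boolean subalgebra. Consequently every closed term of the language reduces, using only identities valid throughout $\mathsf{FL_{ew}}$, to either $0$ or $1$. This shows that the carrier of $\alg F_\vv V(\emptyset)$ has at most two elements; and whenever $\vv V$ is nontrivial we have $0 \neq 1$, so the carrier has exactly two elements and the induced structure is forced to be Boolean. Hence $\alg F_\vv V(\emptyset) \cong \alg 2$.

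With this identification in hand, the lemma is immediate: by Theorem \ref{whitman} the free algebra $\alg F_\vv V(\emptyset)$ is projective in $\vv V$, and it is isomorphic to $\alg 2$. As $\vv V$ ranges over all nontrivial subvarieties of $\mathsf{FL_{ew}}$, this yields the claim, the trivial variety being vacuous.

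The argument is essentially immediate once the structural fact is accepted, so there is no genuine obstacle. The only step demanding attention is the closure computation for $\{0,1\}$ and the resulting reduction of closed terms; this is a short and purely mechanical check that relies solely on the defining identities of $\mathsf{FL_{ew}}$ and hence applies uniformly to every subvariety.
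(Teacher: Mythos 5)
Your proof is correct and follows essentially the same route as the paper: the paper justifies the lemma precisely by noting that $\alg 2$ is the free algebra on the empty set of generators in any (nontrivial) subvariety of $\mathsf{FL_{ew}}$, whence projectivity follows from the last assertion of Theorem \ref{whitman}. Your additional verification that $\{0,1\}$ is a subuniverse and that every closed term collapses to $0$ or $1$ simply makes explicit what the paper leaves implicit.
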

Combining the two previous results with Theorem \ref{cor:projectivesplitting}, we get the following interesting fact.
\begin{theorem} The only finite projective algebra in $\mathsf{FL_{ew}}$ is $\mathbf 2$.
\end{theorem}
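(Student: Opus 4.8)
The plan is to combine the three results that immediately precede the final statement.  The theorem to be proved is that the only finite projective algebra in $\mathsf{FL_{ew}}$ is $\alg 2$, and the strategy is simply to show containment in both directions: every finite projective algebra in $\mathsf{FL_{ew}}$ must be $\alg 2$, and $\alg 2$ is indeed projective.

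First I would invoke Lemma \ref{lemma:2proj}, which already tells us that $\alg 2$ is projective in every subvariety of $\mathsf{FL_{ew}}$, and in particular in $\mathsf{FL_{ew}}$ itself.  This handles one direction and shows the set of finite projective algebras is nonempty.  For the converse, let $\alg A$ be an arbitrary finite projective algebra in $\mathsf{FL_{ew}}$.  Since $\mathsf{FL_{ew}}$ is closed under ordinal sums (Proposition \ref{prop:ordsumexists}), Theorem \ref{cor:projectivesplitting} applies with $\vv V = \mathsf{FL_{ew}}$, and I would conclude that $\alg A$ is splitting in $\mathsf{FL_{ew}}$.

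Next I would apply the Kowalski--Ono theorem, which states that the only splitting algebra in $\mathsf{FL_{ew}}$ is $\alg 2$.  Combining this with the previous step forces $\alg A \cong \alg 2$.  Putting the two directions together yields the claim: $\alg 2$ is finite and projective, and it is the only finite projective algebra.

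This argument is essentially a three-line assembly of quoted results, so there is no genuine obstacle in the proof itself; all the real work has been front-loaded into Theorem \ref{cor:projectivesplitting} (which itself rests on Lemma \ref{finiteprojective} and Lemma \ref{splitting}) and into the cited Kowalski--Ono classification.  The only point deserving a moment's care is checking that the hypotheses of Theorem \ref{cor:projectivesplitting} are met, namely that $\mathsf{FL_{ew}}$ is a subvariety of itself that is closed under ordinal sums; both are immediate from Proposition \ref{prop:ordsumexists}.  I would therefore expect the proof to be a short paragraph that names these three ingredients in sequence rather than any extended calculation.
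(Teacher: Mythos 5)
Your proposal is correct and follows exactly the paper's own route: the paper obtains this theorem precisely by combining Lemma \ref{lemma:2proj}, the Kowalski--Ono classification of splitting algebras in $\mathsf{FL_{ew}}$, and Theorem \ref{cor:projectivesplitting} (whose ordinal-sum hypothesis is supplied by Proposition \ref{prop:ordsumexists}). There is nothing missing; the paper itself presents the result as an immediate consequence of these three ingredients, just as you do.
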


Interestingly, (the $0$-free reduct of) $\alg 2$ is instead not projective in $\mathsf{CIRL}$, since in \cite{Ugolini2022} it is shown that it is not projective in the variety of Wajsberg hoops, and as we previously noticed, projectivity is preserved in subvarieties.

By Proposition \ref{prop:ordsumexists}, $\mathsf{HL}$ is another variety closed under ordinal sums.
$\mathsf{HL}$ has the finite model property \cite{BlokFerr1993} and moreover:
\begin{theorem}\cite{Agliano2018a} An algebra is splitting in $\mathsf{HL}$ if and only if it is isomorphic with $\alg A \oplus \mathbf 2$ for some finite $\alg A \in \mathsf{HL}$.
\end{theorem}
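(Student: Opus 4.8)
The plan is to reduce everything to the structure theory of finite subdirectly irreducible (SI) $\mathsf{HL}$-algebras. Since $\mathsf{HL}$ is congruence distributive and has the FMP, the general theory recalled above tells us that \emph{every} splitting algebra of $\mathsf{HL}$ is finite and SI; so in both directions I may restrict to finite SI algebras. For such an $\alg S$ the element $1$ is completely join irreducible, hence $\alg S$ has a greatest element $c$ strictly below $1$ (its coatom). Writing $\alg S$ as a finite ordinal sum of sum-irreducibles $\alg S=\bigoplus_{i=0}^n\alg S_i$, the coatom $c$ is the coatom of the topmost summand $\alg S_n$, which is again finite, SI and sum-irreducible (the monolith of an ordinal sum lives in the top component). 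The first step I would isolate is the purely structural equivalence: $\alg S\cong\alg A\oplus\mathbf 2$ for some $\alg A$ (namely $\alg A=\bigoplus_{i<n}\alg S_i$, possibly trivial) if and only if $\alg S_n\cong\mathbf 2$, if and only if the coatom $c$ is idempotent, $c^2=c$. The content here is that an idempotent coatom always splits off a top copy of $\mathbf 2$ (the filter $\upa c=\{c,1\}$ is then principal and generated by an idempotent, and an ordinal-sum computation using divisibility separates $\{c,1\}$), while conversely any sum-irreducible hoop with more than two elements must satisfy $c^2<c$.

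For the \emph{if} direction I would take $\alg S=\alg A\oplus\mathbf 2$ with $\alg A$ finite and show directly that $\alg S$ is splitting. It is immediate that $\alg S$ is finite and SI, since the element $d$ of the top $\mathbf 2$ is the greatest element below $1$. The heart of the argument is to show that
$$\vv U=\{\alg B\in\mathsf{HL}:\alg S\notin\VV(\alg B)\}$$
is a subvariety; granting this, $(\vv U,\VV(\alg S))$ is a splitting pair and the FMP together with finiteness of $\alg S$ yields a single splitting equation. Now $\vv U$ is automatically closed under $\SU$ and $\HH$ (because $\VV(\alg C)\subseteq\VV(\alg B)$ whenever $\alg C\in\SU(\alg B)\cup\HH(\alg B)$), so the only real content is closure under $\PP$, which is exactly complete join primeness of $\VV(\alg S)$ in the subvariety lattice. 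This is where idempotency of the coatom is decisive: the monolith $\mu$ of $\alg S$ is $\th_F$ with $F=\upa d=\{d,1\}$ a principal filter generated by the idempotent $d$, and one shows via Jónsson's lemma that a copy of $\alg S$ appearing (up to its monolith) inside a quotient of a product $\prod_i\alg B_i$ can be localized to a single factor, forcing $\alg S\in\VV(\alg B_i)$ for some $i$. I expect this localization to be the main obstacle of this direction; it goes through precisely because $d$ is idempotent, so that the witnessing filter is respected by the projections.

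For the \emph{only if} direction I would argue contrapositively: if $\alg S$ is finite SI with $\alg S_n\not\cong\mathbf 2$, equivalently with a non-idempotent coatom $c$ (so that $c>c^2>\cdots$ strictly decreases for a while), then $\alg S$ is not splitting. The strategy is to violate complete join primeness of $\VV(\alg S)$ directly, using that $\mathsf{HL}$ is closed under ordinal sums: I would build from $\alg S_n$ and copies of $\mathbf 2$ a family of algebras, each generating a subvariety that omits $\alg S$, whose join nonetheless contains $\alg S$ — an ultraproduct of the family reproducing $\alg S$ exactly at its monolith. Equivalently, this exhibits the failure of $\PP$-closure of the class $\vv U$ above. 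This is the mirror image of the localization step, and it is the non-idempotency of $c$ that makes the monolith ``approachable'' and thus destroys the splitting. I regard the explicit construction of this witnessing family, together with the verification that it recovers $\alg S$ at the monolith while each member omits $\alg S$, as the principal technical difficulty of the whole proof.
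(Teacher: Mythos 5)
This theorem is not actually proved in the paper you were given: it is quoted from \cite{Agliano2018a}, so your proposal has to be measured against the proof given there (which follows the Kowalski--Ono template from \cite{KowalskiOno2000}). Two ingredients of your plan are correct and do match the cited proof's organization: the reduction to finite subdirectly irreducible algebras via congruence distributivity and the FMP, and the structural dichotomy that the top sum-irreducible component is $\mathbf 2$ if and only if the coatom $c$ is idempotent --- which is a genuine use of divisibility, since $x\le c$ gives $x=c(c\imp x)$ and hence $cx=c^2(c\imp x)=c(c\imp x)=x$.

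The gap is that the two steps you yourself flag as ``the main obstacle'' and ``the principal technical difficulty'' are where \emph{all} the content lies, and the one mechanism you do propose for the \emph{if} direction cannot be completed as described. Your sketch --- P-closure of $\vv U$ by ``Jónsson localization'', claimed to work ``precisely because $d$ is idempotent'' --- uses only that $\alg S$ is finite, subdirectly irreducible, with idempotent coatom and principal monolith filter, plus congruence distributivity, the FMP, and closure under ordinal sums of the ambient variety. All of these hypotheses hold verbatim for $\alg S=\mathbf 2\oplus\mathbf 2$ inside $\mathsf{FL_{ew}}$, yet by Kowalski--Ono's theorem (quoted in this paper) $\mathbf 2$ is the \emph{only} splitting algebra of $\mathsf{FL_{ew}}$; so if your localization argument worked it would prove something false. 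Any correct proof must use divisibility of the ambient finite algebras $\alg B$, not just properties of $\alg S$; indeed Jónsson's lemma only gives $\alg S\in\HH\SU\PP_u(\{\alg B_i\})$, and ultraproducts of infinitely many distinct factors --- which your ``localization to a single factor'' silently ignores --- are exactly why most finite subdirectly irreducible $\mathsf{FL_{ew}}$-algebras fail to be splitting. What does the work in \cite{Agliano2018a} is \emph{weak projectivity over finite algebras}: if $\alg A\oplus\mathbf 2\in\HH(\alg B)$ with $\alg B$ a finite $\mathsf{HL}$-algebra, then $\alg A\oplus\mathbf 2\in\II\SU(\alg B)$, proved by the same divisibility retraction as in Theorems \ref{mth} and \ref{mth0} of this paper (the kernel filter of a finite divisible algebra has a minimum idempotent $u$, the map $x\mapsto u\imp x$ is an endomorphism, and the correction by the homomorphism onto $\mathbf 2$ --- available exactly because the top component is $\mathbf 2$ --- makes it preserve $0$); this is then converted into a splitting pair by a Jankov-style diagram equation, following McKenzie and Kowalski--Ono. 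Symmetrically, for the \emph{only if} direction the cited proof exhibits the witnessing family concretely (finite Wajsberg chains placed on top, each omitting $\alg S$ from $\HH\SU$, with a nonprincipal ultraproduct recovering $\alg S$ at its monolith), whereas in your proposal this construction is only promised. As it stands, your text is a plausible roadmap whose two essential steps are missing, and one of the two would have to be repaired, not just filled in.
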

Thus we can combine the previous theorem with Theorem \ref{cor:projectivesplitting}:
\begin{corollary}
All the finite projective algebras in $\mathsf{HL}$ are isomorphic with $\alg A \oplus \mathbf 2$ for some finite $\alg A \in \mathsf{HL}$.
\end{corollary}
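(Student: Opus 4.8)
The plan is to simply chain together the two structural results that precede the statement, since the corollary is exactly their conjunction once we verify the hypothesis of the splitting-equals-projective transfer. So let $\alg B$ be an arbitrary finite algebra that is projective in $\mathsf{HL}$; I want to show $\alg B \cong \alg A \oplus \mathbf 2$ for some finite $\alg A \in \mathsf{HL}$.

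First I would note that $\mathsf{HL}$ is a subvariety of $\mathsf{FL_{ew}}$ that is closed under ordinal sums: this is precisely part (2) of Proposition \ref{prop:ordsumexists}. This is the only hypothesis that needs checking before the machinery applies, and it has already been established in the preliminaries. With it in hand, Theorem \ref{cor:projectivesplitting} applies verbatim to $\vv V = \mathsf{HL}$, and yields that every finite projective algebra in $\mathsf{HL}$ is splitting in $\mathsf{HL}$. In particular $\alg B$ is a splitting algebra of $\mathsf{HL}$.

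It then remains to invoke the characterization of splitting algebras in $\mathsf{HL}$ from \cite{Agliano2018a}, stated immediately above: an algebra is splitting in $\mathsf{HL}$ if and only if it is isomorphic with $\alg A \oplus \mathbf 2$ for some finite $\alg A \in \mathsf{HL}$. Applying the forward direction to $\alg B$ gives $\alg B \cong \alg A \oplus \mathbf 2$ for some finite $\alg A \in \mathsf{HL}$, which is exactly the desired conclusion.

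There is really no substantive obstacle here, since all the difficulty has been absorbed into the two cited theorems; the corollary is a direct composition of implications (finite projective $\Rightarrow$ splitting $\Rightarrow$ of the form $\alg A \oplus \mathbf 2$). The only point requiring any attention is ensuring the transfer theorem is legitimately applicable, i.e. that closure of $\mathsf{HL}$ under ordinal sums holds, which Proposition \ref{prop:ordsumexists} guarantees. If anything, the instructive remark to record is that the converse inclusion is not being claimed: not every algebra of the form $\alg A \oplus \mathbf 2$ need be projective, so the corollary gives a necessary, not sufficient, shape for finite projectives in $\mathsf{HL}$.
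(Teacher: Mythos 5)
Your proof is correct and follows exactly the paper's own argument: the corollary is obtained by combining Theorem \ref{cor:projectivesplitting} (applicable since $\mathsf{HL}$ is closed under ordinal sums by Proposition \ref{prop:ordsumexists}) with the cited characterization of splitting algebras in $\mathsf{HL}$ from \cite{Agliano2018a}. Your closing remark that this gives only a necessary condition is also accurate and consistent with the paper's treatment.
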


We close this section with some other general considerations about projectivity and ordinal sums in $\mathsf{FL_{ew}}$ and in its $n$-potent subvarieties that will be useful in the rest of the paper.

\begin{lemma}\label{techlemma1} Let $\vv V$ be a subvariety of $\mathsf{FL_{ew}}$ with the finite model property and let $\alg A$ be projective in  $\vv V$; if $C$ is an infinite subset of $A$
closed under $\imp$, then the least upper bound of $C \setminus\{1\}$ exists and it is equal to $1$.
\end{lemma}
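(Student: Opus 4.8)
The plan is to turn the statement into a pigeonhole argument about a single finite quotient of $\alg A$. Projectivity together with the finite model property will supply enough finite quotients, and the closure of $C$ under $\imp$ will control how $C$ can sit inside any one of them.

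First I would record the following consequence of the two hypotheses: $\alg A$ embeds into a product of finite algebras of $\vv V$. By Theorem~\ref{whitman}, a projective algebra is a retract of a free algebra, so $\alg A$ embeds into $\alg F_\vv V(X)$ for a suitable $X$. Since $\vv V$ has the finite model property it is generated by its finite members, so the congruence defining $\alg F_\vv V(X)$ is the intersection of the kernels of all homomorphisms from the term algebra into finite members of $\vv V$; hence $\alg F_\vv V(X)$, and therefore $\alg A$, embeds into a product $\prod_{i\in I}\alg D_i$ of finite algebras $\alg D_i\in\vv V$. Write $\pi_i\colon\alg A\to\alg D_i$ for the resulting homomorphisms into finite algebras; they jointly separate the points of $\alg A$.

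Next comes the core argument, by contradiction. Note that $1\in C$ (since $c\imp c=1$) and that $1$, being the top element, is already an upper bound of $C\setminus\{1\}$; the real content is that nothing strictly below $1$ bounds $C\setminus\{1\}$. So suppose $u<1$ were such an upper bound. Since $u\ne 1$, some coordinate map does not send $u$ to $1$; fixing such an $i_0$ and abbreviating $\pi=\pi_{i_0}$, $\alg D=\alg D_{i_0}$, order preservation of homomorphisms gives $\pi(c)\le\pi(u)<1$ for every $c\in C\setminus\{1\}$, while $\pi(1)=1$. I would then show that $\pi$ is injective on $C$: if $\pi(a)=\pi(b)$ with $a,b\in C$, then $a\imp b\in C$ and $\pi(a\imp b)=\pi(a)\imp\pi(a)=1$, so $a\imp b$ cannot lie in $C\setminus\{1\}$ (those all map below $\pi(u)<1$); hence $a\imp b=1$, i.e.\ $a\le b$, and symmetrically $b\le a$, so $a=b$. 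Thus $\pi$ embeds the infinite set $C$ into the finite algebra $\alg D$, which is absurd. Consequently no $u<1$ bounds $C\setminus\{1\}$, so $1$ is its unique, and hence least, upper bound.

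The step requiring the most care — and the one that genuinely uses projectivity rather than the finite model property alone — is the first one: the finite model property does \emph{not} force an arbitrary member of $\vv V$ to embed into a product of finite algebras, so it is essential to pass through the free algebra (where such an embedding is guaranteed by the construction of the relatively free algebra) before invoking finiteness. Once the correct finite quotient $\pi$ has been isolated, the remainder is the elementary injectivity/pigeonhole computation above.
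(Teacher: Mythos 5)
Your proof is correct and follows essentially the same route as the paper's: projectivity is used to embed $\alg A$ into a free algebra, the finite model property is used to produce a homomorphism into a finite member of $\vv V$ under which the putative upper bound $u<1$ does not go to $1$, and closure of $C$ under $\imp$ plus pigeonhole yields the contradiction. The only cosmetic differences are that you invoke the FMP globally (residual finiteness of the free algebra, via Birkhoff's subdirect representation) where the paper applies it to the single failing equation $g(u)\approx 1$, and that you only use the embedding $g$ into the free algebra and never the retraction $f$ with $fg=\operatorname{id}_{\alg A}$, which the paper uses only to check $g(u)\neq 1$.
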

\begin{proof} Assume that this is not the case, thus either the least upper bound does not exist or it is not $1$. In any case $C \setminus \{1\}$ has (at least) an upper bound $a < 1$. Since $\alg A$ is projective it is a retract of a suitable
free algebra in $\vv V$, say $\alg F_\vv V(X)$; so there is a surjective homomorphism $f\colon \alg F_\vv V(X) \longrightarrow \alg A$ and a monomorphism $g\colon \alg A \longrightarrow \alg F_\vv V(X)$ such that $fg=id_\alg A$. Let $t(\vuc xn) = g(a)$; if $t(\vuc xn) =1$, then $1=f(t(\vuc xn)) = fg(a) =a$, a contradiction.  Thus $t(\vuc xn) \ne 1$ and so the equation $t(\vuc xn) \app 1$ must fail in $\vv V$; since $\vv V$ has the finite model property, there must be a finite $\alg B \in \vv V$ in which $t(\vuc xn) \app 1$ fails. This is equivalent to saying that there is a surjective homomorphism $h\colon\alg F_\vv V(X) \longrightarrow \alg B$ with $h(t(\vuc xn)) \ne 1$. Now $g(C)$ is infinite, since $g$ is a monomorphism, and $\alg B$ is finite; so there are $s, r\in g(C)$ such that $s \ne r$ and $h(s) = h(r)$. Without loss of generality, we assume that $r \not\le s$, then $r \imp s \ne 1$ so $r \imp s \in g(C \setminus\{1\})$ and thus $t(\vuc xn) \ge r \imp s$. In conclusion
$$
h(t(\vuc xn)) \ge h(r \imp s) = h(r) \imp h(s) = 1,
$$
thus $h(t(\vuc xn)) = 1$, that is a contradiction. Hence the thesis follows.
\end{proof}

As a consequence we get the following.

\begin{proposition}\label{sumprojective}  Let $\vv V$ be a subvariety of $\mathsf{FL_{ew}}$ with the finite model property and let $\alg A, \alg B \in \vv V$, with $\alg B$ nontrivial, be such that
$\alg A \oplus \alg B$ is projective in $\vv V$ (and so $\alg A \oplus \alg B \in \vv V$).  Then $\alg A$ is finite.
\end{proposition}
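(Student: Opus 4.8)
The plan is to argue by contradiction, with Lemma \ref{techlemma1} as the essential tool: that lemma constrains the order-theoretic behaviour of infinite $\imp$-closed subsets of a projective algebra, and the bottom summand $A$ is exactly such a subset of $\alg A \oplus \alg B$.

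First I would suppose that $\alg A$ is infinite and regard $A$ as the bottom component of the ordinal sum $\alg A \oplus \alg B$, so that $A \subseteq \alg A \oplus \alg B$. The key structural observation is that $A$ is closed under $\imp$ \emph{as computed in} $\alg A \oplus \alg B$: by the definition of the ordinal sum operations, whenever $a,a' \in A$ the implication $a \imp a'$ is evaluated inside the component $\alg A$, hence again lies in $A$. Thus, setting $C := A$, we obtain an infinite subset of the projective algebra $\alg A \oplus \alg B$ that is closed under $\imp$.

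Next I would apply Lemma \ref{techlemma1} to $\alg A \oplus \alg B$ (which is projective in $\vv V$, and $\vv V$ has the finite model property) with this $C = A$. The lemma yields that the least upper bound of $A \setminus \{1\}$ exists in $\alg A \oplus \alg B$ and is equal to $1$. Finally I would contradict this using the shape of the ordinal sum. We may assume $\alg B$ is nontrivial (otherwise $\alg A \oplus \alg B \cong \alg A$ and the summand $\alg B$ is empty of content), so pick any $b \in B \setminus \{1\}$. By the ordinal sum ordering every element of $A \setminus \{1\}$ lies below $b$, so $b$ is an upper bound of $A \setminus \{1\}$; but $b \neq 1$ forces $b < 1$, contradicting that the least upper bound of $A \setminus \{1\}$ equals $1$. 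Hence $\alg A$ must be finite.

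The main obstacle — indeed essentially the only subtle point — is reading the ordinal sum operations correctly: one must verify that $A$ is genuinely $\imp$-closed in $\alg A \oplus \alg B$ (it is, since implication of two bottom-component elements is inherited from $\alg A$) and that any nontrivial top element yields a strict upper bound below $1$. Once these are in place, the statement follows by a single invocation of Lemma \ref{techlemma1}.
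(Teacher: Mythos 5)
Your proof is correct and is essentially the paper's own argument: the paper likewise applies Lemma~\ref{techlemma1} to $C=A$ inside the projective algebra $\alg A \oplus \alg B$, observing that $A$ is $\imp$-closed and that the least upper bound of $A\setminus\{1\}$ is not $1$ (since any element of $B\setminus\{1\}$ is an upper bound below $1$), just phrased contrapositively rather than by contradiction. Your explicit remark that $\alg B$ must be nontrivial is an implicit hypothesis in the paper's statement and proof as well, so the two arguments coincide.
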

\begin{proof} $A$ is a subset of $\alg A \oplus \alg B$ closed under $\imp$ such that the least upper bound of $A \setminus \{1\}$ is not $1$; by Lemma \ref{techlemma1}, $\alg A$ must be finite.
\end{proof}

In general if $\alg A, \alg B, \alg A \oplus \alg B \in \vv V$ and $\alg A \oplus \alg B$ is projective, $\alg B$ is not necessarily projective; however there is a weakening of the notion that is very useful in subvarieties of $\mathsf{FL_{ew}}$. We call an algebra $\alg A \in \vv V$ {\em zero-projective} in $\vv V$ if for any $\alg B,\alg C \in \vv V$, any homomorphism $h\colon\alg A \longrightarrow \alg C$  and any surjective homomorphism $g\colon \alg B \longrightarrow \alg C$ such that $g^{-1}(0) = \{0\}$, there is a homomorphism
$f\colon \alg A \longrightarrow \alg B$ with $h=gf$.

\begin{lemma}\label{weaklyprojective} Let $\vv V$ be a subvariety of $\mathsf{FL_{ew}}$ closed under ordinal sums. If  $\alg A_i\in \vv V$ for all $i=0,\dots,n$ and $\bigoplus_{i=0}^n \alg A_i$ is projective, then $\alg A_i$ is zero-projective for all $i= 0,\dots, n$.
\end{lemma}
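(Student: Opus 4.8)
The plan is to prove zero-projectivity of each $\alg A_k$ by transferring the lifting problem to the full projectivity of $\alg S:=\bigoplus_{i=0}^n\alg A_i$. Fix $k$ and suppose we are given $\alg B,\alg C\in\vv V$, a homomorphism $h:\alg A_k\longrightarrow\alg C$, and a surjection $g:\alg B\longrightarrow\alg C$ with $g^{-1}(0)=\{0\}$; we must build $f:\alg A_k\longrightarrow\alg B$ with $gf=h$. First I would form the two auxiliary ordinal sums in which $\alg B$ and $\alg C$ occupy the \emph{top} slot,
$$
\alg B'=\Big(\bigoplus_{i<k}\alg A_i\Big)\oplus\alg B,\qquad
\alg C'=\Big(\bigoplus_{i<k}\alg A_i\Big)\oplus\alg C ,
$$
both in $\vv V$ because $\vv V$ is closed under ordinal sums. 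Let $g':\alg B'\longrightarrow\alg C'$ be the identity on the lower components and $g$ on the top; it is a surjective homomorphism. Let $h':\alg S\longrightarrow\alg C'$ be the identity on the components $i<k$, equal to $h$ on $\alg A_k$, and constantly $1$ on the components $i>k$. A direct check shows $h'$ is a homomorphism (on joins reaching $1_{A_k}$ this uses that $h$ itself, being a homomorphism, sends them to $1$). I put $\alg B$ and $\alg C$ on top --- rather than reinserting the tail $\bigoplus_{i>k}\alg A_i$ above them --- precisely so that $g'$ is a homomorphism: with anything stacked above, the ordinal-sum redefinition of joins reaching the top would clash with $g$, which need not reflect $1$.

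Since $\alg S$ is projective and $g'$ is onto, there is $f':\alg S\longrightarrow\alg B'$ with $g'f'=h'$. Now I would restrict $f'$ to $A_k$. The key observation is that $g'$ preserves the layer structure: the only elements of $\alg B'$ mapped by $g'$ into the top component $\alg C$ of $\alg C'$ are those of the top component $\alg B$, i.e. $g'^{-1}(\alg C)=\alg B$ (the lower layers map identically into lower layers, and only $1$ is shared). As $h'(A_k)\subseteq\alg C$, for every $a\in A_k$ we get $g'(f'(a))=h(a)\in\alg C$, whence $f'(a)\in\alg B$. Thus $f:=f'|_{A_k}$ is a map $\alg A_k\to\alg B$, and $g(f(a))=g'(f'(a))=h'(a)=h(a)$, so $gf=h$.

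It remains to verify that $f$ is a homomorphism. Preservation of $1$, $\cdot$, $\imp$ and $\meet$ is routine, since these operations agree with the ordinal-sum operations inside a single component, $f'$ preserves them, and inside the top component $\alg B$ of $\alg B'$ they again agree with those of $\alg B$. The one interesting positive step --- and the place where the hypothesis $g^{-1}(0)=\{0\}$ is used --- is preservation of $0$: since $h(0_{A_k})=0$ we have $g(f(0_{A_k}))=0$, and $g^{-1}(0)=\{0\}$ forces $f(0_{A_k})=0$. (For $k=0$ this is automatic, as $0_{A_0}$ is the bottom of $\alg S$; the hypothesis is what makes it work for the interior and top components, whose zero is not the global one.)

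The step I expect to be the main obstacle is preservation of $\join$ for pairs $a,b\in A_k$ with $a\join_{A_k}b=1_{A_k}$. When $1_{A_k}$ is join-reducible and $k<n$, the ordinal sum $\alg S$ redefines this join to the bottom $u$ of the layer $k+1$, so $f'$ only yields $f(a)\join_{\alg B}f(b)=f'(u)$, and all we know is $f'(u)\in g^{-1}(1)$ --- not that it equals $1$, as preservation would require. When $1_{A_k}$ is join-irreducible (in particular whenever the components are totally ordered, which is the setting of the later applications to chains, hoops and $\BL$-algebras) this difficulty evaporates, since then no proper pair joins to $1_{A_k}$ and $f$ is automatically a homomorphism. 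I therefore expect the heart of the argument to be a separate observation --- in the spirit of Lemma \ref{finiteprojective} --- that projectivity of $\alg S$ already forces each component top $1_{A_k}$ to be join-irreducible; with that in hand, the construction above completes the proof.
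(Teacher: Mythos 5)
Your construction is, step for step, the paper's own proof: the same $g'\colon \alg U\oplus\alg B\to\alg U\oplus\alg C$ and the same $h'$ on the full sum (identity on the components below $\alg A_k$, $h$ on $\alg A_k$, constantly $1$ above), the same lift $f'$ obtained from projectivity, the same restriction to $A_k$, and the same use of $g^{-1}(0)=\{0\}$ to handle the constant $0$; all of these verifications are correct. Where you add something genuine is in flagging the join: the paper simply asserts that $f'|_{A_k}$ is a homomorphism, whereas, as you observe, if $a\join_{A_k}b=1_{A_k}$ with $a,b<1$ and $k<n$, then in the sum $a\join b$ is redefined to the bottom $u$ of the $(k{+}1)$-st component, so all one gets is $f(a)\join_{\alg B}f(b)=f'(u)\in g^{-1}(1)$, which need not equal $1$. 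This hole is present in the paper's argument as well, not only in yours.

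However, the patch you propose cannot work, and in fact no patch can. Join irreducibility of the component tops does \emph{not} follow from projectivity of the sum: by the paper's own Theorem \ref{projectiveHA} (case $\kappa=1$, $\alg A_0=\alg 4$), the algebra $\alg 4\oplus\alg 2$ is projective in $\mathsf{HA}$, a variety closed under ordinal sums, yet $1_{\alg 4}$ is the join of the two atoms of $\alg 4$; Lemma \ref{finiteprojective} constrains only the global top, i.e., the last component. Worse, this very example refutes Lemma \ref{weaklyprojective} as stated, so the gap is fatal rather than repairable. Write $\alg 4\oplus\alg 2=\{0,p,q,t,1\}$ with atoms $p,q$ and $p\join q=t<1$. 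Then $F=\{t,1\}$ is a filter, the quotient $g\colon\alg 4\oplus\alg 2\to(\alg 4\oplus\alg 2)/F\cong\alg 4$ is surjective with $g^{-1}(0)=\{0\}$ (since $\neg p=q$, $\neg q=p$, $\neg t=0$), and each of $0,p,q$ is alone in its congruence class. Taking $h$ to be the isomorphism $\alg 4\cong(\alg 4\oplus\alg 2)/F$, any $f$ with $gf=h$ must send the two atoms of $\alg 4$ to $p$ and $q$ respectively (their classes have unique preimages), whence $f(a)\join f(\neg a)=p\join q=t\ne 1=f(a\join\neg a)$, so $f$ cannot be a homomorphism. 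Thus $\alg 4$ is a component of a projective ordinal sum but is not zero-projective in $\mathsf{HA}$. Your diagnosis of the obstruction is exactly right; the correct conclusion to draw from it is not a missing join-irreducibility lemma, but that the statement itself (and the paper's proof of it) fails for components whose top is join reducible and which sit below other components.
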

\begin{proof} Fix $i \le n$ and let $\alg U = \bigoplus_{j=0}^{i-1} \alg A_j$ and $\alg V = \bigoplus_{j=i+1}^n \alg A_j$; note that $\alg U$, $\alg V$ may be trivial if $i=0$ or $i=n$ but in any case $\bigoplus_{j=0}^n \alg A_j = \alg U \oplus \alg A_i \oplus \alg V$.  Suppose that there are $\alg B,\alg C \in \vv V$, a homomorphism $h\colon\alg A_i \longrightarrow \alg C$, and an onto homomorphism $g\colon\alg B \longrightarrow \alg C$ such that $g^{-1}(0) = \{0\}$. We will show that there is a homomorphism
$f\colon \alg A_{i} \longrightarrow \alg B$ with $h=gf$.
Consider the map $g'\colon \alg U \oplus \alg B \longrightarrow \alg U \oplus \alg C $ defined by $g'(x) = x$ if $x \in  U \setminus \{1\}$ and $g'(x)=g(x)$ otherwise; it follows from Proposition 3.2 in \cite{AglianoMontagna2003} that $g'$ is an onto homomorphism. Similarly we can define a homomorphism $h'\colon \alg U \oplus \alg A_i \oplus \alg V \longrightarrow \alg U \oplus \alg C$, as $h'(x) = x$ if $x \in  U \setminus \{1\}$, $h'(x) = h(x)$ if $x \in A_{i}$ and $h(x) = 1$ otherwise. Since $\alg U \oplus \alg A_i \oplus \alg V = \bigoplus_{j=0}^n \alg A_j $ is projective, there is an $f'\colon \alg U \oplus \alg A_i  \oplus \alg V \longrightarrow \alg U \oplus \alg B $ with $h'=g'f'$. Now $g'f'(0_{\alg A_i}) = h'(0_{\alg A_i})= h(0_{\alg A_i}) = 0_\alg C$. Given that $g'^{-1}(0_{\alg C}) = g^{-1}(0_{\alg C}) = \{0_{\alg B}\}$, we get that  $f'(0_{\alg A_{i}}) = 0 _\alg B$;  thus the restriction of $f'$ to $\alg A_{i}$, $f\colon \alg A_i \longrightarrow \alg B$  is a homomorphism.  Thus $gf=h$, and $\alg A_i$ is zero-projective.
\end{proof}

Moreover, we have the following.
\begin{lemma}\label{weaklyprojective2} Let $\vv V$ be a subvariety of  $\mathsf{CIRL}$; if $\alg A \in \vv V$ and $\mathbf 2 \oplus \alg A$ is zero-projective in the class $\vv K =\{\mathbf 2 \oplus \alg B: \alg B \in \vv V\}$, then $\alg A$ is projective in $\vv V$.
\end{lemma}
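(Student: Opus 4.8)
The plan is to use the assignment $\alg B \mapsto \mathbf 2 \oplus \alg B$ as a functor from $\vv V$ into $\vv K$ that converts the projectivity problem for $\alg A$ in $\vv V$ into the zero-projectivity problem for $\mathbf 2 \oplus \alg A$ in $\vv K$. Concretely, suppose we are given $\alg B, \alg C \in \vv V$, a homomorphism $h \colon \alg A \longrightarrow \alg C$ and a surjective homomorphism $g \colon \alg B \longrightarrow \alg C$; we must produce $f \colon \alg A \longrightarrow \alg B$ with $h = gf$. First I would lift the whole diagram to $\vv K$ by setting $g' = \op{id}_{\mathbf 2} \oplus g \colon \mathbf 2 \oplus \alg B \longrightarrow \mathbf 2 \oplus \alg C$ and $h' = \op{id}_{\mathbf 2} \oplus h \colon \mathbf 2 \oplus \alg A \longrightarrow \mathbf 2 \oplus \alg C$. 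By the behaviour of homomorphisms on ordinal sums (Proposition 3.2 in \cite{AglianoMontagna2003}, as already used in Lemma \ref{weaklyprojective}) both are genuine homomorphisms, and $g'$ is surjective because $g$ is. Moreover $g'$ satisfies the zero condition $(g')^{-1}(0) = \{0\}$: the bottom $0$ of $\mathbf 2 \oplus \alg C$ is the bottom of the component $\mathbf 2$, on which $g'$ acts as the identity, while for $x$ in the top component $g'(x) = g(x) \in C$ and $0 \notin C$; hence the only preimage of $0$ is $0$ itself.

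Next I would invoke the hypothesis. Since $\mathbf 2 \oplus \alg A$ is zero-projective in $\vv K$ and the pair $(g', h')$ is an instance of the zero-projectivity diagram inside $\vv K$, there is a homomorphism $f' \colon \mathbf 2 \oplus \alg A \longrightarrow \mathbf 2 \oplus \alg B$ with $h' = g' f'$. The crucial step is then to check that $f'$ restricts to a map $A \longrightarrow B$, i.e. that $f'(a) \neq 0$ for every $a \in A$. This is exactly where the zero condition pays off: for $a \in A$ we have $g'(f'(a)) = h'(a) = h(a) \in C$, so $g'(f'(a)) \neq 0$; were $f'(a) = 0$ we would get $g'(f'(a)) = g'(0) = 0$, a contradiction. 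Hence $f'(a) \in B$ for all $a \in A$.

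Finally I would observe that $\alg A$ sits inside $\mathbf 2 \oplus \alg A$ as its top component, and that on this component all of $\join, \meet, \cdot, \imp$ and $1$ agree with the operations of $\alg A$; thus $\alg A$ is a $\mathsf{CIRL}$-subalgebra of the $0$-free reduct of $\mathbf 2 \oplus \alg A$, and likewise $\alg B$ of $\mathbf 2 \oplus \alg B$. Consequently, letting $f \colon \alg A \longrightarrow \alg B$ be the restriction of $f'$ to $A$, $f$ is a $\mathsf{CIRL}$-homomorphism. To conclude $h = gf$, note that for $a \in A$ the element $f'(a)$ lies in $B$, where $g'$ coincides with $g$, so $g(f(a)) = g'(f'(a)) = h'(a) = h(a)$; this yields projectivity of $\alg A$ in $\vv V$. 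I expect the main obstacle to be precisely the restriction step: one must combine the zero condition $(g')^{-1}(0) = \{0\}$ with the fact that $h$ never takes the value $0$ to ensure that the lift $f'$ does not collapse any element of the top component down into the newly adjoined bottom. This is exactly the point at which \emph{zero}-projectivity of $\mathbf 2 \oplus \alg A$, rather than plain projectivity, is what the argument requires.
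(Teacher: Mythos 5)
Your proof is correct and follows essentially the same route as the paper's: lift $g$ and $h$ to $g' = \op{id}_{\mathbf 2}\oplus g$ and $h' = \op{id}_{\mathbf 2}\oplus h$, verify $(g')^{-1}(0)=\{0\}$, apply zero-projectivity of $\mathbf 2 \oplus \alg A$ to get $f'$, and restrict $f'$ to the top component. The only (harmless) difference is that you verify $f'(a)\neq 0$ pointwise for every $a \in A$, whereas the paper checks it only at the bottom element $0_{\alg A}$ of $\alg A$ and relies on monotonicity of $f'$; your variant is, if anything, marginally more general since it does not presuppose that $\alg A$ is bounded.
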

\begin{proof} Let $\alg B,\alg C \in \vv V$, $h\colon \alg A \longrightarrow \alg C$ and $g\colon \alg B \longrightarrow \alg C$ a surjective homomorphism. Define $h'\colon \mathbf 2 \oplus \alg A \longrightarrow \mathbf 2 \oplus \alg C$ and $g'\colon \mathbf 2 \oplus \alg B \longrightarrow \mathbf 2 \oplus \alg C$ as in the proof of Lemma \ref{weaklyprojective}, i.e. to coincide with, respectively, $h$ on $\alg A$ and $g$ on $\alg B$ and be the identity on $\alg 2$; by definition $g'^{-1}(0) = \{0\}$ and so, since $\mathbf 2 \oplus \alg A$ is zero-projective in $\vv K$, there is a homomorphism $f'\colon \mathbf 2 \oplus \alg A \longrightarrow \mathbf 2 \oplus \alg B$ with $g'f'=h'$.  Notice that $f'(0_{\alg A}) \ge 0_{\alg B}$, indeed if otherwise $f'(0_{\alg A}) = 0$, then $g'f'(0_{\alg A}) = g'(0) = 0 \ne h'(0_\alg A)$; thus $f'(0_{\alg A}) \ge 0_{\alg B}$ which implies that the restriction of $f'$ to $\alg A$, let us call it $f$, is a homomorphism form $\alg A$ to $\alg B$. Since $gf=h$,  $\alg A$ is projective in $\vv V$.
\end{proof}

\section{Heyting algebras}\label{heytingsection}
In this section we apply our general result on ordinal sums to study projectivity in the variety of {\em Heyting algebras}. Heyting algebras are the idempotent algebras in $\mathsf{FL_{ew}}$, and their variety $\mathsf{HA}$ is the equivalent algebraic semantics of {\em Brouwer's Intuitionistic Logic}. Heyting algebras are divisible by Lemma \ref{Heyting}(5) below, and can alternatively be characterized as the class of  $\mathsf{HL}$-algebras satisfying the further equation
$$
xy \app x \meet y,
$$
i.e. $\mathsf{HA} = \mathsf{P_{2}FL_{ew}} = \mathsf{P_{2}HL}$.

The fact that the product and the meet coincide forces many equations to hold in $\mathsf{HA}$; we collect some of them (without proofs) in the following.

\begin{lemma}\label{Heyting}  Let $\alg A$ be a Heyting algebra and $a,b,c \in A$; then
\begin{enumerate}
\item if $a \le c$ and $a \imp b =b$, then $c \imp b=b$;
\item $a \le (a \imp b) \imp b)$;
\item $((a \imp b) \imp b) \imp (a \imp b) = a \imp b$;
\item $(a \imp b) \imp ((a \imp b) \imp b) = ((a \imp b) \imp b)$;
\item if $a \le b$, then $a \meet (b \imp c) = a \meet c$.
\end{enumerate}
\end{lemma}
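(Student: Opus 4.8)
The plan is to obtain all five identities as short consequences of the arithmetic of commutative integral residuated lattices, specialized by the two features that define Heyting algebras: the monoid operation agrees with the meet, $xy \app x \meet y$, and is therefore idempotent, $x \cdot x \app x$. The background facts I would invoke (all valid in any $\mathsf{CIRL}$; see \cite{BlountTsinakis2003}) are residuation in the form $a(a \imp b) \le b$, the order property $b \le a \imp b$, antitonicity of $\imp$ in its first argument together with isotonicity in its second, and the exchange (currying) law $x \imp (y \imp z) \app xy \imp z$. Once these are in place each clause is essentially a one-line computation.

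First I would dispatch (2): since $a \meet (a \imp b) \app a(a \imp b) \le b$, residuation gives $a \le (a \imp b) \imp b$. For (1), the inequality $b \le c \imp b$ holds unconditionally, so it suffices to prove $c \imp b \le b$; but $a \le c$ and antitonicity in the first argument yield $c \imp b \le a \imp b = b$, whence $c \imp b = b$. For (4), writing $u = a \imp b$, idempotency does all the work: currying gives $u \imp (u \imp b) \app (u \cdot u) \imp b \app u \imp b$, which is exactly the claim.

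For (3) I would set $u = a \imp b$ and $v = (a \imp b) \imp b$; clause (2) gives $a \le v$, hence $v \meet a \app a$, and currying then evaluates the whole left-hand side in one step:
\[
v \imp u \app v \imp (a \imp b) \app (v \cdot a) \imp b \app (v \meet a) \imp b \app a \imp b \app u.
\]
For (5) I would instead prove two inequalities. Since $c \le b \imp c$ holds unconditionally, meeting with $a$ gives $a \meet c \le a \meet (b \imp c)$; conversely $a \le b$ together with residuation gives $a \meet (b \imp c) \le b \meet (b \imp c) \app b(b \imp c) \le c$, and combined with $a \meet (b \imp c) \le a$ this forces $a \meet (b \imp c) \le a \meet c$, so the two sides coincide.

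I do not expect a genuine obstacle here: every step collapses to a single application of residuation, currying, monotonicity, or idempotency. The only points demanding attention are keeping straight the directions of monotonicity of $\imp$ (antitone in the first slot, isotone in the second) and remembering to feed clause (2) into the proof of clause (3); this is presumably why the authors record the lemma without proof.
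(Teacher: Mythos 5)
Your proof is correct in every clause. The paper itself states this lemma explicitly \emph{without} proof (the authors only remark that the identities follow from the coincidence of product and meet), so there is no official argument to diverge from; your derivations via residuation, currying, idempotency of the product, and the monotonicity properties of $\imp$ are exactly the standard $\mathsf{CIRL}$ arithmetic the authors implicitly rely on, and each step checks out --- including the two points needing care, namely feeding clause (2) into clause (3) to reduce it to $(v \meet a) \imp b = a \imp b$, and splitting clause (5) into the two inequalities $a \meet c \le a \meet (b \imp c)$ and $a \meet (b \imp c) \le a \meet c$.
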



 Now, every Heyting algebra $\alg A$ is the ordinal sum of sum irreducible Heyting algebras. If $\alg A$ is also finite and projective, then the last component must be $\mathbf 2$,  since by Lemma \ref{finiteprojective} $\alg A$ is also subdirectly irreducible, and subdirectly irreducible Heyting algebra are exactly the algebras with $\alg 2$ as the last component (folklore, see \cite{BuSa}).

 We first show that characterizing the finite sum irreducible Heyting algebras is deceptively easy. We call an element $a$ of a poset $\la P, \le\ra$ a {\em node} if it is a conical element: for all $b \in P$ either $b \le a$ or $a \le b$; for any $a \in P$ the {\em upset} of $a$ is the set $\mathop{\uparrow} a = \{b: a \le b\}$. The proof
of the following lemma is straightforward and we leave it as an exercise.

\begin{lemma}\label{node} Let $\alg A$ be an Heyting algebra and let $a$ be a node. Then:
\begin{enumerate}
\item $\alg A_a = \la (A \setminus \mathop{\uparrow} a) \cup \{1\}, \join,\meet,\imp,0,1\ra$ is a Heyting algebra where
$$
u \join v = \left\{
              \begin{array}{ll}
                1, & \hbox{if $u \join v = a$;} \\
                u \join_\alg A v, & \hbox{ otherwise.}
              \end{array}
            \right.;
$$
\item $\alg A^a = \la \mathop{\uparrow} a, \join,\meet,\cdot,\imp,1\ra$ is a bounded Brouwerian algebra;
\item $\alg A = \alg A_a \oplus \alg A^a$.
\end{enumerate}
\end{lemma}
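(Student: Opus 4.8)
The plan is to prove the three claims in the order (2), (1), (3), obtaining (2) and (1) from the fact that $\mathsf{HA}$ is a variety and establishing (3) by a direct comparison of operations. Throughout I would use the decisive consequence of $a$ being a node, namely that $A$ is the disjoint union $\{b:b<a\}\sqcup\upa a$; in particular $A\setminus\upa a=\{b:b<a\}$, so the underlying set of $\alg A_a$ is $\{b:b<a\}\cup\{1\}$, and every element below the node and every element of $\upa a$ are comparable.

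For (2), first I would check that $\upa a$ is closed under all operations of the zero-free reduct of $\alg A$: it is an up-set, hence closed under $\join$ and, being above the node, under $\meet$; since $\alg A$ is idempotent, $\cdot=\meet$ causes no trouble; and for $u,v\ge a$ we have $u\imp v\ge v\ge a$, so $\upa a$ is closed under $\imp$ as well. Moreover the residual computed inside $\upa a$ agrees with the global one, because $u\imp_\alg A v$ already lies in $\upa a$. Thus $\alg A^a$ is a subalgebra of the zero-free reduct of $\alg A$, hence a commutative, integral, idempotent residuated lattice, i.e.\ a Brouwerian algebra, and it is bounded with least element $a$ and top $1$.

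For (1), I would observe that $\upa a$ is a deductive filter of $\alg A$ and identify $\alg A_a$ with the quotient $\alg A/\th_{\upa a}$. The key computation is that, by residuation and the node property, for $x<a$ one has $x\imp_\alg A y\ge a$ if and only if $x\le y$; from this it follows that $\th_{\upa a}$ collapses all of $\upa a$ to the class of $1$ and separates every pair of distinct elements below $a$. Hence $[x]\mapsto x$ (for $x<a$) together with $[1]\mapsto 1$ is a bijection onto $\{b:b<a\}\cup\{1\}$, and checking the induced operations term by term shows they are exactly those listed for $\alg A_a$; in particular $[x\join_\alg A y]$ equals $1$ precisely when $x\join_\alg A y=a$ (the only way the join of two elements below the node can escape $\{b:b<a\}$), which matches the displayed redefinition of $\join$. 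Being a homomorphic image of a Heyting algebra, $\alg A_a$ is a Heyting algebra.

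Finally, for (3) I would take the evident bijection between $A$ and the underlying set of $\alg A_a\oplus\alg A^a$ (legitimate since the two underlying sets meet only in $1$ and cover $A$) and verify clause by clause that the ordinal-sum operations coincide with those of $\alg A$. The cross-component cases use the node property directly: for $u<a\le v$ one has $u\le v$, so $u\imp_\alg A v=1$ and $u\cdot v=u\meet v=u$, while for $v<a\le u$ the node property (via $a\imp_\alg A v=v$ and Lemma \ref{Heyting}(1)) yields $u\imp_\alg A v=v$, matching the ordinal-sum definition. I expect the genuine obstacle to be the join: the ordinal sum applies here precisely because $\alg A^a$ is bounded, and one must check that when $u,v<a$ with $u\join_\alg A v=a$, the bottom component first sends $u\join v$ to its top $1$ and the ordinal-sum rule then sends it to the bound $a$ of $\alg A^a$, so that the composite recovers $u\join_\alg A v=a$. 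All remaining clauses for $\join$, and the identification of the constant $0$ with the bottom of $\alg A_a$, are routine, completing the verification that $\alg A=\alg A_a\oplus\alg A^a$.
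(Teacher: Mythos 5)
Your proposal is correct. There is nothing in the paper to compare it against: the authors explicitly leave this lemma as an exercise, so yours is a proof where the paper offers none. Your organization is a clean way to discharge that exercise: obtaining (2) by checking that $\upa a$ is closed under all zero-free operations (so $\alg A^a$ is a subalgebra of the Brouwerian reduct, bounded below by $a$), and obtaining (1) by identifying $\alg A_a$ with the quotient $\alg A/\th_{\upa a}$ is particularly economical, since it yields the Heyting axioms for the structure with the redefined join for free, rather than verifying them by hand; the key computation ($x \imp_\alg A y \ge a$ iff $x \le y$ for $x < a$, via residuation and $x \meet a = x$) is exactly what makes the bijection work and correctly explains why the quotient join collapses $x \join_\alg A y = a$ to $1$ and nothing else. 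In (3) you also isolate the two genuinely non-routine points: the cross-component implication $u \imp_\alg A v = v$ for $v < a \le u$ (which indeed follows from $a \imp_\alg A v = v$, a consequence of the node property, together with Lemma \ref{Heyting}(1)), and the join clause where the ordinal sum sends two elements of $A_a$ whose join in $\alg A_a$ is $1$ to the bottom $a$ of $\alg A^a$, recovering $u \join_\alg A v = a$. The remaining clauses are, as you say, routine restrictions, so the verification is complete.
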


\begin{proposition}
	A finite Heyting algebra is sum irreducible if and only if it has no node different from $0,1$.
\end{proposition}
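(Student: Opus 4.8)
The plan is to prove the two directions of the biconditional using Lemma~\ref{node}, which already packages the decomposition machinery. The statement claims that a finite Heyting algebra $\alg A$ is sum irreducible if and only if its only nodes are $0$ and $1$.

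For the forward direction I would argue contrapositively: suppose $\alg A$ has a node $a$ with $a \neq 0$ and $a \neq 1$. By Lemma~\ref{node}(3) we have $\alg A = \alg A_a \oplus \alg A^a$, so it suffices to check that both summands are nontrivial. Since $a \neq 1$, the upset $\mathop{\uparrow} a$ properly contains the case where $\alg A^a = \la \mathop{\uparrow} a, \dots\ra$ has at least the two distinct elements $a$ and $1$ (as $a < 1$), hence $\alg A^a$ is nontrivial. Since $a \neq 0$, there is some element strictly below $a$ (namely $0$, which is not in $\mathop{\uparrow} a$), so $A \setminus \mathop{\uparrow} a$ is nonempty and $\alg A_a$ has more than one element, hence is nontrivial. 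Therefore $\alg A$ is expressed as an ordinal sum of two nontrivial algebras, so it is not sum irreducible.

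For the converse I would again argue contrapositively: suppose $\alg A$ is not sum irreducible, so $\alg A = \alg B \oplus \alg C$ with both $\alg B$ and $\alg C$ nontrivial members of $\mathsf{HL}$. The element that serves as the shared top $1$ of $\alg B$ sitting inside $\alg A$ — equivalently, the bottom element of the upper component $\alg C$ viewed in $\alg A$ — is a node of $\alg A$. Concretely, every element of $\alg B \setminus \{1\}$ lies below every element of $\alg C$ by the definition of the ordinal sum ordering, so the bottom of $\alg C$ is comparable to every element of $\alg A$ and is thus a node. I would then verify it is different from $0$ and $1$: it is not $1$ because $\alg C$ is nontrivial (so $\alg C$ has an element strictly below its top), and it is not $0$ because $\alg B$ is nontrivial (so there are elements of $\alg B$, such as $0_{\alg A}$, strictly below it).

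The main subtlety — though not a deep obstacle — is bookkeeping about which element plays the role of the node and being careful that the decomposition in Lemma~\ref{node} is literally the one realized by an arbitrary sum decomposition. In particular I would want to confirm that an arbitrary ordinal-sum splitting $\alg A = \alg B \oplus \alg C$ arises from a node via Lemma~\ref{node}, i.e.\ that the distinguished boundary element is genuinely conical; this is immediate from the ordinal sum ordering but deserves an explicit sentence. I expect no computation to be required beyond unwinding these definitions, since finiteness guarantees that the relevant bottom elements exist and Lemma~\ref{node} supplies the Heyting structure on the summands.
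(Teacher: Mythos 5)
Your proposal is correct and follows essentially the same route as the paper: the forward direction is the contrapositive application of Lemma~\ref{node}(3), and the converse takes the minimum of the upper component (which exists by finiteness) as the node distinct from $0$ and $1$. The only slip is your parenthetical claim that the bottom of $\alg C$ is ``equivalently'' the shared top of $\alg B$ --- in the paper's construction $\alg B \cap \alg C = \{1\}$, so the top of $\alg B$ is $1$ itself, not the bottom of $\alg C$ --- but your actual argument never uses this identification and is unaffected.
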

\begin{proof}  If there is a node different from $0,1$ then the algebra is sum reducible by Lemma \ref{node}. Vice versa if $\alg A$ is finite and sum reducible, i.e. $\alg A = \alg B \oplus \alg C$ nontrivially, then the minimum $c \in C$ is a node of $\alg A$ different from $0,1$.
\end{proof}

Since in a totally ordered Heyting algebra every element is a node, from Lemma \ref{node} we also obtain the following.

\begin{proposition}
 A totally ordered Heyting algebra is an ordinal sum of copies of $\mathbf 2$.
\end{proposition}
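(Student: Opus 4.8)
The plan is to derive the statement from the paper's decomposition of any $\mathsf{CIRL}$ into sum irreducibles, combined with a classification of the sum irreducible members that happen to be totally ordered Heyting algebras: I claim these are exactly the copies of $\mathbf 2$. The engine for the classification is Lemma~\ref{node}, activated by the trivial remark announced just above the statement, namely that in a totally ordered algebra every element is a node (for any $a$ and any $b$, one of $a \le b$, $b \le a$ holds by definition of a chain).

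First I would record the classification: a totally ordered Heyting algebra $\alg A$ is sum irreducible if and only if $|A| \le 2$. For the nontrivial direction, suppose $\alg A$ has an element $a$ with $0 < a < 1$. Then $a$ is a node with $a \ne 0,1$, so $\upa a \ne \{1\}$ and $A \setminus \upa a \ne \emptyset$; Lemma~\ref{node} then exhibits $\alg A = \alg A_a \oplus \alg A^a$ as a nontrivial ordinal sum, contradicting sum irreducibility. Hence a sum irreducible totally ordered Heyting algebra has at most two elements, and the only nontrivial such algebra is $\mathbf 2$. (This is the totally ordered, possibly infinite, analogue of the finite sum-irreducibility criterion proved just above.) Next I would invoke the structural result that every algebra in $\mathsf{CIRL}$ is an ordinal sum of sum irreducible algebras. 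Applying it to $\alg A$ gives $\alg A = \bigoplus_{i \in I} \alg C_i$ with each $\alg C_i$ sum irreducible; since $\alg A$ is a chain, each component $\alg C_i$ is totally ordered as well, so by the classification each nontrivial $\alg C_i$ is $\mathbf 2$, and $\alg A$ is an ordinal sum of copies of $\mathbf 2$.

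The main obstacle — really a point needing a careful word rather than a hard argument — is that Lemma~\ref{node} is phrased for honest Heyting algebras, i.e.\ structures carrying the constant $0$, whereas in an ordinal sum $\bigoplus_{i \in I} \alg C_i$ only the bottom summand retains $0$ and the summands $\alg C_i$ with $i > 0$ are their zero-free reducts (totally ordered idempotent $\mathsf{CIRL}$s). This is cosmetic: the node-splitting construction of Lemma~\ref{node} does not use $0$ beyond carrying it along in the signature, so the same splitting shows that any nontrivial sum irreducible chain in $\mathsf{CIRL}$ has two elements, and a two-element chain is $\mathbf 2$ (respectively its zero-free reduct) in either signature.

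Should one prefer to avoid the appeal to the abstract sum-irreducible decomposition altogether, an equivalent and completely self-contained route is to verify directly that the identity map on $A$ realizes $\alg A$ as $\bigoplus_{a \in A \setminus \{1\}} \mathbf 2$: on a Heyting chain one has $x \meet y = x \cdot y = \min(x,y)$, $x \join y = \max(x,y)$, and $x \imp y = 1$ if $x \le y$ while $x \imp y = y$ otherwise, and these are precisely the operations assigned by the ordinal sum of copies of $\mathbf 2$, so the verification is a routine check against the formulas of Section~\ref{section:ordinalsums}.
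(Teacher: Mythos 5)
Your proof is correct in substance and is essentially the paper's own argument: the paper obtains this proposition from Lemma \ref{node} together with the remark that every element of a chain is a node, and both of your routes (decomposition into sum irreducibles, and the direct verification that $\alg A = \bigoplus_{a \in A\setminus\{1\}} \mathbf 2$ via $\min$, $\max$, and $x \imp y \in \{1,y\}$) are faithful elaborations of that idea; the direct verification is the cleaner of the two and handles infinite, even densely ordered, chains with no extra care.

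However, one assertion in your bridging paragraph is false as written and should be repaired: it is not true that ``any nontrivial sum irreducible chain in $\mathsf{CIRL}$ has two elements.'' The three-element Wajsberg chain $\alg{\L}_3$ (or the standard \L{}ukasiewicz algebra on $[0,1]$, or any nontrivial cancellative hoop chain) is totally ordered and sum irreducible yet has more than two elements. The real obstacle to transferring Lemma \ref{node} beyond Heyting algebras is not the constant $0$ but idempotency: the node-splitting needs $\upa a$ to be closed under the product and needs $xy = x$ whenever $x < a \le y$, and both hold in a Heyting algebra precisely because $xy = x \meet y$, whereas in $\alg{\L}_3$ the square of the middle element drops below it and the splitting collapses. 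So the classification your first route actually requires is: every nontrivial sum irreducible \emph{idempotent} chain in $\mathsf{CIRL}$ is (the zero-free reduct of) $\mathbf 2$. Since the components of your decomposition of a Heyting chain inherit the identity $x^2 \approx x$, inserting this one qualifier costs nothing and makes the first route complete; your second route needs no change.
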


Every finite Boolean algebra is a sum irreducible Heyting algebra;  but since every finite distributive lattice can be given the structure of an Heyting algebra, there are many non-Boolean sum irreducible Heyting algebras. However the list becomes very short if we consider  sum irreducible Heyting algebras that are irreducible components of a finite projective Heyting algebra. We are now going to show that, indeed, the only possible components are $\alg 2$ and $\alg 4$. In order to use Lemma \ref{weaklyprojective}, we start by studying finite zero-projective Heyting algebras.

In the following theorem we will make use of the free Heyting algebra on one generator $\alg F_\mathsf{HA}(x)$, also called the {\em Nishimura lattice}, that is an infinite Heyting algebra totally described in \cite{Nishimura1960}. In Figure \ref{Nishimura} we see a picture of the bottom of the lattice. Moreover, we will denote by $\mathbf 3$ and $\mathbf 4$ the three and four elements Heyting algebras respectively, where $\mathbf 3 \cong \mathbf 2 \oplus \mathbf 2$.

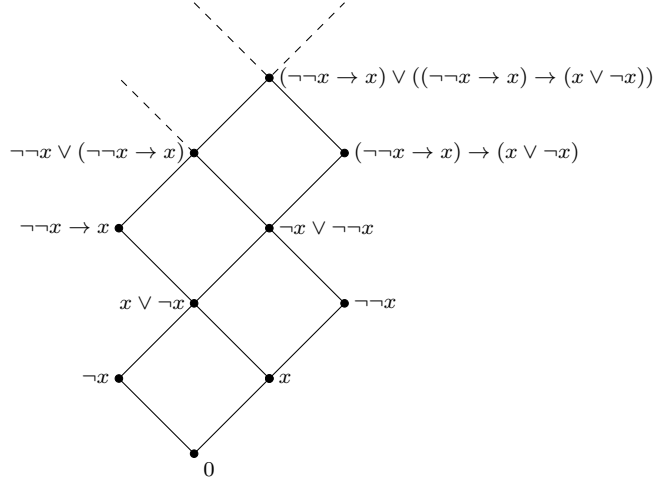
\begin{figure}[htbp]
\begin{center}
\begin{tikzpicture}
\draw (1,3)-- (2,4) -- (1,5) -- (0,4) -- (-1,3) -- (0,2) -- (1,3) -- (0,4) -- (-1,5) -- (0,6) -- (1,5) --(2,6) -- (1,7) -- (0,6);
\draw[dashed] (0,6) -- (-1,7);
\draw[dashed] (0,8) -- (1,7) -- (2,8);
\draw[fill] (1,3) circle [radius=0.05];
\draw[fill] (2,4) circle [radius=0.05];
\draw[fill] (1,5) circle [radius=0.05];
\draw[fill] (0,4) circle [radius=0.05];
\draw[fill] (-1,3) circle [radius=0.05];
\draw[fill] (0,2) circle [radius=0.05];
\draw[fill] (1,3) circle [radius=0.05];
\draw[fill] (-1,5) circle [radius=0.05];
\draw[fill] (0,6) circle [radius=0.05];
\draw[fill] (2,6) circle [radius=0.05];
\draw[fill] (1,7) circle [radius=0.05];
\node[right] at (0,1.8) {\footnotesize $0$};
\node[right] at (1,3) {\footnotesize $x$};
\node[left] at (-1,3) {\footnotesize $\neg x$};
\node[left] at (0,4) {\footnotesize $x \join \neg x$};
\node[right] at (2,4) {\footnotesize $\neg\neg x$};
\node[left]  at (-1,5) {\footnotesize $\neg\neg x \imp x$};
\node[right] at (1,5) {\footnotesize $\neg x \join \neg\neg x$};
\node[left]  at (0,6) {\footnotesize $\neg\neg x \join (\neg\neg x \imp x)$};
\node[right] at (2,6) {\footnotesize $(\neg\neg x \imp x) \imp (x \join \neg x)$};
\node[right] at (1,7) {\footnotesize $(\neg\neg x \imp x) \join ((\neg\neg x \imp x) \imp (x \join \neg x))$};
\end{tikzpicture}
\end{center}
\caption{The bottom of the Nishimura lattice\label{Nishimura}}
\end{figure}

\begin{theorem}\label{twoatoms} Let $\alg A$ be a finite zero-projective Heyting algebra; if $\alg A$ is sum irreducible and has exactly two atoms
then $\alg A \cong \mathbf 4$.
\end{theorem}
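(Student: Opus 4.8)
The plan is to separate the argument into an easy structural reduction and a harder lifting step. \emph{The reduction:} it suffices to prove that the join $a \join b$ of the two atoms equals $1$. Indeed, since $a,b$ are distinct atoms they are incomparable, so $a \meet b \le a$ forces $a \meet b = 0$; hence if $a \join b = 1$, then $b$ is a complement of $a$ (unique, by distributivity). A complemented element of a Heyting algebra yields a direct product decomposition $\alg A \cong \mathord{\downarrow} a \times \mathord{\downarrow} b$ via $x \mapsto (x \meet a, x \meet b)$, and since $a,b$ are atoms each factor is $\{0,a\} \cong \mathbf 2$, respectively $\{0,b\} \cong \mathbf 2$. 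Thus $\alg A \cong \mathbf 2 \times \mathbf 2 \cong \mathbf 4$, as desired. This direction uses neither sum irreducibility nor zero-projectivity.

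Next I would reformulate the remaining goal. The element $a \join b$ is \emph{always dense}: its pseudocomplement is $\neg(a \join b) = \neg a \meet \neg b$, and any nonzero element of a finite Heyting algebra lies above an atom; since $a \meet \neg a = b \meet \neg b = 0$, neither atom lies below $\neg a \meet \neg b$, so $\neg a \meet \neg b = 0$. Consequently, assuming $a \join b < 1$ is exactly assuming that $\alg A$ has a \emph{proper dense element}, namely $a \join b$. The heart of the proof is therefore to show that a finite, sum irreducible, zero-projective Heyting algebra with two atoms cannot carry such a proper dense element; this is where sum irreducibility and zero-projectivity must both be spent.

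To rule out a proper dense element, the strategy is to contradict zero-projectivity by exhibiting a homomorphism $h \colon \alg A \longrightarrow \alg C$ and a surjection $g \colon \alg B \longrightarrow \alg C$ with $g^{-1}(0) = \{0\}$ that admits no lift $f$ with $h = gf$. A natural target $\alg C$ is a small non-Boolean quotient witnessing the density: using the two atoms one has a surjection onto a Boolean quotient collapsing the dense filter, so the images of $a,b$ join to the top while $a \join b$ itself remains proper below. The role of the free Heyting algebra on one generator (the Nishimura lattice) is to supply the one-generated gadgets: its lower part records precisely how double negation interacts with the two atoms $x, \neg x$, which satisfy $x \join \neg x < 1$, and from it one reads off the zero-reflecting covers needed to block the lift by forcing the preimage of the dense element to be regular. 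A lift $f$ would then force $a \join b$ to be regular in $\alg A$, contradicting that it is a proper dense element.

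\emph{The main obstacle} is engineering the cover $g$ so that it is simultaneously zero-reflecting (as zero-projectivity demands, $g^{-1}(0) = \{0\}$, which forbids collapsing anything at the bottom) and genuinely non-splitting on the chosen diagram. The zero-reflecting constraint is delicate: it rules out the naive candidates, such as the four-element Gödel chain surjecting onto the three-element chain, since those covers in fact \emph{do} split and hence lift; the obstruction must instead act near the atoms, through the double-negation structure, while leaving the bottom fibre a singleton. Once such a non-splitting zero-reflecting cover is produced, zero-projectivity of $\alg A$ is violated; therefore $\alg A$ has no proper dense element, so $a \join b = 1$, and the reduction of the first paragraph gives $\alg A \cong \mathbf 4$.
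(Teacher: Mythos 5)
Your first two paragraphs are correct: the reduction to proving $a \join b = 1$ works (your complement-based decomposition $x \mapsto (x \meet a, x \meet b)$, giving $\alg A \cong \mathbf 2 \times \mathbf 2$, is a clean alternative to the paper's remark that any extra element would create a sublattice $\alg N_5$), and the observation that $a \join b$ is dense is also fine. But from that point on you have a statement of strategy, not a proof, and the gap is exactly where the theorem lives. You never exhibit the target $\alg C$, the homomorphism $h$, or the zero-reflecting cover $g$; you explicitly label the construction of $g$ ``the main obstacle'' and then leave it unresolved. Worse, the mechanism you gesture at --- that a lift would ``force $a \join b$ to be regular'' --- is unsubstantiated and appears false as stated: in $\alg F_{\mathsf{HA}}(x)$ the element $x \join \neg x$ is itself a proper dense element (since $\neg x \meet \neg\neg x = 0$), so nothing prevents a lift from sending the dense element $a \join b$ to a dense element; no regularity constraint can arise this way. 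Finally, sum irreducibility, which you correctly flag as something that ``must be spent,'' is never actually used anywhere in your argument.

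For comparison, here is how the paper closes the gap. Since $a \join b < 1$ and $\alg A$ is sum irreducible (no node other than $0,1$), there is an element $d$ covering $a$ and incomparable with $a \join b$; the quotient of $\alg A$ by the congruence of the filter $\mathop{\uparrow} d$ is $\mathbf 3 = \{0,u,1\}$ with $a \mapsto u$, while the quotient by the maximal filter $\mathop{\uparrow} b$ is $\mathbf 2$ with $a \mapsto 0$. Pairing these gives an onto homomorphism $h \colon \alg A \longrightarrow \mathbf 3 \times \mathbf 2$ with $h(a) = (u,0)$, and $(u,0)$ generates $\mathbf 3 \times \mathbf 2$. The cover is $f \colon \alg F_{\mathsf{HA}}(x) \longrightarrow \mathbf 3 \times \mathbf 2$ determined by $x \mapsto (u,0)$: computing $f(\neg x)$, $f(\neg\neg x)$, $f(x \join \neg x)$ on the bottom of the Nishimura lattice shows $f^{-1}(0,0) = \{0\}$ (so zero-projectivity applies) and $f^{-1}(u,0) = \{x\}$. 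Any lift $g$ with $fg = h$ must then satisfy $g(a) = x$, so $g$ is onto the Nishimura lattice; since that lattice is infinite and $\alg A$ is finite, this is a contradiction. Note that the contradiction is a cardinality argument --- a finite algebra cannot surject onto an infinite one --- not a regularity argument, and sum irreducibility is spent precisely to guarantee $h_0(a) = u$, i.e.\ that $a$ survives as the middle element of the $\mathbf 3$-quotient. Without that, $h$ could not pin $g(a)$ down to the generator $x$, and no obstruction would follow.
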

\begin{proof} If $a,b$ are the only atoms of $\alg A$ it is enough to show that $a \join b =1$. Indeed, there cannot be elements below or incomparable to $a, b$ since they are the only atoms, and there cannot be elements above either of them or otherwise the lattice would not be distributive (there would be a sublattice isomorphic to $\alg N_{5}$). Suppose then  that $a \join b < 1$ in $\alg A$; since $\alg A$ is sum irreducible it cannot contain any node different from $0$ or $1$, so there has to be an element $d \in A$ covering $a$ incomparable with $ a \lor b$ (or the symmetric case). Let $F$ be the filter generated by $d$, $F = \mathop{\uparrow} d$,
 then it can be checked (see Lemma 4.7 in \cite{BalbesHorn1970}) that, if $\th$ is the congruence associated with $F$, then $\alg A /\th \cong \mathbf 3$.
In particular, if we denote by $\{0,u,1\}$ the universe of $\mathbf 3$, there is a onto homomorphism $h_0\colon \alg A \longrightarrow\mathbf 3$ with $h_0(a) = u$. Similarly, since $b$ is an atom (and hence the principal filter generated by $b$ is maximal) there is a onto homomorphism $h_1 \colon\alg A \longrightarrow \mathbf 2$ with $h_1(a) =0$. It follows that if we define $h(y) = (h_0(y),h_1(y))$ then $h\colon \alg A \longrightarrow \mathbf 3 \times \mathbf 2$ is a onto homomorphism and moreover $(u,0)$ generates $\mathbf 3 \times \mathbf 2$ (see Figure \ref{3times2}).

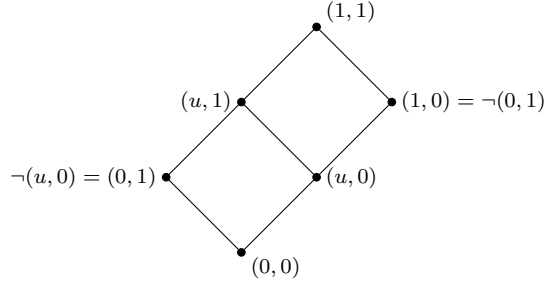
\begin{figure}[htbp]
\begin{center}
\begin{tikzpicture}
\draw (1,3)-- (2,4) -- (1,5) -- (0,4) -- (-1,3) -- (0,2) -- (1,3) -- (0,4);
\draw[fill] (1,3) circle [radius=0.05];
\draw[fill] (2,4) circle [radius=0.05];
\draw[fill] (1,5) circle [radius=0.05];
\draw[fill] (0,4) circle [radius=0.05];
\draw[fill] (-1,3) circle [radius=0.05];
\draw[fill] (0,2) circle [radius=0.05];
\node[right] at (0,1.8) {\footnotesize $(0,0)$};
\node[right] at (1,3) {\footnotesize $(u,0)$};
\node[left] at (-1,3) {\footnotesize $\neg(u,0) = (0,1)$};
\node[left] at (0,4) {\footnotesize $(u,1)$};
\node[right] at (2,4) {\footnotesize $(1,0)=\neg(0,1)$};
\node[right] at (1,5.2) {\footnotesize $(1,1)$};
\end{tikzpicture}
\end{center}
\caption{The algebra $\mathbf 3 \times \mathbf 2$\label{3times2}}
\end{figure}

Let now $f$ be the onto homomorphism from the Nishimura lattice $\alg F_\mathsf{HA}(x)$ to $\mathbf 3 \times \mathbf 2$ defined by $f(x) = (u,0)$. Observe that
\begin{align*}
&f(\neg x) = \neg(u,0) = (0,1)\\
&f(\neg\neg x) = \neg\neg(u,0) = (1,0)\\
& f(x \join \neg x) = (u,0) \join \neg(u,0) = (u,0) \join (0,1) = (u,1).
\end{align*}
Thus, by looking at Figure \ref{Nishimura}, by order preservation $f^{-1}(0,0) = \{0\}$ and $f^{-1}(u,0) = \{x\}$.
Since $\alg A$ is zero-projective, there is a homomorphism $g\colon \alg A \longrightarrow \alg F_\mathsf{HA}(x)$ with $fg = h$. Now, $fg( a) = h( a) = (h_0(a),h_1( a)) = (u,0)$; since $f^{-1}(u,0) = \{x\}$, it means that $g(a) = x$. Since $x$ generates $\alg F_\mathsf{HA}(x)$, the map $g$ is onto; but the Nishimura lattice is infinite and  $\alg A$ is finite, a contradiction. Hence the thesis follows.
\end{proof}

It follows by Lemma \ref{Heyting}(5) that Heyting algebras are {\em pseudocomplemented}, that is, they satisfy the identity $x \land \neg x \approx 0$. Therefore, we can use the following fact. Given an algebra $\alg A$, let $R(\alg A)$ be the set of its {\em regular} elements, that is, elements $x \in A$ such that $\neg\neg x = x$.
\begin{theorem}\cite{Castanoetal2011}\label{finitefree} Let $\vv V$ be a subvariety of $\mathsf{FL_{ew}}$ and let $\alg F_\vv V(X)$ be the free algebra in $\vv V$ on $X$. If $\vv V$ is pseudocomplemented, i.e., each algebra in $\vv V$ is pseudocomplemented, then $R(\alg F_\vv V(X)) \cong \alg F_\vv B(\neg\neg X)$, the free Boolean algebra on the set $\neg\neg X = \{\neg\neg x: x \in X\}$. 
\end{theorem}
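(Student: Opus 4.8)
The plan is to exhibit $R(\alg F_\vv V(X))$, equipped with its natural Boolean structure, as the free Boolean algebra on the set $\neg\neg X$, by verifying the corresponding universal mapping property. Two things must be established: that $\neg\neg X$ \emph{generates} the Boolean algebra $R(\alg F_\vv V(X))$, and that it does so \emph{freely}.

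First I would record how double negation behaves. In any pseudocomplemented $\alg A\in\mathsf{FL_{ew}}$ the map $\neg\neg$ is a nucleus, and by the classical Glivenko--Frink argument its image $R(\alg A)$ is a Boolean algebra under $\meet$, the regularized join $a\join_R b=\neg(\neg a\meet\neg b)$ and $\neg$. The crux is to check that $\neg\neg\colon\alg A\longrightarrow R(\alg A)$ is in fact a \emph{surjective homomorphism} onto this Boolean algebra, i.e.\ that it sends each $\mathsf{FL_{ew}}$ operation to the corresponding Boolean one. Surjectivity is clear since $a=\neg\neg a$ for $a$ regular, and the cases of $\meet$ and $\join$ are the standard identities $\neg\neg(x\meet y)=\neg\neg x\meet\neg\neg y$ and $\neg(x\join y)=\neg x\meet\neg y$. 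The two genuine cases, $\cdot$ and $\imp$, both rest on the following consequence of pseudocomplementation: if $w^2=0$ then $w\le\neg w$, so $w=w\meet\neg w=0$, and hence $w^n=0$ forces $w=0$. From $(x\meet y)^2\le xy\le x\meet y$ together with $\neg(z^2)=\neg z$ one gets $\neg(xy)=\neg(x\meet y)$, whence $\neg\neg(x\cdot y)=\neg\neg x\meet\neg\neg y$; and a short residuation computation (using $(r\imp s)r\le s$ and $\neg\neg r\meet\neg r=0$) yields $\neg(r\imp s)=\neg\neg r\meet\neg s$, so that $\neg\neg(r\imp s)=\neg\neg r\imp_R\neg\neg s$, the Boolean implication of the regularizations.

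Granting that $\neg\neg$ is such a homomorphism, generation is immediate: every element of $\alg F_\vv V(X)$ is $t(\vuc xn)$ for a term $t$ in finitely many generators, so every regular element equals $\neg\neg t(\vuc xn)$, and applying the homomorphism $\neg\neg$ writes it as a Boolean combination of the $\neg\neg x_i$. For freeness I would use that every Boolean algebra belongs to $\vv V$ (Boolean algebras form the variety $\HH\SU\PP(\alg 2)$ and $\alg 2\in\vv V$, being the free algebra on $\emptyset$): given a Boolean algebra $\B$ and an arbitrary map $\psi_0\colon\neg\neg X\longrightarrow\B$, put $\psi(x)=\psi_0(\neg\neg x)$ and extend by freeness of $\alg F_\vv V(X)$ to a $\vv V$-homomorphism $\Phi\colon\alg F_\vv V(X)\longrightarrow\B$. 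Since $\B=R(\B)$, the restriction of $\Phi$ to the regular elements is a Boolean homomorphism $R(\alg F_\vv V(X))\longrightarrow\B$ extending $\psi_0$ (preservation of $\join_R$ uses $\neg\neg=\op{id}$ on $\B$), and it is the unique such map by generation. Thus $R(\alg F_\vv V(X))$ enjoys the universal property of $\alg F_\vv B(\neg\neg X)$, which gives the asserted isomorphism.

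I expect the implication identity $\neg\neg(r\imp s)=\neg\neg r\imp_R\neg\neg s$ to be the main obstacle: unlike the meet and product cases it is not a bare monotonicity argument and genuinely needs the interplay of residuation with pseudocomplementation (the vanishing of nilpotents, $w^n=0\Rightarrow w=0$). Once the homomorphism property of $\neg\neg$ is secured, the generation and freeness steps are routine.
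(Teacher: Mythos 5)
Your proposal cannot be compared with an internal argument, because the paper does not prove Theorem \ref{finitefree} at all: it is an imported result, stated with a citation to \cite{Castanoetal2011}, and used as a black box in the proof of Theorem \ref{nothree}. Judged on its own, your blind proof is correct and essentially complete: the universal-property strategy, with the Glivenko-type fact that $\neg\neg\colon\alg F_\vv V(X)\longrightarrow R(\alg F_\vv V(X))$ is a surjective homomorphism onto the Boolean algebra of regular elements as the crux, is a sound route to the theorem, and it is in the spirit of the analysis in the cited reference.

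One point deserves repair, though it is a matter of attribution rather than a gap. You list $\neg\neg(x\meet y)=\neg\neg x\meet\neg\neg y$ together with $\neg(x\join y)=\neg x\meet\neg y$ as ``standard identities''. The join law does hold in every $\mathsf{FL_{ew}}$-algebra, but the meet law does not: take the lattice $0<a,b<c<1$ with $a,b$ incomparable, $a\join b=c$, and define $xy=0$ whenever $x,y\le c$; this is an $\mathsf{FL_{ew}}$-algebra in which $\neg a=\neg b=\neg c=c$, so $\neg\neg(a\meet b)=\neg\neg 0=0$ while $\neg\neg a\meet\neg\neg b=c$. So the meet law needs pseudocomplementation exactly as much as your product and implication laws do. It is true in your setting, and provable by your own nilpotency device: since $\neg\neg x\cdot\neg\neg y\le\neg\neg(xy)$ holds universally, one gets $(\neg\neg x\meet\neg\neg y\meet\neg(x\meet y))^{3}\le\neg\neg x\cdot\neg\neg y\cdot\neg(xy)\le\neg\neg(xy)\cdot\neg(xy)=0$, and nilpotents vanish; alternatively it follows from Frink's theorem on pseudocomplemented meet-semilattices, because under $x\meet\neg x\approx 0$ the element $\neg x$ is the lattice pseudocomplement of $x$ (if $x\meet z=0$ then $xz\le x\meet z=0$, so $z\le\neg x$). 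With that point fixed, the rest checks out as you sketched it: $\neg(z^{2})=\neg z$ and $(x\meet y)^{2}\le xy\le x\meet y$ give the product law; $(r\imp s)\,\neg s\,r\le s\,\neg s=0$ gives $(r\imp s)\,\neg s\le\neg r$, whence $(\neg\neg r\meet\neg s\meet(r\imp s))^{3}=0$ and so $\neg(r\imp s)=\neg\neg r\meet\neg s$, the reverse inequality being immediate; generation is as you describe; and the freeness step legitimately uses that every Boolean algebra lies in $\vv V$ (since $\alg 2\in\vv V$ for nontrivial $\vv V$) and that $\neg\neg$ is the identity on $\B$, with uniqueness following from generation.
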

\begin{theorem}\label{nothree} Let $\alg A$ be a finite zero-projective Heyting algebra; then $\alg A$ has at most two atoms.
\end{theorem}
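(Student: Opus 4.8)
The plan is to argue by contradiction: assuming $\alg A$ has at least three atoms $a_1,a_2,a_3$, I would build a diagram witnessing a failure of zero-projectivity, exploiting two features of the free Heyting algebra on one generator $\alg F_\mathsf{HA}(x)$ (the Nishimura lattice). By Theorem \ref{finitefree} its regular elements form the free Boolean algebra on one generator, i.e. $R(\alg F_\mathsf{HA}(x))\cong\mathbf 4$; and, by the classical fact that free Heyting algebras are directly indecomposable, its only complemented elements are $0$ and $1$. Both $\mathbf 4$ and $\mathbf 3\times\mathbf 2$ are zero-reflecting quotients of $\alg F_\mathsf{HA}(x)$ (their regular parts are exactly $\mathbf 4$), which is what allows zero-projectivity to be applied with $\alg B=\alg F_\mathsf{HA}(x)$. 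As motivation I first note that the three atoms give pairwise disjoint nonzero regular elements, since $\neg\neg a_i\meet\neg\neg a_j=\neg\neg(a_i\meet a_j)=0$, so that $R(\alg A)$ has at least three atoms; this is the structural surplus I want to turn into a contradiction.

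I would split into two cases according to the Boolean center of $\alg A$. Suppose first that $\alg A$ has a complemented element $z\neq 0,1$ (this covers in particular every directly decomposable $\alg A$, such as a Boolean algebra like $\mathbf 2^3$). Writing $\alg A\cong{\downarrow}z\times{\downarrow}\neg z$ and composing the two projections with surjections onto $\mathbf 2$, I obtain an onto map $h:\alg A\to\mathbf 4$ with $h(z)=(1,0)$. Taking the zero-reflecting surjection $g:\alg F_\mathsf{HA}(x)\to\mathbf 4$ determined by $g(x)=(1,0)$, zero-projectivity furnishes a lift $f:\alg A\to\alg F_\mathsf{HA}(x)$ with $gf=h$. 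Now $f(z)$ is complemented in $\alg F_\mathsf{HA}(x)$ (because $z$ is), hence $f(z)\in\{0,1\}$ by direct indecomposability; but then $h(z)=gf(z)\in\{g(0),g(1)\}=\{0,1\}$, contradicting $h(z)=(1,0)$.

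In the complementary case $\alg A$ is directly indecomposable, hence non-Boolean (a directly indecomposable Boolean algebra is $\mathbf 2$, with a single atom), and here I would reproduce the mechanism of Theorem \ref{twoatoms}. Using the third atom, for $a_1$ I can choose an element $d$ covering $a_1$ with $d\le a_1\join a_3$; since $(a_1\join a_2)\meet(a_1\join a_3)=a_1$ by distributivity, such a $d$ is incomparable with $a_1\join a_2$. This is exactly the configuration exploited in Theorem \ref{twoatoms}, except that $d$ is now produced from the third atom instead of from sum-irreducibility, and it should yield a surjection $h_0:\alg A\to\mathbf 3$ with $h_0(a_1)=u$ (the step needing care, see below). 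Combining $h_0$ with the surjection $h_1:\alg A\to\mathbf 2$ coming from the maximal filter ${\uparrow}a_2$, for which $h_1(a_1)=0$ since $a_1\le\neg a_2$, gives an onto map $h=(h_0,h_1):\alg A\to\mathbf 3\times\mathbf 2$ with $h(a_1)=(u,0)$. As in Theorem \ref{twoatoms}, the zero-reflecting surjection $f'\colon\alg F_\mathsf{HA}(x)\to\mathbf 3\times\mathbf 2$ with $f'(x)=(u,0)$ satisfies $(f')^{-1}(u,0)=\{x\}$, so any lift $g:\alg A\to\alg F_\mathsf{HA}(x)$ with $f'g=h$ forces $g(a_1)=x$; hence $g$ is onto, contradicting the finiteness of $\alg A$.

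The step I expect to be the main obstacle is the construction of $h_0$ in the directly indecomposable case: verifying that the filter ${\uparrow}d$, or a suitable further quotient of $\alg A/{\uparrow}d$, collapses $\alg A$ onto the three-element chain while sending $a_1$ to the middle element, now that neither the two-atom hypothesis nor the sum-irreducibility of Theorem \ref{twoatoms} is available. I would handle this by the same Balbes--Horn computation cited there, checking directly that $a_1\not\equiv 0$ and $a_1\not\equiv 1$ modulo ${\uparrow}d$ and then projecting onto the chain through $[a_1]$; the disjointness relations $a_i\meet a_j=0$ together with the identity $(a_1\join a_2)\meet(a_1\join a_3)=a_1$ are what keep $a_1$ strictly between $0$ and $1$ throughout.
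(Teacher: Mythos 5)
Your case (a) is correct, but case (b) --- the only place where the hypothesis of three atoms is actually used --- has a genuine gap, so the proposal does not prove the theorem. The problem is exactly the step you flagged: the homomorphism $h_0:\alg A\to\mathbf 3$ with $h_0(a_1)=u$ need not exist. In a finite Heyting algebra every congruence comes from a principal filter $\mathop{\uparrow}e$, and $\alg A/\mathop{\uparrow}e\cong\mathop{\downarrow}e$ via $x\mapsto x\meet e$; since any homomorphism to $\mathbf 3$ hitting $u$ must be onto, such a map sends $a_1$ to the middle element if and only if $\mathop{\downarrow}e=\{0,a_1,e\}$, i.e.\ if and only if $a_1$ has a cover containing no other atom of $\alg A$. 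This can fail for \emph{every} atom of a directly indecomposable algebra with three atoms: take $\alg A=\mathbf 2^3\oplus\mathbf 2$ (the eight-element Boolean algebra with a new top). It is directly indecomposable, so your case (a) does not apply; it has three atoms; and every cover of an atom $a_i$ is of the form $a_i\join a_j$, hence contains a second atom. So no homomorphism from this $\alg A$ to $\mathbf 3$ sends any atom to $u$, your $h_0$ does not exist, and no ``further quotient of $\alg A/\mathop{\uparrow}d$'' can repair this, because quotients of $\mathop{\downarrow}d$ are again of the form $\mathop{\downarrow}e$ and the same obstruction applies. Retreating to the target $\mathbf 4$ (which is what $\mathop{\uparrow}(a_1\join a_3)$ actually yields here) does not help either: the fibre of $(1,0)$ under $g:\alg F_\mathsf{HA}(x)\to\mathbf 4$ is $\{x,\neg\neg x\}$, and a lift sending $a_1$ to $\neg\neg x$ can land in the \emph{finite} subalgebra $\{0,\neg x,\neg\neg x,\neg x\join\neg\neg x,1\}$ of the Nishimura lattice, so finiteness of $\alg A$ produces no contradiction. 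In short, one-generated targets cannot see this algebra, and this is precisely why the paper's proof works in $\alg F=\alg F_\mathsf{HA}(x,y)$: it uses the zero-reflecting Glivenko surjection $\neg\neg:\alg F\to R(\alg F)$ onto the free Boolean algebra with atoms $b_1,\dots,b_4$ (Theorem \ref{finitefree}), sends the atoms of $\alg A$ to a partition of unity whose blocks include $b_1$ and $b_2$ (with three or more atoms one can take $a_1\mapsto b_1$, $a_2\mapsto b_2$, $a_3\mapsto b_3\join b_4$, and the remaining atoms to $0$), lifts by zero-projectivity, and concludes because the image of the lift contains the subalgebra of $\alg F$ generated by $b_1,b_2$, which is infinite by Balbes--Horn. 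That argument is completely insensitive to the covering structure of $\alg A$, which is what your case (b) depends on.

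Two further remarks. First, your reduction is sound as far as it goes: case (a) correctly shows, from the disjunction property (the top of a free Heyting algebra is join-irreducible, so its only complemented elements are $0$ and $1$) and the zero-reflecting surjection $\alg F_\mathsf{HA}(x)\to\mathbf 4$, $x\mapsto(1,0)$, that a finite zero-projective Heyting algebra is directly indecomposable; this is not in the paper and is a genuinely different (and correct) argument. Second, be aware that this conclusion proves more than the theorem and sits badly with the paper's own framework: it implies that $\mathbf 4$ is \emph{not} zero-projective, whereas Lemma \ref{weaklyprojective} together with Theorem \ref{projectiveHA} (applied to the projective algebra $\mathbf 4\oplus\mathbf 2$) would make $\mathbf 4$ zero-projective. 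The tension is not in your argument: in the proof of Lemma \ref{weaklyprojective} the restriction of $f'$ to a component $\alg A_i$ need not preserve joins, since $A_i$ is not a sublattice of the ordinal sum when $1$ fails to be join-irreducible in $\alg A_i$ --- which is exactly the case $\alg A_i=\mathbf 4$. So if you pursue your approach, you should flag this discrepancy explicitly rather than rely on either statement.
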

\begin{proof}  Let $\alg F = \alg F_\mathsf{HA}(x,y)$ be the free Heyting algebra on two generators; by Theorem \ref{finitefree} $R(\alg F)$ is the free Boolean algebra generated by the atoms
$$
b_1 =\neg\neg x \meet \neg\neg y\quad b_2= \neg\neg x \meet \neg y \quad b_3 = \neg x \meet \neg\neg y\quad b_4= \neg x \meet \neg y.
$$
It is a nice exercise to show that the subalgebra $\alg G$ of $\alg F$ generated by $b_1,b_2$ is infinite (see \cite[Theorem 4.4]{BalbesHorn1970}). We will show that if $\alg A$ has at least three atoms, then $\alg G$ is a homomorphic image of $\alg A$; since $\alg A$ is finite this is clear contradiction.

Let $a_1, \ldots, a_{n}$ be the atoms of $\alg A$, with $n \geq 3$. Set $h(a_{1}) = b_{1},  h(a_{2}) = b_{2}, h(a_{3}) = b_{3}$ and $h(a_{i}) = b_{3}$ for $i: 4\leq i \leq n$; then the map $h\colon \alg A \longrightarrow R(\alg F)$ defined by $h(x) = \bigvee\{h(a_{i}): a_i \le x\}$ is a homomorphism.
Now $\neg\neg\colon \alg F \longrightarrow R(\alg F)$ is an onto homomorphism and  moreover if $u \in F$, $u \le (u \imp 0) \imp 0 = \neg \neg u$ (Lemma \ref{Heyting}(2)); thus $\neg\neg u = 0$ if and only if $u=0$. Since $\alg A$ is zero-projective, there is a homomorphism $g\colon\alg A \longrightarrow \alg F$ with $\neg\neg g = h$.  But then $g(\neg\neg a_1) = \neg\neg g(a_1) =h(a_1) = b_1$ and
$g(\neg\neg a_2) = \neg\neg g(a_2) = h(a_2) = b_2$, so $g$ restricted to $\alg A$ is  onto $\alg G$. Thus we reached the desired contradiction.
\end{proof}

\begin{corollary}\label{twoandfour} Let $\alg A$ be a finite projective Heyting algebra and suppose that $\bigoplus_{i=0}^n \alg A_i$ is a decomposition of $\alg A$ into its sum irreducible components. Then $\alg A_n \cong\mathbf 2$ and for each $i <n$, $\alg A_i$ is isomorphic with either $\mathbf 2$ or $\mathbf 4$.
\end{corollary}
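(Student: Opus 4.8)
The plan is to assemble the corollary from the three structural facts already in hand—Lemma \ref{finiteprojective}, Lemma \ref{weaklyprojective}, and Theorems \ref{twoatoms} and \ref{nothree}—so that the only genuinely new reasoning concerns sum irreducible components possessing a single atom.

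First I would pin down the uppermost component. Since $\alg A$ is finite and projective in $\mathsf{HA}$, Lemma \ref{finiteprojective} gives that $\alg A$ is subdirectly irreducible. As recalled in the discussion preceding Lemma \ref{node}, the finite subdirectly irreducible Heyting algebras are precisely those whose top element is (completely) join irreducible, equivalently those whose last sum irreducible component is $\mathbf 2$. Hence $\alg A_n \cong \mathbf 2$.

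Next I would exploit zero-projectivity of the individual summands. The variety $\mathsf{HA} = \mathsf{P_{2}HL}$ is closed under ordinal sums (Proposition \ref{prop:ordsumexists} and the remarks following it on $n$-potent subvarieties). Since $\alg A = \bigoplus_{i=0}^n \alg A_i$ is projective and each $\alg A_i \in \mathsf{HA}$, Lemma \ref{weaklyprojective} yields that every component $\alg A_i$ is zero-projective in $\mathsf{HA}$. Each $\alg A_i$ is moreover finite, and by hypothesis sum irreducible.

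Finally I would classify the components $\alg A_i$ with $i < n$. Being a finite nontrivial Heyting algebra, $\alg A_i$ has at least one atom, and by Theorem \ref{nothree} it has at most two. If it has exactly two atoms, then Theorem \ref{twoatoms} applies verbatim and $\alg A_i \cong \mathbf 4$. The case of exactly one atom $a$ is the only spot I expect to require a (mild) extra argument, handled via nodes: every nonzero element of $\alg A_i$ lies above the unique atom $a$, so $a$ is comparable with every element and is therefore a node of $\alg A_i$. Sum irreducibility forbids nodes other than $0$ and $1$, and since $a$ is an atom we have $a \ne 0$, whence $a = 1$ and $\alg A_i \cong \mathbf 2$. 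Thus each $\alg A_i$ with $i < n$ is isomorphic to $\mathbf 2$ or $\mathbf 4$, which together with $\alg A_n \cong \mathbf 2$ completes the proof.
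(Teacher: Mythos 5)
Your proof is correct and follows essentially the same route as the paper: Lemma \ref{finiteprojective} plus the folklore fact for $\alg A_n \cong \mathbf 2$, Lemma \ref{weaklyprojective} for zero-projectivity of the components, and Theorems \ref{nothree} and \ref{twoatoms} for the bound on atoms and the two-atom case. Your explicit node argument for the one-atom case (the unique atom is a node, so sum irreducibility forces it to equal $1$) is a welcome detail that the paper simply asserts without proof.
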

\begin{proof} We have already observed that $\alg A_n$ must be isomorphic with $\mathbf 2$. If $i<n$, by Lemma \ref{weaklyprojective} $\alg A_i$ is zero-projective, hence it can have at most two atoms by Theorem \ref{nothree}. If it has only one atom, then $\alg A_i \cong \mathbf 2$; if it has two atoms, since it also sum irreducible, Theorem \ref{twoatoms} applies and thus $\alg A_i \cong \mathbf 4$.
\end{proof}

We will now show that the necessary condition in Corollary \ref{twoandfour} is also sufficient. First we need a key lemma.

\begin{lemma} \label{inductionstep} Let $\alg A$ be a finite Heyting algebra, and let $\alg B$ be either $\alg 2$ or $\alg 4$. If $\alg A \oplus \mathbf 2$ is projective, then also $\alg A \oplus \alg B \oplus \mathbf 2$ is projective.\end{lemma}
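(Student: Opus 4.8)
The plan is to prove this by exhibiting $\alg A\oplus\alg B\oplus\mathbf 2$ as a retract of a finitely generated free Heyting algebra, which by Theorems~\ref{whitman} and~\ref{finitelypresented} (the target is finite, hence finitely presented) gives projectivity. I would start from the hypothesis that $\alg A\oplus\mathbf 2$ is projective: again by those theorems there are a finitely generated free algebra $\alg F=\alg F_\mathsf{HA}(X)$, a surjection $p\colon\alg F\to\alg A\oplus\mathbf 2$ and an embedding $\iota\colon\alg A\oplus\mathbf 2\to\alg F$ with $p\iota=\op{id}$. Writing $d$ for the coatom of $\alg A\oplus\mathbf 2$ (the bottom of the top summand $\mathbf 2$), the element $\gamma:=\iota(d)$ is \emph{dense} in $\alg F$, because $\neg d=d\imp 0=0$ in $\alg A\oplus\mathbf 2$ (implication from a higher to a lower component) and $\iota$ preserves $0$ and $\imp$. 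A key preliminary observation is that one must \emph{not} try to insert the new component below $\gamma$: since $d$ is the join of $(\alg A\oplus\mathbf 2)\setminus\{1\}$ and $\iota$ preserves finite joins, $\gamma$ is the join of $\iota\bigl((\alg A\oplus\mathbf 2)\setminus\{1\}\bigr)$ in any extension of $\alg F$, so there is no room strictly between the image of $\alg A$ and $\gamma$.

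Instead I would keep $\iota$ as the realization of the \emph{bottom} part $\alg A\oplus\mathbf 2$ of the target and build the new layer inside the relative Heyting algebra $[\gamma,1]$, using fresh generators. This is uniform: in $\alg A\oplus\alg B\oplus\mathbf 2$ the interval above the element realized by $\gamma$ is exactly $[\gamma,1]\cong\alg B\oplus\mathbf 2$ (namely $\mathbf 3=\mathbf 2\oplus\mathbf 2$ if $\alg B=\mathbf 2$, and $\mathbf 4\oplus\mathbf 2$ if $\alg B=\mathbf 4$), so it suffices to realize $\alg B\oplus\mathbf 2$ freshly with bottom $\gamma$ and glue at $\gamma$ and $1$. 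Passing to $\alg F^\ast=\alg F_\mathsf{HA}(X\cup Y)$ with $Y$ fresh ($|Y|=1$ for $\alg B=\mathbf 2$, $|Y|=2$ for $\alg B=\mathbf 4$), I would mimic the base case, in which the chain $\mathbf 3$ is realized in $\alg F_\mathsf{HA}(y)$ by the dense element $y\join\neg y$ (whose negation is $0$ and double negation is $1$, so it generates a copy of $\mathbf 3$). For $\alg B=\mathbf 2$ the new coatom is realized by a dense element $\gamma^{+}$ of $[\gamma,1]$ with $\gamma\le\gamma^{+}<1$; for $\alg B=\mathbf 4$ the two atoms are realized by $\gamma\join b_1,\gamma\join b_2$, where $b_1,b_2$ are the atoms of a free Boolean algebra of regular elements built from $Y$ (Theorem~\ref{finitefree}), so that $(\gamma\join b_1)\meet(\gamma\join b_2)=\gamma$ and $(\gamma\join b_1)\join(\gamma\join b_2)$ is the coatom. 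All new elements are automatically dense by Lemma~\ref{Heyting}(1) (they lie above the dense $\gamma$), and the crucial cross‑relations compute correctly: for $a\in\alg A$, one has $\gamma\imp\iota(a)=\iota(d\imp a)=\iota(a)$, whence any new top element $q\ge\gamma$ satisfies $q\imp\iota(a)=\iota(a)$, matching the ordinal‑sum rule.

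With these elements chosen I would define the section $s\colon\alg A\oplus\alg B\oplus\mathbf 2\to\alg F^\ast$ to agree with $\iota$ on the bottom copy of $\alg A\oplus\mathbf 2$ and to send the top elements of $\alg B\oplus\mathbf 2$ to the freshly constructed ones, and the retraction $r\colon\alg F^\ast\to\alg A\oplus\alg B\oplus\mathbf 2$ to extend $p$ and map the generators of $Y$ to the atoms of $\alg B$. Checking $rs=\op{id}$ then reduces to checking it on generators, which holds by construction; Theorem~\ref{whitman} then yields that $\alg A\oplus\alg B\oplus\mathbf 2$ is projective.

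The main obstacle, and the step requiring genuine care, is verifying that adjoining the fresh elements produces \emph{exactly} a copy of $\alg A\oplus\alg B\oplus\mathbf 2$ and nothing more, i.e.\ that the subalgebra of $\alg F^\ast$ generated by $\iota(\alg A\oplus\mathbf 2)$ together with the new elements has precisely the intended finite Hasse diagram and that $s$ is injective. Concretely this means establishing the relative pseudocomplementation relations (such as $\gamma^{+}\imp\gamma=\gamma$ and $\gamma^{+}\neq1$ for $\alg B=\mathbf 2$, and $q_1\imp q_2=q_2$, $q_1\meet q_2=\gamma$, $q_1\join q_2\neq1$ for $\alg B=\mathbf 4$), together with the fact that no further new elements are generated. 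This is exactly where the Heyting identities of Lemma~\ref{Heyting}(2)--(4) and the free‑Boolean description of the regular elements (Theorem~\ref{finitefree}) are needed; once the generated subalgebra is identified with the target and $s$ is seen to split $r$, the proof is complete.
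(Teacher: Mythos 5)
Your overall strategy coincides with the paper's: keep the retraction pair $(\iota,p)$ witnessing projectivity of $\alg A\oplus\mathbf 2$, pass to $\alg F_{\mathsf{HA}}(X\cup Y)$ with fresh variables, realize the new component by terms sitting above $\gamma=\iota(d)$, and extend both the section and the retraction. Your $\alg B=\mathbf 2$ case is, in outline, exactly the paper's: a relatively dense element of $[\gamma,1]$, which made concrete is $y\join(y\imp\gamma)$, with the retraction sending $y$ to the new coatom. However, your $\alg B=\mathbf 4$ case contains a genuine error, not merely a deferred verification: the two atoms of $\mathbf 4$ \emph{cannot} be realized as $\gamma\join b_1,\ \gamma\join b_2$ with $b_1,b_2$ absolute regular elements built from $Y$ as in Theorem \ref{finitefree}.

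Two things go wrong. First, the section you define is not a homomorphism: since $a_0\imp b_0=b_0$ in $\alg A\oplus\mathbf 4\oplus\mathbf 2$, you need $(\gamma\join b_1)\imp(\gamma\join b_2)=\gamma\join b_2$, and this fails in the free algebra. Take $\alg A=\mathbf 2$, so $\gamma=x\join\neg x$, and evaluate in the Heyting algebra of open subsets of $\mathbb{R}$ by $x\mapsto(-\infty,0)$ and $y\mapsto(0,1)$ (say with $b_1=\neg\neg y$, $b_2=\neg y$; the same happens for atoms built from two variables): then $\gamma\join b_2$ evaluates to $\mathbb{R}\setminus\{0\}$, while $b_1$ evaluates to $(0,1)\subseteq\mathbb{R}\setminus\{0\}$, so $(\gamma\join b_1)\imp(\gamma\join b_2)$ evaluates to $\mathbb{R}=1$. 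Second, and decisively, no choice of retraction $r$ can repair this. In $\alg A\oplus\mathbf 4\oplus\mathbf 2$ every element $z$ of the upper components other than $1$ satisfies $\neg z=0$, hence $\neg\neg z=1$; so all regular elements lie in $(A\setminus\{1\})\cup\{0,1\}$, i.e. below $c_0$ or equal to $1$. Since regularity ($\neg\neg z= z$) is preserved by any homomorphism, $r(b_i)$ is regular in the target, while $r\circ s=\mathrm{id}$ forces $r(\gamma)=c_0$; therefore $r(s(a_0))=c_0\join r(b_i)\in\{c_0,1\}$, which can never equal the atom $a_0$. The fix is precisely the paper's choice: use the regular elements of the \emph{relative} Heyting algebra $[\gamma,1]$, namely $b_0\mapsto y\imp\gamma$ and $a_0\mapsto(y\imp\gamma)\imp\gamma$ (one fresh variable suffices). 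For these, the needed implication identities are instances of Lemma \ref{Heyting}(3)--(4), valid for all elements, and the retraction $y\mapsto a_0$ works because $(a_0\imp c_0)\imp c_0=b_0\imp c_0=a_0$. Joining $\gamma$ with absolute double negations is not the same as relativizing the negation to $\gamma$, and the difference is exactly where your construction breaks.
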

\begin{proof} We shall label the elements as in Figure \ref{figure:lemmainduction}. If $\alg A \oplus \mathbf 2$ is (finite and) projective, then it is a retract of a free algebra generated by a (finite) set $X$, $\alg F_{\sf HA}(X)$. Thus, there exist homomorphisms $i\colon \alg A \oplus \alg 2 \to \alg F_{\sf HA}(X)$, and $j\colon \alg F_{\sf HA}(X) \to \alg A \oplus \alg 2$ such that $j \circ i = id_{\alg A \oplus \alg 2}$. We consider first the case $\alg B \cong 2$. Given a new variable $y$, we will define homomorphisms $\bar i\colon \alg A \oplus \alg 2 \oplus \alg 2 \to \alg F_{\sf HA}(X \cup \{y\})$, and $\bar j\colon \alg F_{\sf HA}(X \cup \{y\}) \to \alg A \oplus \alg 2 \oplus \alg 2$ such that $\bar j \circ \bar i = id_{\alg A \oplus \alg 2 \oplus \alg 2}$. We define:
\begin{align*}
&\bar{i}(x) = i(x), \quad \mbox{ for } x \in A \oplus 2\\
&\bar{i}(c_1) = y \join (y \imp i(c_0)).
\end{align*}

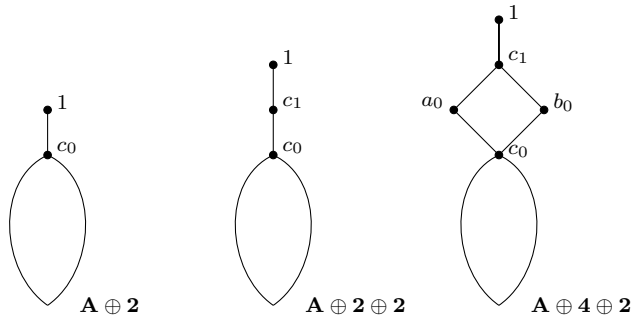
\begin{figure}[h]
\begin{center}
\begin{tikzpicture}
\draw (0,0) to [out=145, in= 205] (0,2);
\draw (0,0) to [out=35, in= 335] (0,2);
\draw (0,2) to (0,2.6);
\draw[fill] (0,2) circle [radius=0.05];
\draw[fill] (0,2.6) circle [radius=0.05];
\node[right] at (0,2.1) {\footnotesize $c_0$};
\node[right] at (0,2.7) {\footnotesize $1$};
\draw (3,0) to [out=145, in= 205] (3,2);
\draw (3,0) to [out=35, in= 335] (3,2);
\draw (3,2) to (3,3.2);
\draw[fill] (3,2) circle [radius=0.05];
\draw[fill] (3,2.6) circle [radius=0.05];
\draw[fill] (3,3.2) circle [radius=0.05];
\node[right] at (3,2.1) {\footnotesize $c_0$};
\node[right] at (3,2.7) {\footnotesize $c_1$};
\node[right] at (3,3.3) {\footnotesize $1$};
\draw (6,0) to [out=145, in= 205] (6,2);
\draw (6,0) to [out=35, in= 335] (6,2);
\draw[fill] (6,2) circle [radius=0.05];
\draw[fill] (5.4,2.6) circle [radius=0.05];
\draw[fill] (6.6,2.6) circle [radius=0.05];
\draw[fill] (6,3.2) circle [radius=0.05];
\draw[fill] (6,3.8) circle [radius=0.05];
\draw (6,2) -- (5.4,2.6) -- (6,3.2) -- (6,3.8) -- (6,3.2) -- (6.6,2.6)-- (6,2);
\node[right] at (6,2.05) {\footnotesize $c_0$};
\node[left] at (5.4,2.7) {\footnotesize $a_0$};
\node[right] at (6.6,2.7) {\footnotesize $b_0$};
\node[right] at (6,3.3) {\footnotesize $c_1$};
\node[right] at (6,3.9) {\footnotesize $1$};
\node[right] at (0.3,0) {\footnotesize $\alg A \oplus \mathbf 2$};
\node[right] at (3.3,0) {\footnotesize $\alg A \oplus \mathbf 2 \oplus \mathbf 2$};
\node[right] at (6.3,0) {\footnotesize $\alg A \oplus \mathbf 4 \oplus \mathbf 2$};
\end{tikzpicture}
\end{center}
\caption{Labeling $\alg B$ and $\mathbf 2$}\label{figure:lemmainduction}
\end{figure}

Since $i(c_0) \le \bar{i}(c_1)$ it is easy to check that $\bar{i}$ is a lattice homomorphism; for instance
$$
\bar{i}(c_0) \join \bar{i}(c_1) = i(c_0) \join \bar{i}(c_1) = \bar{i}(c_1) = \bar{i}(c_0 \join c_1).
$$
For the implication there is only one nontrivial case  and we proceed to prove it:
\begin{align*}
\bar{i}(c_1) \imp \bar{i}(c_0) &=  (y \join (y \imp i(c_0))) \imp i(c_0) \\
&= (y \imp i(c_0)) \meet ((y \imp i(c_0)) \imp i(c_0)) \quad\text{by \cite[Theorem 3.10(2)]{GJKO}}\\
&= (y \imp i(c_0)) \meet i(c_0) \quad\text{by Lemma \ref{Heyting}(5)}\\
&= i(c_0) \quad\text{by the properties of residuated lattices}\\
&=\bar{i}(c_0) = \bar{i}(c_1 \imp c_0) \quad\text{by definition of ordinal sum}.
\end{align*}
We now let $\bar j$ be the homomorphism coinciding with $j$ on $\alg F_{\sf HA}(X)$, and such that $\bar j(y) = c_1$. We prove that $\bar j \circ \bar i = id_{\alg A \oplus \alg 2 \oplus \alg 2}$. If $x \in \alg A \oplus \alg 2$, then $\bar j (\bar i(x)) = \bar j(i(x)) = j(i(x)) = x$. Moreover $\bar j (\bar i(c_1)) = \bar j(y \lor (y \to i(c_0))) = \bar j(y) \lor (\bar j(y) \to \bar j(i(c_0))) = c_1 \lor (c_1 \to c_0) = c_1 \lor c_0 = c_1$. Thus $\alg A \oplus \alg 2 \oplus \alg 2$ is a retract of $\alg F_{\sf HA}(X \cup \{y\})$ and is therefore projective.

Let us now consider the case where $\alg B \cong \alg 4$. We will define $\widehat i\colon \alg A \oplus \mathbf 4 \oplus \mathbf 2 \to \alg F_{\sf HA}(X \cup \{y\})$ and $\widehat j\colon \alg F_{\sf HA}(X \cup \{y\}) \to \alg A \oplus \mathbf 4 \oplus \mathbf 2$ such that their composition is the identity on $\alg A \oplus \mathbf 4 \oplus \mathbf 2$. First we define:
\begin{align*}
&\widehat{i}(x) = i(x) \quad \mbox{ for } x \in A \oplus 2\\
&\widehat{i}(a_0) = (y \imp i(c_0)) \imp i(c_0)\\
&\widehat{i}(b_0) = y \imp i(c_0)\\
&\widehat{i}(c_1) = \widehat{i}(a_0) \join \widehat{i}(b_0).
\end{align*}
It can be shown that $\widehat{i}$ is a lattice homomorphism, indeed, by definition $\widehat{i}(c_1) = \widehat{i}(a_0) \join\, \widehat{i}(b_0)$, and moreover $\widehat{i}(a_0) \land\, \widehat{i}(b_0) = ((y \imp i(c_0)) \imp i(c_0)) \land (y \imp i(c_0)) = i(c_0) = \widehat{i}(a_0 \land b_0)$.

Next, observe that for $v \in A\setminus\{1\}$ and $x \in \{c_0,a_0,b_0,c_1\}$, $i(v) \leq i(c_0) \leq \widehat i(x)$, and $i(c_0) \to i(v) = i(c_0 \to v) = i(v)$. Thus by Lemma \ref{Heyting}(1) we get:
$$
\widehat{i}(x) \imp \widehat{i}(v) = \widehat{i}(x) \imp i(v) = i(v) = \widehat{i}(v)= \widehat{i}(x \imp v).
$$
Moreover:
\begin{align*}
\widehat{i}(a_0) \imp \widehat{i}(c_0) &= ((y \imp i(c_0)) \imp i(c_0)) \imp i(c_0) \\
&= y \imp i(c_0)  \quad \text{by the properties of residuated lattices}\\
&= \widehat{i}(b_0)= \widehat{i}(a_0 \imp c_0) \quad\text{by residuation}.
\end{align*}
Similarly, $\widehat{i}(b_0) \imp \widehat{i}(c_0)= \widehat{i}(b_0 \imp c_0)$. The arguments for proving $\widehat{i}(a_0) \imp \widehat{i}(b_0)= \widehat{i}(a_0 \imp b_0)$ and $\widehat{i}(b_0) \imp \widehat{i}(a_0)= \widehat{i}(b_0 \imp a_0)$ are shown in an analogous way, using Lemma \ref{Heyting}(3) and (4).
Finally, the proof that $\widehat{i}(c_1) \imp \widehat{i}(x)= \widehat{f}(c_1 \imp x)$ whenever $x < c_1$ follows from direct computation and it is left to the reader. Thus, $\widehat i$ is an homomorphism. We let $\widehat j$ be the homomorphism extending $j$ on $\alg F_{\sf HA}(X)$ and such that $\widehat j(y) = a_0$.
We prove that $\widehat j \circ \widehat i$ is the identity on $\alg A \oplus \alg 4 \oplus \alg 2$.
For $x \in \alg A \oplus \alg 2$, $\widehat j \circ \widehat i(x) = j \circ i (x) = x$.
\begin{align*}
\widehat j \circ \widehat i(a_0)&= \widehat j((y \to i(c_0)) \to i(c_0))\\ &= (\widehat j(y) \to j \circ i(c_0)) \to j \circ i(c_0) = (a_0 \to c_0) \to c_0 = a_0.\\
\widehat j \circ \widehat i(b_0)&= \widehat j(y \to i(c_0)) = \widehat j(y) \to j \circ i(c_0) = a_0 \to c_0 = b_0\\
\widehat j \circ \widehat i(c_1)&= \widehat j(\widehat i(a_0) \lor \widehat i(b_0)) = a_0 \lor b_0 = c_1.
\end{align*}
Thus, $\alg A \oplus \alg 4 \oplus \alg 2$ is a retract ot $\alg F_{\mathsf HA}(X \cup \{y\})$ and therefore is projective.
\end{proof}

Thus we get the main result of this section.

\begin{theorem}\label{projectiveHA} Let $\kappa \leq \omega$ be any ordinal and let $\alg A = \bigoplus_{n < \kappa} \alg A_n$ where $\alg A_n$ is either $\mathbf 2$ or $\mathbf 4$ for all $n \in  \mathbb N$. Then $\alg A \oplus \mathbf 2$ is projective in $\mathsf{HA}$.
\end{theorem}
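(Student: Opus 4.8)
The plan is to split the argument according to whether $\kappa$ is finite or $\kappa=\omega$, treating the finite case by induction on $\kappa$ using Lemma \ref{inductionstep}, and the case $\kappa=\omega$ by a limiting argument built on the finite one. Throughout, recall that $\mathsf{HA}=\mathsf{P_2HL}$ is closed under ordinal sums, so $\alg A\oplus\mathbf 2\in\mathsf{HA}$ in all cases.

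For finite $\kappa$ I would argue by induction on $\kappa$. The base case $\kappa=0$ gives $\alg A\oplus\mathbf 2=\mathbf 2$, which is projective in $\mathsf{HA}$ by Lemma \ref{lemma:2proj} (indeed $\mathbf 2$ is the free algebra on the empty set). For the inductive step, suppose $\bigoplus_{n<k}\alg A_n\oplus\mathbf 2$ is projective. Setting $\alg A=\bigoplus_{n<k}\alg A_n$, which is a finite Heyting algebra, and $\alg B=\alg A_k\in\{\mathbf 2,\mathbf 4\}$, Lemma \ref{inductionstep} applies verbatim and yields that $\alg A\oplus\alg B\oplus\mathbf 2=\bigoplus_{n<k+1}\alg A_n\oplus\mathbf 2$ is projective. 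This settles every finite $\kappa$.

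For $\kappa=\omega$ I would first record the structure of $\alg C:=\alg A\oplus\mathbf 2$. Writing $\alg A^{(k)}=\bigoplus_{n<k}\alg A_n$, the algebra $\alg C$ is the directed union of the subalgebras $\alg C_k:=\alg A^{(k)}\oplus\mathbf 2$, where all the $\alg C_k$ share the topmost copy of $\mathbf 2$; concretely $\alg C_k$ consists of the elements of $\alg A^{(k)}$ (other than its top) together with the bottom element $c$ of the top $\mathbf 2$ and the global top $1$, and $\bigcup_k\alg C_k=\alg C$. Each $\alg C_k$ is finite, hence projective by the finite case already established. The goal is then to assemble the finite retractions into a single retraction of $\alg C$ onto a free Heyting algebra $\alg F:=\alg F_{\sf HA}(\{z\}\cup\{y_n:n<\omega\})$ (countably many generators suffice since $\alg C$ is countable). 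I would fix a surjection $j\colon\alg F\to\alg C$ with $j(z)=c$ whose restriction to $\alg F_{\sf HA}(\{z,y_0,\dots,y_{k-1}\})$ maps onto $\alg C_k$, and build sections $s_k\colon\alg C_k\to\alg F$ splitting $j$ over each $\alg C_k$ and agreeing on the overlaps $\alg C_k\subseteq\alg C_{k+1}$; their union is a homomorphism $s\colon\alg C\to\alg F$ with $js=\op{id}_\alg C$, so $\alg C$ is a retract of $\alg F$ and therefore projective by Theorem \ref{whitman}.

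The delicate point, and the step I expect to be the main obstacle, is the coherence of the sections $s_k$ at the limit, which governs the treatment of the element $c$: the bottom of the \emph{persistent} top $\mathbf 2$, a dense coatom lying above the entire body $\bigoplus_{n<\omega}\alg A_n$. One cannot simply iterate the construction of Lemma \ref{inductionstep}, since there each inductive step reassigns the image of the top element using the freshly introduced variable; the sections then never stabilise on the top, and their colimit collapses the persistent $\mathbf 2$, producing the body $\bigoplus_{n<\omega}\alg A_n$ rather than $\alg C$. Moreover no ordinal sum of finite algebras can serve as the ambient projective object, because in any such sum the supremum of a cofinal $\omega$-chain is forced to be the global top, leaving no room for a coatom like $c$; this is exactly why the argument must be carried out inside the free algebra. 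The fix I would pursue is to pin the image of $c$ to a \emph{fixed} dense element, say $s_k(c)=z\vee\neg z$ for every $k$ (note $\neg c=0$ in $\alg C$, so the image of $c$ must be dense), and to insert each new component $\alg A_k$ strictly below this fixed top using the generator $y_k$, in the spirit of the term manipulations of Lemma \ref{inductionstep} but now capped below $z\vee\neg z$. Checking that such an ``insertion below a fixed top'' is simultaneously a homomorphism and a splitting, keeping the image of $\alg A_k$ above the image of the already-built body yet strictly below $z\vee\neg z$, is the technical heart of this case and is where the identities of Lemma \ref{Heyting}(1)--(5) would again be decisive.
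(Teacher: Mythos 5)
Your treatment of finite $\kappa$ is exactly the paper's proof: the paper's entire argument is the one-line induction you spell out (base case $\mathbf 2$ projective, inductive step Lemma \ref{inductionstep}), so on finite sums you and the authors agree completely. You are also right that this induction says nothing about $\kappa=\omega$, and right to single out the limit step, and in particular the image of the persistent coatom $c$, as the real issue.

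However, the difficulty you flag there is not a technical obstacle to be engineered around: it is fatal, because the statement is \emph{false} for $\kappa=\omega$, and this already follows from the paper's own results. The variety $\mathsf{HA}$ has the finite model property (a classical fact), and $\alg A=\bigoplus_{n<\omega}\alg A_n$ is infinite, so Proposition \ref{sumprojective} (which rests on Lemma \ref{techlemma1}) forbids $\alg A\oplus\mathbf 2$ from being projective: $A$ is an infinite subset of $\alg A\oplus\mathbf 2$ closed under $\imp$, and the least upper bound of $A\setminus\{1\}$ is the coatom $c$, not $1$. Concretely, any version of your fix dies as follows. Suppose $j\colon\alg F\to\alg C:=\alg A\oplus\mathbf 2$ is a surjection from a free algebra and $s\colon\alg C\to\alg F$ is a homomorphic section, and put $t=s(c)$ (whether $t=z\vee\neg z$ or anything else is irrelevant); then $t\neq 1$, so by the FMP there are a finite Heyting algebra $\alg B$ and a homomorphism $h\colon\alg F\to\alg B$ with $h(t)\neq 1$. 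Since $s(A)$ is infinite and $\alg B$ is finite, there are distinct $r,r'\in s(A)$ with $h(r)=h(r')$, say $r\not\leq r'$; then $r\imp r'=s(a\imp a')$ for some $a\imp a'\in A\setminus\{1\}$, hence $r\imp r'\leq s(c)=t$, and so $h(t)\geq h(r)\imp h(r')=1$, a contradiction. So no section exists no matter how you pin the image of $c$: your own observation that $c$ must be sent to a ``dense'' element lying above the image of an infinite chain is precisely what this argument exploits. The correct conclusions are: (i) your proof is complete and correct exactly for finite $\kappa$; (ii) the theorem as stated, together with the paper's proof (which tacitly pretends the induction reaches $\omega$) and the earlier remark that the finiteness hypothesis in Theorem \ref{cor:projectivesplitting} ``cannot be removed,'' is in error at $\kappa=\omega$, since it contradicts Proposition \ref{sumprojective}; the statement should be restricted to finite ordinals.
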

\begin{proof}
Since $\alg 2$ is projective in $\mathsf{HA}$, the claim follows by induction on $\kappa$ using Lemma \ref{inductionstep}.
\end{proof}

Finite projective Heyting algebras have been characterized in \cite{BalbesHorn1970}; in this section we have given an alternative proof using the concepts of ordinal sum and sum irreducibility to get (we hope) a more streamlined and clear sequence of arguments.

\section{Divisible residuated lattices}\label{section:hoops}
In the previous sections we have exploited the ordinal sum construction, while in this section we will make use of a strong order property that characterizes a large and interesting class of residuated lattices: divisibility.
We observe that the results in this section are more conveniently stated for commutative and integral residuated {\em semilattices}, which we recall are the subreducts of commutative and integral residuated lattices and share a good deal of their theory with them (see \cite{Agliano2018a} for an extensive treatment, even for the noncommutative case).
First we make a general observation.

\begin{proposition}\label{prop:retractproj1}
Let $\vv K$ be a class of algebras. If every algebra $\alg B \in \vv K$ is a retract of every algebra $\alg A \in \vv K$ of which it is an homomorphic image, then every algebra in $\vv K$ is projective in $\vv K$.
\end{proposition}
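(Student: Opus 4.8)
The plan is to reduce the statement to Whitman's characterization of projectivity (Theorem \ref{whitman}), which is available whenever the ambient class contains its free algebras---in particular for the varieties and quasivarieties of residuated semilattices to which the proposition will be applied. Fix an arbitrary $\alg P \in \vv K$; the goal is to show that $\alg P$ is projective in $\vv K$.

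First I would exhibit $\alg P$ as a homomorphic image of a free algebra. Taking the underlying set $P$ as a set of generators, the free algebra $\alg F_\vv K(P)$ lies in $\vv K$, and the identity map on $P$ extends to a homomorphism $q \colon \alg F_\vv K(P) \to \alg P$ whose image is a subalgebra containing every generator, hence all of $\alg P$; thus $q$ is surjective. Now the hypothesis applies with $\alg A = \alg F_\vv K(P)$ and $\alg B = \alg P$: since $\alg P$ is a homomorphic image of $\alg F_\vv K(P)$, it is a retract of $\alg F_\vv K(P)$. By Theorem \ref{whitman} (the equivalence between projectivity and being a retract of a free algebra), $\alg P$ is projective in $\vv K$. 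As $\alg P$ was arbitrary, every algebra in $\vv K$ is projective.

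Should one prefer to argue directly from the definition of projectivity---for instance when $\vv K$ is only assumed closed under finite products and subalgebras rather than to contain its free algebras---the same idea works via a pullback. Given $h \colon \alg P \to \alg C$ and a surjection $g \colon \alg B \to \alg C$ in $\vv K$, one forms $\alg D = \{(p,b) \in \alg P \times \alg B : h(p) = g(b)\} \in \vv K$; the first projection $\pi_1 \colon \alg D \to \alg P$ is onto (using surjectivity of $g$), so $\alg P$ is a homomorphic image of $\alg D$, and a splitting $s$ of $\pi_1$ yields $f = \pi_2 s \colon \alg P \to \alg B$ with $g f = h$, since $h \pi_1 = g \pi_2$ holds on $\alg D$. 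The delicate point in this second route---and the reason I would favour the Whitman argument---is that the hypothesis only guarantees that $\alg P$ is a retract of $\alg D$ (a splitting of \emph{some} retraction), whereas here one needs the specific projection $\pi_1$ to split; Theorem \ref{whitman} bypasses this matching issue entirely, requiring only an existential retract of a free algebra.
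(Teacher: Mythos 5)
Your main argument (the Whitman route) proves a genuinely weaker statement than Proposition \ref{prop:retractproj1}, and the difference is fatal for the way the proposition is used in the paper. The proposition is stated for an \emph{arbitrary} class $\vv K$, and the paper immediately applies it with $\vv K = \vv V_{fin}$, the class of \emph{finite} algebras of a variety $\vv V$: finite hoops in Theorem \ref{mth}, finite bounded hoops in Theorem \ref{mth0}. The variety of hoops is not locally finite (the negative cone of $\mathbb{Z}$ is a one-generated infinite hoop), so the free algebras of $\vv V$ are infinite and lie outside $\vv V_{fin}$; moreover $\vv V_{fin}$ has no free objects of its own, since a free object on one generator would have to map onto one-generated finite Wajsberg chains of arbitrarily large size, which a finite algebra cannot do. Hence $\alg F_{\vv K}(P)$ need not exist, Theorem \ref{whitman} is unavailable, and your argument cannot deliver the ``finitely projective'' conclusions the proposition exists to produce. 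Your parenthetical claim that the proposition will be applied ``to varieties and quasivarieties'' is precisely where this goes wrong. Your fallback pullback argument would at least live in the right class ($\vv V_{fin}$ is closed under finite products and subalgebras), but you leave it incomplete, for the reason you yourself identify.

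The ``matching issue'' you raise is real, but the correct resolution is the opposite of the one you chose. Read literally with the paper's existential definition of retract, the proposition is actually \emph{false} for general classes: in the class $\{\mathbb{Z}/2 \times \mathbb{Z}/4,\ \mathbb{Z}/2\}$ of abelian groups the hypothesis holds (project onto, and include as, the first factor), yet $\mathbb{Z}/2$ is not projective there, because the surjection $(a,b) \mapsto b \bmod 2$ admits no section. So no proof that works only with ``some retraction exists'' can succeed in the stated generality; the hypothesis must be read---as the paper's own proof reads it, and as it is in fact verified in Theorems \ref{mth} and \ref{mth0}, where a section is constructed for an \emph{arbitrary given} surjection---as asserting that every surjective homomorphism between members of $\vv K$ splits. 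Under that reading the paper's proof is a two-line composition requiring nothing of $\vv K$: given $h\colon \alg C \to \alg B$ and a surjection $g\colon \alg A \to \alg B$, take $f'$ with $gf' = \op{id}_{\alg B}$ and set $f = f'h$, so that $gf = gf'h = h$. No free algebras, no pullbacks, and no closure assumptions are needed.
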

\begin{proof}
Let $\vv K$ be a class of algebras satisfying the hypothesis of the Proposition. Consider any algebra $\alg C \in \vv K$, and suppose there is an homomorphism $h\colon \alg C \to \alg B$, and a surjective homomorphism $g\colon \alg A \to \alg B$. Since $\alg B$ is a homomorphic image of $\alg A$, it is also its retract. Thus there exists an injective homomorphism $f'\colon\alg B \to \alg A$ such that $gf' = id_{\alg B}$. Then we can consider the homomorphism $f\colon \alg C \to \alg A$, $f = f'h$. We get $gf = gf'h = h$, and thus $\alg C$ is projective in $\vv K$.
\end{proof}

A usual application of this fact is taking $\vv K$ as the class $\vv V_{fin}$ of finite algebras of a variety $\vv V$; thus if every finite $\alg A \in \vv V$ is a retract of any finite $\alg B \in \vv V$ of which it is a homomorphic image, then every finite member of $\vv V$ is projective in $\vv V_{fin}$. In this case it is customary to say that every finite member of $\vv V$ is {\em finitely projective}.  In case $\vv V$ is also locally finite, the finite algebras are exactly the finitely presented algebras, and thus every finitely presented algebra in $\vv V$ is projective in $\vv V$.

The class of $\{\imp,\cdot\}$-subreducts of commutative and integral residuated semilattices is the quasivariety $\mathsf{PO}$ of commutative and integral residuated partially ordered monoids, commonly known as {\em pocrims} \cite{BlokRaf1997}; the fact that they are indeed all subreducts is a consequence of a very general embedding theorem stated in \cite{OnoKomori1985}. Since the divisibility equation makes the $\meet$ a definable operation by
$
a \,\meet\, b = (a \imp b)a, 
$
 every divisible variety of residuated semilattices is a variety of pocrims. In particular, the variety of hoops is a variety of pocrims, and for hoops we have the following result.

\begin{lemma}\label{idempo} Let $\alg A$ be a hoop and let $u \in A$ be idempotent; then for all $a,b \in A$
\begin{align}
&u \imp (a \imp b) = (u \imp a) \imp (u \imp b)\\
&u \imp ab = (u \imp a)(u \imp b).
\end{align}
\end{lemma}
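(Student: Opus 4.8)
The plan is to prove both identities by antisymmetry of $\le$, using residuation, the divisibility law in the form $x(x\imp y)=x\meet y$, and—crucially—the hypothesis $u\cdot u=u$ in the guise of a ``splitting'' of the leading $u$ into $u\cdot u$ whenever an upper bound by a product is needed.

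For the first identity the starting point is the exportation law $u\imp(a\imp b)=(ua)\imp b=a\imp(u\imp b)$, which uses only commutativity and residuation. Since $ua\le a$ gives $a\le u\imp a$, antitonicity of $\imp$ in its first argument yields $(u\imp a)\imp(u\imp b)\le a\imp(u\imp b)=u\imp(a\imp b)$, which is one of the two inequalities. For the reverse inequality I would residuate: $u\imp(a\imp b)\le(u\imp a)\imp(u\imp b)$ is equivalent to $u\cdot(u\imp a)\cdot(u\imp(a\imp b))\le b$. Here idempotency enters by rewriting the leading $u$ as $u\cdot u$ and regrouping, so that the left-hand side becomes $(u\meet a)\cdot(u\meet(a\imp b))$, using $x(x\imp y)=x\meet y$ twice. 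Since $u\meet a\le a$ and $u\meet(a\imp b)\le a\imp b$, this is at most $a\cdot(a\imp b)=a\meet b\le b$, as required.

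For the second identity the inequality $(u\imp a)(u\imp b)\le u\imp(ab)$ is the easy half, obtained by the same device: it is equivalent to $u\cdot(u\imp a)(u\imp b)\le ab$, and splitting $u=u\cdot u$ and regrouping turns the left-hand side into $(u\meet a)(u\meet b)\le ab$. The reverse inequality $u\imp(ab)\le(u\imp a)(u\imp b)$ is the delicate one, and I would first isolate its genuine content. Writing $p=u\imp a$, $q=u\imp b$ and $w=u\imp(ab)$, divisibility gives $a=p(p\imp a)$ and $b=q(q\imp b)$, hence $ab=pq\cdot r$ with $r\le 1$; thus $ab\le pq\le w$, the last inequality being the easy half already proved. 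Applying the monotone map $\f(x)=u\imp x$, and using $\f\circ\f=\f$ (again from $u=u\cdot u$) together with $x\le\f(x)$, gives both $\f(ab)\le\f(pq)$ and $\f(pq)\le\f(w)=w$, so that $w=\f(pq)=u\imp[(u\imp a)(u\imp b)]$. Consequently the whole identity reduces to the single statement that $(u\imp a)(u\imp b)$ is a fixed point of $\f$, equivalently that the image of $\f$ is closed under multiplication.

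This last step is the main obstacle. The naive attempt—factoring $w=p\cdot(p\imp w)$ and bounding the cofactor $p\imp w$ by $q$—fails: the first identity gives $p\imp w=u\imp(a\imp ab)\ge u\imp b=q$, the wrong direction, and indeed $p\imp w\le q$ is false in general, since the product $p\cdot(p\imp w)$ collapses onto $pq$ without the factors being comparable. To handle the collapse I would use the adjunction $u\cdot(-)\dashv(u\imp(-))$: the lower adjoint $\psi(x)=ux$ is multiplicative precisely because $u$ is idempotent, as $\psi(x)\psi(y)=u^2xy=uxy=\psi(xy)$, and $\psi,\f$ restrict to mutually inverse order isomorphisms between the images of $\f$ and of $\psi$, while $\f(u\meet x)=\f(x)$ holds in general. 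The plan is to transport the multiplicativity of $\psi$ through this isomorphism, using the divisibility law to identify the transported product on the image of $\f$ with the ambient product $\cdot$; establishing this identification—equivalently $u\imp[(u\imp a)(u\imp b)]\le(u\imp a)(u\imp b)$—is where the argument must work hardest, and it is the step I expect to require the most care.
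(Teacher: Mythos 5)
Your treatment of the first identity is correct and complete: the exportation law, the antitonicity argument for one inequality, and the idempotency trick $u=u\cdot u$ combined with $x(x\imp y)=x\meet y$ for the other are exactly the ``standard computation'' the paper has in mind. (Note that the paper gives no proof of this lemma at all: it cites \cite{EDPC3} for the first identity and \cite{SpinksVeroff2004} for the second.) For the second identity, your easy half $(u\imp a)(u\imp b)\le u\imp ab$ is also correct, and so is the reduction: with $p=u\imp a$, $q=u\imp b$, $w=u\imp ab$ and $\f(x)=u\imp x$, your argument that $w=\f(pq)$ is sound, so the lemma is indeed equivalent to $\mathrm{im}(\f)$ being closed under multiplication, i.e.\ to the inequality $u\imp [(u\imp a)(u\imp b)]\le (u\imp a)(u\imp b)$.

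That inequality, however, is never proved, and the plan you sketch for it is circular, as you half-concede. Transporting the product along the order isomorphism between $\mathrm{im}(\f)$ and $\mathrm{im}(\psi)$ yields on $\mathrm{im}(\f)$ the operation $x*y=\f(\psi(x)\psi(y))=\f(\psi(xy))=\f(xy)$, where the first equality uses multiplicativity of $\psi$ and the second uses $u\imp uz=u\imp z$ (itself a consequence of idempotency). So ``identifying the transported product with the ambient product'' is verbatim the statement $\f(pq)=pq$ that you set out to prove; the transport machinery establishes that $(\mathrm{im}(\f),*)$ is a monoid isomorphic to $(\mathrm{im}(\psi),\cdot)$, but says nothing about $*$ coinciding with $\cdot$. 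This is not a detail that can be left ``to require the most care'': it is the entire content of the hard half, and it is precisely the part that, as the paper emphasizes, is of a genuinely different nature from the first identity --- it was first proved by Spinks and Veroff via a computer-assisted proof involving very complex calculations, and no short syntactic derivation along the lines you propose is known. In summary: your proposal correctly disposes of the first identity and of half of the second, but leaves open exactly the step where the real difficulty of the lemma lies.
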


The two properties above, albeit similar, are quite different in nature; the first one can be proved through a standard computation \cite{EDPC3} while the second involves very complex calculations: it was first proved
in \cite{SpinksVeroff2004} via a computer-assisted proof. We can now prove the following.

\begin{theorem}\label{mth} 
Every finite hoop is finitely projective in the variety of hoops,  hence in any locally finite variety of hoops every finitely presented (equivalently, finite) algebra is projective.
\end{theorem}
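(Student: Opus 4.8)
The plan is to apply Proposition \ref{prop:retractproj1} with $\vv K$ the class of all finite hoops: it then suffices to show that whenever a finite hoop $\alg B$ is a homomorphic image of a finite hoop $\alg A$, it is already a retract of $\alg A$. So I would start from a surjection $g\colon \alg A \longrightarrow \alg B$ and produce a section $f$ with $gf = \op{id}_{\alg B}$. First I would read off $\ker g$ as the congruence $\theta_F$ attached to a filter $F$ of $\alg A$; since $\alg A$ is finite, $F$ has a least element $u$, and as $u^2 \in F$ (closure under $\cdot$) with $u = \min F$, while $u^2 \le u$ by integrality, $u$ is idempotent and $F = \upa u$.

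The engine of the argument is the map $\rho_u\colon \alg A \longrightarrow \alg A$, $\rho_u(x) = u \imp x$. I would invoke Lemma \ref{idempo} to see that $\rho_u$ preserves $\imp$ and $\cdot$ (hence $\meet$, which is term-definable, and $1$, as $u \imp 1 = 1$), so it is an endomorphism; and from $u^2 = u$ one reads $\rho_u\rho_u = \rho_u$, so $\rho_u$ retracts $\alg A$ onto the subalgebra of its fixed points $\alg A_u = \{x : u \imp x = x\}$. The one real computation I would carry out is that $\ker \rho_u = \theta_F$, i.e.\ $u \imp a = u \imp b$ iff $a \imp b, b \imp a \ge u$: one direction passes through $ua \le b \Rightarrow a \le u \imp b \Rightarrow u \imp a \le u \imp b$, the other multiplies $u \imp a = u \imp b$ by $u$ and uses divisibility to get $u \meet a = u \meet b$ and then $ua \le b$.

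With this in hand $\alg A_u \cong \alg A/\theta_F \cong \alg B$; more precisely, since $g$ is constant on $\theta_F$-classes while $\rho_u$ names one representative per class (the identity $a\,\theta_F\,(u\imp a)$ being a quick residuation check, as $(u\imp a)\imp a \ge u$ amounts to $u \meet a \le a$), the restriction $g|_{\alg A_u}$ is an isomorphism onto $\alg B$, and $f = \iota \circ (g|_{\alg A_u})^{-1}$ (with $\iota$ the inclusion $\alg A_u \hookrightarrow \alg A$) is the desired section. This establishes finite projectivity. For the final clause I would note that in a locally finite variety $\vv W$ of hoops the finite and the finitely presented algebras coincide and the finitely generated free algebras are finite; a finitely presented $\alg A \in \vv W$ is a quotient of such a free algebra, hence a retract of it by what was just proved, so $\alg A$ is projective in $\vv W$ by Theorem \ref{finitelypresented}.

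I expect the main obstacle to be conceptual rather than computational: recognizing that translation by an idempotent, $x \mapsto u \imp x$, is simultaneously a homomorphism and a retraction splitting exactly the quotient $\theta_F$. Its homomorphism property rests entirely on Lemma \ref{idempo}, whose multiplicative clause is the genuinely deep ingredient (the Spinks--Veroff identity); granting that lemma, the only thing left to verify is the kernel identity $\ker \rho_u = \theta_F$, and everything else is formal.
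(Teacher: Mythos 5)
Your proposal is correct and follows essentially the same route as the paper's proof: the key idempotent translation $x \mapsto u \imp x$ (with $u$ the least element of the kernel filter), Lemma \ref{idempo} to make it an endomorphism, the identification of its kernel with $\theta_F$, and the resulting retraction onto the subalgebra of fixed points, followed by the standard local-finiteness argument via Theorem \ref{finitelypresented}. The only cosmetic difference is that you verify $\ker\rho_u = \theta_F$ by direct residuation computations, whereas the paper deduces it from the fact that congruences of hoops are determined by their $1$-blocks.
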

\begin{proof} Let $\alg A, \alg B$ be finite hoops and let $g\colon\alg B \longrightarrow \alg A$ be a surjective homomorphism. Let $\th = \op{ker}(g)$ and $F=1/\th$; then $F$ is a filter of $\alg B$ and, since $\alg B$ is finite,
it has a minimum $u$ which is idempotent. Let $f(x)\colon u \imp x$; then by Lemma \ref{idempo} $f$ is an endomorphism of $\alg B$. It is also idempotent since for $b \in B$
$$
f(f(b)) = u \imp (u \imp b) = u^2 \imp b = u \imp b = f(b).
$$
Now if $b \in F$, then  $f(b) = u \imp b =1$; conversely if  $f(b) =1$ then  $u \imp b =1$, i.e. $u \le b$  and so  $b \in F$. In conclusion $F = 1/\op{ker}(f)$ and thus $\op{ker}(f) = \th$; if we set $\alg C = f(\alg B)$, which is a subalgebra of $\alg B$, then
$$
\alg A \cong \alg B /\th = \alg B/\op{ker}(f) \cong \alg C.
$$
Let now $h\colon \alg A \longmapsto \alg C$ be the resulting isomorphism, then $h$ is an injective homomorphism from $\alg A$ to $\alg B$. Moreover, by definition $h(a) = f(b)$ where $b$ is such that
$g(b) =a$; observe also that the fact that $f$ is idempotent implies that $(b,f(b)) \in \op{ker}(f)=\th$, thus $g(b) = g(f(b))$.
Hence, if $a \in A$ and $b \in B$ with $g(b)= a$ we have
$$
g(h(a))= g(f(b)) = g(b) = a.
$$
We have just proved that $\alg A$ is a retract of $\alg B$; by Proposition \ref{prop:retractproj1} the conclusion follows.
\end{proof}

In particular  any finite Brouwerian semilattice (i.e., an idempotent hoop) is finitely projective in
the variety of Brouwerian semilattices. Since the latter is locally finite \cite{Kohler1981} we get at once that all the finitely presented Brouwerian semilattices are projective (a fact already noted in \cite{Ghilardi1997}).

The class of $\imp$-subreducts of the variety of commutative and integral residuated lattice is the (proper) quasivariety $\mathsf{BCK}$ of BCK-algebras; those algebras were introduced by K. Iseki \cite{Iseki1962} and they have been widely studied since; the fact that they are indeed subreducts of commutative and integral residuated lattices is again a consequence of the quoted result in \cite{OnoKomori1985}.  In \cite{Ferr2000} the class of BCK-algebras that are implicative subreducts of hoops has been described: it is a variety called $\mathsf{HBCK}$ and the algebras therein are called HBCK-algebras.
An analogous of Theorem \ref{mth} follows with the same proof in the reduced language.

\begin{theorem}\label{thm:hbck} Every finite HBCK-algebra  is finitely projective in the variety of HBCK-algebras,  hence in any locally finite variety of HBCK-algebras  every finitely presented (equivalently, finite) algebra is projective.
 \end{theorem}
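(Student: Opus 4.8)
The statement is the $\imp$-reduct analogue of Theorem~\ref{mth}, so the plan is to transcribe the argument used there, checking at each step that it invokes nothing beyond the implication. By Proposition~\ref{prop:retractproj1}, applied to the class of finite HBCK-algebras, it suffices to show that every finite HBCK-algebra $\alg A$ is a retract of every finite HBCK-algebra $\alg B$ of which it is a homomorphic image. The ``hence'' clause then follows automatically: in a locally finite variety the finite algebras are exactly the finitely presented ones, so Proposition~\ref{prop:retractproj1} upgrades finite projectivity to genuine projectivity in the full variety.

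So I would fix a surjective homomorphism $g\colon \alg B \longrightarrow \alg A$ with $\alg A,\alg B$ finite, set $\th = \op{ker}(g)$, and let $F$ be the $1$-block of $\th$. The first thing to record is that HBCK-algebras inherit the usual filter theory of $\mathsf{BCK}$: $F$ is a deductive filter, the filter lattice is isomorphic to the congruence lattice, and $F$ is upward closed. Since $\alg B$ is finite, $F$ has a minimal element $u$, and one wants $F = \upa u$, i.e. $b \in F$ if and only if $u \le b$. What the argument then genuinely needs from $u$ are two implicational identities: the homomorphism identity $u \imp (a \imp b) = (u \imp a)\imp(u \imp b)$, which is precisely Lemma~\ref{idempo}(1) and is purely implicational, hence valid in $\mathsf{HBCK}$; and the idempotency identity $u \imp (u \imp b) = u \imp b$, which is the reduced-language shadow of $u^2 = u$.

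Granting these, the rest is mechanical and copies the proof of Theorem~\ref{mth} verbatim. Define $f(x) = u \imp x$. Lemma~\ref{idempo}(1) makes $f$ an endomorphism of the $\imp$-reduct, and $u \imp (u \imp b) = u \imp b$ makes $f$ idempotent, since $f(f(b)) = u\imp(u\imp b) = u \imp b = f(b)$. Its kernel is $\th$, because $f(b)=1$ iff $u \imp b = 1$ iff $u \le b$ iff $b \in F$. Hence $\alg C = f(\alg B)$ is a subalgebra of $\alg B$ and $\alg A \cong \alg B/\th \cong \alg C$; composing with this isomorphism gives an injective $h\colon \alg A \longrightarrow \alg B$, and the idempotency of $f$ yields $(f(b),b)\in\op{ker}(f)=\th$, so that $g(b) = g(f(b))$ and therefore $g(h(a)) = a$ for every $a \in A$. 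Thus $\alg A$ is a retract of $\alg B$, and Proposition~\ref{prop:retractproj1} concludes.

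The one point that is not a mere transcription, and hence the main obstacle, is the justification of the two displayed identities and of $F=\upa u$ in the reduced language. In the hoop proof these are immediate: closure of $F$ under $\cdot$ forces the minimal element $u$ to satisfy $u^2=u$ (as $u^2\in F$ and $u^2\le u$), whence $F=\upa u$ and $u\imp(u\imp b)=u^2\imp b=u\imp b$ by currying. Since $\cdot$ is absent from the HBCK signature, I would recover these facts either from the intrinsic filter theory of $\mathsf{HBCK}$ developed in \cite{Ferr2000}, or, failing that, by embedding the finite HBCK-algebra into the $\imp$-reduct of a hoop via the subreduct description of \cite{Ferr2000} together with the embedding theorem of \cite{OnoKomori1985}, choosing $u$ as the idempotent power of a minimal filter generator there and transferring the resulting implicational equations back down. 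Everything else is identical to the proof of Theorem~\ref{mth}.
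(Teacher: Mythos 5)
Your overall strategy is indeed the paper's: its entire proof of Theorem \ref{thm:hbck} is the remark that Theorem \ref{mth} ``follows with the same proof in the reduced language'', and your transcription of that proof is faithful. But the obstacle you flag at the end is not a technicality that can be outsourced to the literature; it is a genuine gap, and neither of your two proposed repairs closes it. Without the product, a deductive filter of a finite HBCK-algebra need not have a minimum, nor be generated by a single element, so ``$F=\upa u$'' is not merely unjustified --- it is false in general. Concretely, let $\alg H$ be (the $0$-free hoop reduct of) the ordinal sum $\alg 2 \oplus \alg 4 \in \mathsf{HL}$ (Proposition \ref{prop:ordsumexists}), write $a,b$ for the two atoms of the Boolean component and $0$ for the bottom of $\alg H$, and put $B=\{0,a,b,1\}$. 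Since in $\alg H$ one has $a \imp b = b$, $b \imp a = a$ and $a \imp 0 = b \imp 0 = 0$, the set $B$ is closed under $\imp$, so it is a finite HBCK-algebra $\alg B$ (note $B$ is \emph{not} closed under the ambient product: $ab \notin B$, which is exactly why the hoop argument breaks). The map $g$ sending $0 \mapsto 0$ and $a,b,1 \mapsto 1$ is a surjective homomorphism onto the $\imp$-reduct of $\alg 2$, and $F = 1/\op{ker}(g) = \{a,b,1\}$ has two incomparable minimal elements and no minimum; moreover the deductive filter generated by $a$ alone is $\{a,1\}$ (because $a \imp^n b = b \neq 1$ for all $n$), so $F$ is not principal. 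Hence every map of the form $u \imp (-)$ has $1$-kernel $\upa u \cap B \neq F$; your Fix 1 aims at a false statement, and your Fix 2 presupposes a single (``minimal'') generator of $F$, which does not exist here --- and in addition the embeddings supplied by \cite{Ferr2000} and \cite{OnoKomori1985} land in a possibly \emph{infinite} hoop, where the required idempotent power need not exist at all. (The theorem itself survives in this example: $\{0,1\}$ is a subalgebra, so $\alg 2$ is a retract of $\alg B$, just not via your construction. For the same reason, the inference ``Lemma \ref{idempo}(1) is purely implicational, hence valid in $\mathsf{HBCK}$'' is too quick: its hypothesis $u^2=u$ is not expressible on $u$ in the reduced language, and conditional statements with universally quantified premises do not pass to subreducts.)

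What a correct completion needs is a construction that handles non-principal filters, and the natural route is finiteness of an \emph{ambient} hoop rather than of $\alg B$ alone: by the finite embeddability property of bounded hoops --- which the paper itself invokes in the proof of Lemma \ref{lemma:semantical}, citing \cite{BlokFerr2000} --- every finite HBCK-algebra $\alg B$ embeds as an $\imp$-subreduct into a finite hoop $\alg H$. There one takes $u$ to be the idempotent power of the product of \emph{all} elements of $F$; then $u \imp x$ is an iterated implication term in elements of $B$, so $u \imp (-)$ maps $B$ into $B$ and, by Lemma \ref{idempo} applied in $\alg H$, restricts to an idempotent endomorphism of $\alg B$; finally $\{x \in B : u \le x\} = F$ precisely because $F$ is closed under modus ponens. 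With this $u$ in place of your minimal element, the rest of your transcription of Theorem \ref{mth} goes through verbatim. This extra ingredient is genuinely needed, so the gap sits not only in your proposal but equally in the paper's own one-line proof.
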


In order to transfer our argument to full hoops or to $\mathsf{HL}$-algebras we have to take care of the join. The easiest case is the one in which the join is definable, which is equivalent to saying that the hoop satisfies the equation (PJ) (see \cite{Agliano2018c}).  Thus, prelinear hoops are full hoops (i.e., they are residuated lattices) and the conclusion applies.

\begin{corollary}\label{cor:projlfbasichoops}
	Every finite basic hoop is finitely projective in the variety of basic hoops, hence in any locally finite variety of basic hoops every finitely presented (equivalently, finite) algebra is projective.
\end{corollary}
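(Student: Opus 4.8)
The plan is to reduce the statement to Theorem \ref{mth} by exploiting the fact, recalled just above, that in a basic hoop the join is term-definable: by (PJ) we have
$$((x \imp y) \imp y) \meet ((y \imp x) \imp x) \app x \join y,$$
and $\meet$ is itself definable as $a \meet b = (a \imp b)a$, so $\join$ is given by a term $t(x,y)$ in the hoop signature $\{\cdot,\imp,1\}$. First I would record the immediate consequence: any homomorphism of hoops between two basic hoops automatically preserves $\join$ (homomorphisms preserve terms), and is therefore a homomorphism of basic hoops, while conversely every basic-hoop homomorphism is in particular a hoop homomorphism. In short, restricting to (finite) basic hoops, passing between the basic-hoop signature and the hoop signature changes neither the objects nor the morphisms.

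With this in hand, let $\alg A,\alg B$ be finite basic hoops and let $g\colon \alg B \longrightarrow \alg A$ be a surjective homomorphism of basic hoops. Viewing $g$ as a surjective hoop homomorphism, I would run verbatim the construction in the proof of Theorem \ref{mth}: set $F = 1/\op{ker}(g)$, pick its minimal (idempotent) element $u$, and define $f(x) = u \imp x$. By Lemma \ref{idempo} this is an idempotent hoop endomorphism of $\alg B$ with $\op{ker}(f)=\op{ker}(g)$, so its image $\alg C = f(\alg B)$ is a sub-hoop of $\alg B$, there is an isomorphism $h\colon \alg A \longrightarrow \alg C$ realizing $\alg A \cong \alg B/\op{ker}(g) \cong \alg C$, and $g\circ h = \op{id}_\alg A$; hence $\alg A$ is a retract of $\alg B$ in the variety of hoops.

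The one point that needs checking — and which I expect to be the only real obstacle — is that this retract datum lives in $\BH$ and not merely in the variety of hoops, but this is exactly where definability of $\join$ pays off. Since $\alg C$ is closed under the hoop operations and $c_1 \join c_2 = t(c_1,c_2) \in \alg C$ for all $c_1,c_2 \in \alg C$, the set $\alg C$ is closed under $\join$ as well, so it is a subalgebra of $\alg B$ in the full basic-hoop signature and hence a basic hoop (basic hoops form a variety). For the same reason $f$ and $h$ preserve $\join$ and are therefore basic-hoop homomorphisms, so $\alg A$ is a retract of $\alg B$ \emph{in} $\BH$. As $\alg A,\alg B$ were arbitrary finite basic hoops, Proposition \ref{prop:retractproj1} applied to the class of finite basic hoops gives that every finite basic hoop is finitely projective; and in a locally finite subvariety of $\BH$ the finite algebras are precisely the finitely presented ones, so there every finitely presented (equivalently, finite) algebra is projective.
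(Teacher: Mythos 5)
Your proof is correct and takes essentially the same route as the paper: the corollary is justified there by the remark immediately preceding it, namely that (PJ) makes the join term-definable in basic hoops, so that the retraction built in the proof of Theorem \ref{mth} automatically lives in the full signature and ``the conclusion applies.'' Your write-up simply makes explicit the checks the paper leaves implicit (closure of the image $\alg C$ under $\join$ and join-preservation of the maps $f$ and $h$), which is exactly the intended argument.
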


If we remove the hypothesis of being locally finite, the previous result does not hold. Indeed, for instance, not all finitely presented Wajsberg hoops are projective, as shown in \cite{Ugolini2022}. However, there is another interesting variety of hoops that is not locally finite, but for which the same property holds: the variety of cancellative hoops. {\em Cancellative hoops} are basic hoops satisfying the cancellativity law:
$$ x \to (x y) = y . \qquad (canc)$$
Cancellative hoops can be seen as negative cones of abelian lattice ordered abelian groups (or {\em $\ell$-groups} for short), and actually, they are categorically equivalent to $\ell$-groups \cite{DiNolaLettieri1994}. Now, projective and finitely generated $\ell$-groups coincide with finitely presented $\ell$-groups, as shown in \cite{Beynon1977}.
Just as the properties of being projective and finitely presented, in every variety also the concept of an algebra being finitely generated is categorical, i.e., it can be described in the abstract categorical setting by properties of morphisms (see Theorem 3.11 and 3.12 in [1]), and thus all these notions are preserved by categorical equivalences. Therefore, we have the following result.

\begin{proposition}\label{prop:cancproj}
Finitely presented cancellative hoops are exactly the finitely generated and projective in their variety.
\end{proposition}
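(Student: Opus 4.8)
The plan is to transfer the known result for $\ell$-groups to cancellative hoops via the categorical equivalence established in \cite{DiNolaLettieri1994}, relying on the fact that the three relevant notions—being projective, being finitely presented, and being finitely generated—are all categorical and therefore preserved by the equivalence. The target statement is a biconditional: a cancellative hoop is finitely presented if and only if it is finitely generated and projective in the variety of cancellative hoops.

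First I would record the starting point on the $\ell$-group side. By \cite{Beynon1977}, an $\ell$-group is finitely presented if and only if it is finitely generated and projective; equivalently, the finitely presented $\ell$-groups coincide exactly with the finitely generated projective ones. This is precisely the statement we want, transported across the functor. So the core of the argument is not a fresh computation but a transport lemma.

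Next I would invoke the preservation facts. The paper has already emphasized (in the Preliminaries and again just before this Proposition) that projectivity is preserved under categorical equivalence, that finite presentability is a categorical notion \cite{GabrielUllmer1971} and hence likewise preserved, and—citing Theorems 3.11 and 3.12 of the reference—that finite generation is categorical as well. Let $\Phi$ denote the equivalence functor from cancellative hoops to $\ell$-groups (or its inverse, as convenient). Given a cancellative hoop $\alg A$, the object $\Phi(\alg A)$ is a finitely presented $\ell$-group exactly when $\alg A$ is finitely presented, is finitely generated exactly when $\alg A$ is, and is projective exactly when $\alg A$ is projective. Then I would simply chase the biconditional: $\alg A$ finitely presented $\iff$ $\Phi(\alg A)$ finitely presented $\iff$ (by \cite{Beynon1977}) $\Phi(\alg A)$ finitely generated and projective $\iff$ $\alg A$ finitely generated and projective. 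This closes the proof.

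The main obstacle here is not difficulty but care: one must be sure that \emph{each} of the three properties is genuinely categorical and that the cited equivalence is an equivalence of the relevant algebraic categories (so that \emph{projective in the variety} on one side corresponds to \emph{projective in the variety} on the other, via Theorem \ref{whitman}'s retract-of-free characterization, which is itself categorical once one notes that free objects correspond to free objects under an equivalence). The subtle point worth a sentence of justification is that projectivity as defined at the start of the paper—a lifting property against regular epimorphisms—is exactly the categorical notion in these algebraic categories, so the equivalence carries it faithfully. Beyond that, there is no computation to grind through; the result is a clean corollary of \cite{Beynon1977} combined with the categorical nature of the three notions and the equivalence of \cite{DiNolaLettieri1994}.
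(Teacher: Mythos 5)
Your proposal is correct and is essentially identical to the paper's own argument: the authors likewise derive the proposition by combining Beynon's theorem that finitely presented $\ell$-groups are exactly the finitely generated projective ones with the categorical equivalence between cancellative hoops and abelian $\ell$-groups, noting that projectivity, finite presentability, and finite generation are all categorical notions preserved under the equivalence. No gap; your extra remark about why projectivity (lifting against regular epimorphisms) is genuinely categorical is exactly the justification the paper sketches in its preliminaries.
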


Are there non prelinear varieties of commutative and integral residuated lattices for which an analogous of Theorem \ref{mth} holds? We do not know; there are results in the literature (for instance Lemma 8.2 in \cite{OlsonRafVanAlten2008}) that suggest that one would need to use different proof techniques than the ones considered here.

If we add the constant $0$ to the signature to represent the least element of the considered structures, the argument used above does not work. Indeed, in Theorem \ref{mth}, given any surjective homomorphism to a finite algebra, we define an embedding that testifies the retraction which is not necessarily preserving the lower bound.
 However, we can prove the following (weaker) result.

\begin{theorem}\label{cor:projlfBL}
	Every finite bounded hoop  is finitely zero-projective in the variety of bounded hoops; hence in  any locally finite variety of bounded hoops every finitely presented algebra is zero-projective.
\end{theorem}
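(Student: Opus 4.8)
The plan is to mimic the proof of Theorem \ref{mth}, repairing the single point where that argument breaks in the bounded setting—the failure of the retraction map $x \mapsto u \imp x$ to preserve $0$—by exploiting precisely the hypothesis $g^{-1}(0) = \{0\}$ that is built into the definition of zero-projectivity.

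First I would record a zero-projective analogue of Proposition \ref{prop:retractproj1}: if, in a class $\vv K$, every $\alg B \in \vv K$ is a retract of every $\alg A \in \vv K$ of which it is a homomorphic image \emph{via a surjection $g$ with $g^{-1}(0) = \{0\}$}, then every member of $\vv K$ is zero-projective in $\vv K$. The proof is verbatim the one of Proposition \ref{prop:retractproj1}: given $h \colon \alg C \to \alg B$ and a surjection $g \colon \alg A \to \alg B$ with $g^{-1}(0) = \{0\}$, the hypothesis yields an injective $f' \colon \alg B \to \alg A$ with $gf' = \op{id}_\alg B$, and $f = f'h$ does the job. Taking $\vv K$ to be the class of finite bounded hoops, it therefore suffices to prove: whenever $g \colon \alg B \to \alg A$ is a surjective homomorphism of finite bounded hoops with $g^{-1}(0) = \{0\}$, then $\alg A$ is a retract of $\alg B$ in the variety of bounded hoops.

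For this I run the argument of Theorem \ref{mth}. With $F = 1/\op{ker}(g)$ a filter of the finite algebra $\alg B$, take its least element $u$ (which is idempotent, since $u^2 \in F$ and $u^2 \le u$ force $u^2 = u$) and set $f(x) = u \imp x$. By Lemma \ref{idempo} this is an idempotent endomorphism for $\cdot$ and $\imp$ (hence it preserves $\meet$ and $1$ as well), with $\op{ker}(f) = \op{ker}(g)$ and $f(\alg B) \cong \alg B/\op{ker}(g) \cong \alg A$. The \textbf{crucial new point}, and the step I expect to be the main obstacle, is to check that $f$ also preserves $0$, i.e.\ that $f(0) = u \imp 0 = \neg u$ equals $0$. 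This is exactly where the restriction $g^{-1}(0) = \{0\}$ is used. Under the filter--congruence correspondence recalled in the preliminaries, $g^{-1}(0) = \{b : \neg b \in F\}$, since $(b,0) \in \cg_F$ iff $\neg b \in F$ (as $0 \imp b = 1$). Now $u \in F$ and $u \le \neg\neg u$ (valid in $\mathsf{FL_{ew}}$), so $\neg\neg u \in F$ because $F$ is an upset; taking $b = \neg u$ gives $\neg b = \neg\neg u \in F$, whence $b \in g^{-1}(0) = \{0\}$, that is $\neg u = 0$. Thus $f(0) = 0$, so $f$ is a genuine bounded-hoop endomorphism, its image is a bounded subalgebra isomorphic to $\alg A$, and the resulting isomorphism $h \colon \alg A \to f(\alg B) \hookrightarrow \alg B$ is a $0$-preserving section of $g$, exactly as in Theorem \ref{mth}. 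This exhibits $\alg A$ as a retract of $\alg B$, and by the analogue above every finite bounded hoop is zero-projective in the class of finite bounded hoops.

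Finally, for the locally finite clause I would use the routine reduction to finite algebras. Let $\vv V$ be locally finite, so finitely presented equals finite, and let $\alg A \in \vv V$ be finite with generators $\bar x_1, \dots, \bar x_n$; given $h \colon \alg A \to \alg C$ and a surjection $g \colon \alg B \to \alg C$ with $g^{-1}(0) = \{0\}$, choose preimages $b_i \in g^{-1}(h(\bar x_i))$, let $\alg B_0 \le \alg B$ be the finite bounded subalgebra they generate, and set $\alg C_0 = g(\alg B_0)$. Then $h$ corestricts to a map $\alg A \to \alg C_0$, the map $g$ restricts to a surjection $\alg B_0 \to \alg C_0$, and since $0_{\alg B_0} = 0_{\alg B}$ this restriction still satisfies $g^{-1}(0) = \{0\}$. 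Finite zero-projectivity of $\alg A$ yields $f_0 \colon \alg A \to \alg B_0$ with $g f_0 = h$, and composing with the inclusion $\alg B_0 \hookrightarrow \alg B$ completes the proof.
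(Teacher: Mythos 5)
Your proof is correct and follows essentially the same route as the paper: both run the retraction argument of Theorem \ref{mth} on the $0$-free reducts and then use the hypothesis $g^{-1}(0)=\{0\}$ to recover preservation of $0$, finishing via the zero-projective analogue of Proposition \ref{prop:retractproj1}. The only minor difference is where that hypothesis enters --- the paper simply observes that $gh=\op{id}$ gives $g(h(0))=0$, hence $h(0)=0$ for the section $h$, whereas you prove the slightly stronger fact that $\neg u=0$ (via the filter description of $g^{-1}(0)$), so that the endomorphism $x\mapsto u\imp x$ itself preserves $0$; both patches are valid, and your explicit reduction of the locally finite clause to the finite case is a correct rendering of what the paper leaves implicit.
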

\begin{proof}
	Let $\alg A,\alg B$ be finite bounded hoops and let $g\colon \alg B \longrightarrow \alg A$ be a surjective homomorphism such that $g^{-1}(0) =0$. Then, by the same argument as Theorem \ref{mth}, we can find an endomorphism $f$ of the $0$-free reduct of $\alg B$ such that $\alg A$ and $f(\alg B)$ are isomorphic through $h$ as $0$-free structures. It only remains to be checked that the map $h$ is a homomorphism in the full signature. Since we showed that $gh$ is the identity, $g(h(0)) = 0$, which implies that $h(0) = 0$ since $g^{-1}(0) = 0$.
Hence the conclusion follows.
\end{proof}

Prelinear bounded hoops are (termwise equivalent to) BL-algebras so we get:

\begin{corollary} Every finite BL-algebra  is finitely zero-projective in the variety of BL-algebras; hence in  any locally finite variety of BL-algebras every finitely presented algebra is zero-projective.
\end{corollary}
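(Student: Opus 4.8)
The plan is to deduce the statement from Theorem~\ref{cor:projlfBL} by exploiting the termwise equivalence between $\BL$-algebras and prelinear bounded hoops, exactly as the sentence preceding the corollary suggests. As already recorded, prelinearity makes the join definable through the identity (PJ), so a $\BL$-algebra is nothing but a prelinear bounded hoop in which $\join$ has been adjoined as a derived operation. Consequently a map between two $\BL$-algebras is a $\BL$-homomorphism precisely when it is a homomorphism of the underlying bounded hoops: any such map automatically respects the term (PJ) that defines $\join$. This correspondence is what I would use to pass freely between the two signatures.

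First I would take a finite $\BL$-algebra $\alg A$ and, regarding it as a finite (prelinear) bounded hoop, invoke Theorem~\ref{cor:projlfBL} to obtain that $\alg A$ is finitely zero-projective in the variety of bounded hoops. Next, given a zero-projectivity diagram in $\BL$ --- finite $\BL$-algebras $\alg B,\alg C$, a homomorphism $h\colon\alg A\longrightarrow\alg C$, and a surjection $g\colon\alg B\longrightarrow\alg C$ with $g^{-1}(0)=\{0\}$ --- I would reread the same diagram in the variety of bounded hoops, where $\alg B,\alg C$ become finite bounded hoops and $h,g$ become bounded-hoop homomorphisms with $g$ still surjective and still satisfying $g^{-1}(0)=\{0\}$. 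Finite zero-projectivity of $\alg A$ among bounded hoops then yields a bounded-hoop homomorphism $f\colon\alg A\longrightarrow\alg B$ with $gf=h$; since $\alg A$ and $\alg B$ are prelinear, $f$ preserves the definable join and is therefore a $\BL$-homomorphism, establishing the first claim. The second claim is then immediate, since in a locally finite variety every finitely presented algebra is finite and thus falls under the first part.

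The only genuine subtlety --- and the step I would check most carefully --- is that zero-projectivity really transfers along this correspondence, for which two points must be verified. One is that passing from the whole variety of bounded hoops down to its subvariety of prelinear members causes no loss: the witnesses $\alg B,\alg C$ and the lifted map $f$ all live among prelinear bounded hoops, so the lifting guaranteed by Theorem~\ref{cor:projlfBL} automatically stays inside $\BL$. The other is that the side condition $g^{-1}(0)=\{0\}$, which distinguishes zero-projectivity from ordinary projectivity, is phrased purely in terms of the constant $0$ and is therefore unaffected by adjoining or forgetting the derived operation $\join$; this is precisely what lets the diagram be transported between the two signatures unchanged. Once these are in place the argument is a routine transfer along a termwise equivalence, entirely parallel to the passage from hoops to basic hoops used for Corollary~\ref{cor:projlfbasichoops}.
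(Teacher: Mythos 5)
Your proposal is correct and is essentially the paper's own argument: the paper derives this corollary in one line from Theorem~\ref{cor:projlfBL} via the termwise equivalence between prelinear bounded hoops and BL-algebras, which is exactly the transfer you carry out (your verification that hoop homomorphisms between prelinear algebras preserve the join defined by (PJ), and that the condition $g^{-1}(0)=\{0\}$ is signature-independent, is just the detail the paper leaves implicit).
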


  Since any projective algebra is zero-projective, it is sensible to ask if it possible to characterize the projective algebras  in a different way. We have seen that $\alg 2$ is projective in every subvariety of $\mathsf{FL_{ew}}$ (Lemma \ref{lemma:2proj}), since it is the free algebra over the empty set of generators. Moreover, the following holds.

\begin{lemma}\label{lemma:2retract}
	$\alg 2$ is a retract of every free algebra in every subvariety of $\mathsf{FL_{ew}}$.
\end{lemma}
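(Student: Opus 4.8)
The plan is to exploit the fact, already recorded in the excerpt, that $\alg 2$ is the free algebra $\alg F_\vv V(\emptyset)$ on the empty set in every nontrivial subvariety $\vv V$ of $\mathsf{FL_{ew}}$. Concretely, I would first observe that the subalgebra of $\alg F_\vv V(X)$ generated by $\emptyset$ is exactly $\{0,1\}$: the only constants in the signature are $0$ and $1$, and the value of every operation on these two elements stays inside $\{0,1\}$ (for instance $0\cdot 1 = 0$, $1 \imp 0 = 0$, $0 \imp 1 = 1$, $0 \join 1 = 1$, $0 \meet 1 = 0$). Since $\vv V$ is nontrivial we have $0 \neq 1$ in $\alg F_\vv V(X)$, so this subalgebra is isomorphic to $\alg 2$, and the inclusion furnishes an embedding $f : \alg 2 \longrightarrow \alg F_\vv V(X)$ with $f(0) = 0$ and $f(1) = 1$.

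Next I would produce a retraction $g : \alg F_\vv V(X) \longrightarrow \alg 2$. Because $\alg F_\vv V(X)$ is free on $X$ and $\alg 2 \in \vv V$, any assignment of the generators into $\alg 2$ extends uniquely to a homomorphism; so I would simply send every $x \in X$ to $1$ (sending them all to $0$ works equally well) and let $g$ be the induced homomorphism. As $g$ preserves the constants, automatically $g(0) = 0$ and $g(1) = 1$.

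It then remains to check $g \circ f = \op{id}_{\alg 2}$, which is immediate: $g(f(0)) = g(0) = 0$ and $g(f(1)) = g(1) = 1$. Alternatively, one may argue purely categorically: $g \circ f$ and $\op{id}_{\alg 2}$ are both homomorphisms $\alg 2 \longrightarrow \alg 2$, and since $\alg 2 = \alg F_\vv V(\emptyset)$ is initial in $\vv V$ there is exactly one such homomorphism, so the two coincide. Hence $\alg 2$ is a retract of $\alg F_\vv V(X)$, as desired.

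There is essentially no real obstacle here; the lemma is a soft consequence of $\alg 2$ being both the $\emptyset$-generated subalgebra and the initial object of $\vv V$. The only point requiring a moment's care is verifying that the two-element subalgebra generated by the constants really is all of $\{0,1\}$ and is nondegenerate, and this relies solely on the nontriviality of $\vv V$ together with the preservation of the constants $0,1$ by every homomorphism.
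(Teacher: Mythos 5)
Your proposal is correct and follows essentially the same route as the paper: exhibit $\{0,1\}$ as a subalgebra of the free algebra isomorphic to $\alg 2$, and obtain the retraction from any homomorphism (guaranteed by freeness) sending the generators into $\{0,1\}$. The only addition beyond the paper's argument is your explicit verification of the subalgebra claim and the remark on nontriviality, which the paper leaves implicit.
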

\begin{proof}
$\alg 2$ is (isomorphic to) a subalgebra of every free algebra $\alg F$ in every subvariety of $\mathsf{FL_{ew}}$.
	The retraction is then testified by the inclusion map and any (necessarily surjective) homomorphism that maps the generators of $\alg F$ to $\{0,1\}$.
\end{proof}
From this fact we derive another property of projective $\mathsf{FL_{ew}}$-algebras.

\begin{proposition}\label{lemma:2homimage}
Let $\vv V$ be a variety of $\mathsf{FL_{ew}}$-algebras. If $\alg A$ is projective in $\vv V$, then $\alg A$ has $\alg 2$ as an homomorphic image.
\end{proposition}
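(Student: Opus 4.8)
The plan is to exploit the fact, just established in Lemma \ref{lemma:2retract}, that $\alg 2$ is a retract of every free algebra in any subvariety of $\mathsf{FL_{ew}}$, together with the retract characterization of projectivity from Theorem \ref{whitman}. Since $\alg A$ is projective in $\vv V$, by Theorem \ref{whitman} it is a retract of a free algebra $\alg F = \alg F_\vv V(X)$ for some set $X$; thus there are homomorphisms $f: \alg A \longrightarrow \alg F$ and $g: \alg F \longrightarrow \alg A$ with $gf = \op{id}_\alg A$, where $g$ is surjective. The goal is to produce a surjective homomorphism from $\alg A$ onto $\alg 2$.

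First I would fix the retraction $(f,g)$ witnessing that $\alg A$ is a retract of $\alg F$. Next, by Lemma \ref{lemma:2retract}, $\alg 2$ is a retract of $\alg F$, so there is a surjective homomorphism $p: \alg F \longrightarrow \alg 2$. The natural candidate for the desired map is then $q = p \circ f : \alg A \longrightarrow \alg 2$, which is certainly a homomorphism; the only thing to verify is that it is surjective, i.e.\ that $q$ is nonconstant, since any homomorphism onto a subalgebra of $\alg 2$ containing both $0$ and $1$ is automatically onto $\alg 2$. Because $\alg A$ is nontrivial (it maps onto itself via $gf$ and retracts a free algebra that has $\alg 2$ as a quotient), and every homomorphism in $\mathsf{FL_{ew}}$ preserves $0$ and $1$, one checks that $q(1_\alg A) = 1$ and $q(0_\alg A) = 0$, so $q$ hits both elements and is therefore surjective.

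The step requiring the most care is guaranteeing that $q$ is genuinely surjective rather than collapsing to the constant map $1$. I would argue this by choosing the retraction $p: \alg F \longrightarrow \alg 2$ so that it sends $0_\alg F$ to $0$ and $1_\alg F$ to $1$ (which it does, being a homomorphism of $\mathsf{FL_{ew}}$-algebras), and noting that $f$ preserves $0$ and $1$ as well, whence $q(0_\alg A) = p(f(0_\alg A)) = p(0_\alg F) = 0$ and $q(1_\alg A) = 1$. Thus $\{0,1\} \subseteq \op{range}(q)$, so the image of $q$ is all of $\alg 2$.

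Putting these together, $q = p \circ f$ is a surjective homomorphism $\alg A \longrightarrow \alg 2$, which exhibits $\alg 2$ as a homomorphic image of $\alg A$, completing the proof. I do not anticipate any serious obstacle here; the content is essentially a bookkeeping argument combining the retract form of projectivity with the previous lemma, and the only subtlety is the surjectivity check, which reduces to the preservation of the constants $0$ and $1$ by homomorphisms in $\mathsf{FL_{ew}}$.
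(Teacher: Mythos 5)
Your proof is correct and follows essentially the same route as the paper, which presents this proposition as an immediate consequence of Lemma \ref{lemma:2retract}: compose the embedding $f:\alg A \longrightarrow \alg F_\vv V(X)$ coming from projectivity (Theorem \ref{whitman}) with the surjection $\alg F_\vv V(X) \longrightarrow \alg 2$, surjectivity of the composite being automatic because homomorphisms of $\mathsf{FL_{ew}}$-algebras preserve the constants $0$ and $1$. Your parenthetical appeal to nontriviality of $\alg A$ is not actually needed, since the constant-preservation argument already yields both elements of $\alg 2$ in the image.
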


Now, any algebra that has a negation fixpoint, e.g., the totally ordered three-element Wajsberg algebra $\alg{\L}_3$, cannot have $\mathbf 2$ as homomorphic image. Thus, there are finite bounded residuated (semi)lattices that are zero-projective but not projective. However, the necessary condition turns out to be also sufficient for
bounded hoops, and hence for $\mathsf{BL}$-algebras.  We start with a technical lemma.

\begin{lemma}\label{lemma:semantical} The variety of bounded hoops satisfies the quasiequation
\begin{equation}
x^2 \app x \quad\Longrightarrow\quad  (x \imp y) \imp \neg\neg x \app \neg\neg x.\tag{Q}
\end{equation}
\end{lemma}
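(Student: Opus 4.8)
The plan is to verify the quasiequation $(\text{Q})$ directly in an arbitrary bounded hoop $\alg A$, working pointwise: fix $a, b \in A$ with $a^2 = a$ (so $a$ is idempotent), and prove that $(a \imp b) \imp \neg\neg a = \neg\neg a$. Since one inequality of any such identity involving $\imp$ is typically automatic from residuation (here $\neg\neg a \le (a \imp b) \imp \neg\neg a$ always holds, as $z \le w \imp z$ for all $z,w$), the real content is the reverse inequality $(a \imp b) \imp \neg\neg a \le \neg\neg a$. First I would unwind the notation: $\neg\neg a = (a \imp 0) \imp 0$, and I would keep in mind that in a hoop divisibility gives $x \meet y = (x \imp y) x$ and that the order is the inverse divisibility order.

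**Key steps in order.**
The main tool will be Lemma~\ref{idempo}, which tells us that for the idempotent element $a$, the map $x \mapsto a \imp x$ is a homomorphism: in particular $a \imp (u \imp v) = (a \imp u) \imp (a \imp v)$. The crucial observation is that $a \imp \neg\neg a = \neg\neg a$ is not quite what we want; instead I would aim to relate $(a \imp b) \imp \neg\neg a$ to an expression I can collapse using idempotency of $a$. A natural route: since $a \le a \imp b$ fails in general, I would instead exploit that $a$ being idempotent makes $\neg\neg a$ behave like a $0$ relative to the ``block above $a$''. Concretely, I expect to show $a \meet \big((a\imp b)\imp \neg\neg a\big) \le \neg\neg a$ and $(a \imp b) \imp \neg\neg a \le a \imp \neg\neg a = \neg\neg a$, the last equality again coming from Lemma~\ref{idempo} applied to $\neg\neg a = (a\imp 0)\imp 0$, namely $a \imp \neg\neg a = a \imp ((a\imp 0)\imp 0) = (a \imp (a\imp 0)) \imp (a \imp 0) \imp (a \imp 0)$ — which simplifies because $a \imp (a \imp 0) = a^2 \imp 0 = a \imp 0$. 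Thus the strategy reduces to establishing $(a \imp b) \imp \neg\neg a \le a \imp \neg\neg a$, which would follow if $a \le a \imp b$, i.e.\ if $a \cdot a \le b$; but that need not hold, so I anticipate needing a more careful argument that does not assume $a \le a \imp b$.

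**The main obstacle and how I'd handle it.**
The hard part will be the reverse inequality without assuming anything about $b$. My fallback plan is to work with $a \imp b$ directly and use that $a$ idempotent forces $a \imp b$ to satisfy $a \cdot (a \imp b) = a \meet b$ and hence $(a \imp b) \ge a \meet b$ interacts with $a$ in a controlled way. The cleanest approach may be to prove the \emph{equivalent} identity $a \imp \big((a \imp b) \imp \neg\neg a\big) = a \imp \neg\neg a$ using Lemma~\ref{idempo}'s homomorphism property (rewriting the left side as $(a \imp (a \imp b)) \imp (a \imp \neg\neg a)$ and simplifying $a \imp (a \imp b) = a^2 \imp b = a \imp b$), and then to argue that applying $a \imp (-)$ is \emph{injective} on the relevant elements because they all lie above $a$ (equivalently, above the idempotent $a$ the map $x \mapsto a \imp x$ is the identity, since $a \imp x = x$ whenever $a \le x$ — and both $\neg\neg a$ and $(a\imp b)\imp \neg\neg a$ dominate $\neg\neg a \ge a$). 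That last point — that $x \mapsto a\imp x$ fixes every element above the idempotent $a$ — is exactly Lemma~\ref{Heyting}(1)-style reasoning transported to hoops via idempotency, and I expect it to be the linchpin that makes the whole quasiequation collapse to a triviality once set up correctly.
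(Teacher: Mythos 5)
Your strategy has a fatal computational error at its core. You claim $a \imp \neg\neg a = \neg\neg a$, but in any bounded hoop (indeed, in any integral residuated structure) $a \le \neg\neg a$, hence $a \imp \neg\neg a = 1$. Your own computation shows this: by Lemma~\ref{idempo}, $a \imp ((a \imp 0) \imp 0) = (a \imp (a \imp 0)) \imp (a \imp 0) = (a \imp 0) \imp (a \imp 0) = 1$, not $\neg\neg a$. The same problem kills the fallback plan: since both $\neg\neg a$ and $(a \imp b) \imp \neg\neg a$ lie above $a$, applying $a \imp (-)$ sends both to $1$, so your ``equivalent identity'' $a \imp \bigl((a\imp b)\imp\neg\neg a\bigr) = a \imp \neg\neg a$ is just $1 = 1$ and carries no information. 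And the claim you call the linchpin --- ``$a \imp x = x$ whenever $a \le x$'' --- is false: residuation gives $a \imp x = 1$ whenever $a \le x$, so far from being the identity (or injective) on the upset of $a$, the map $x \mapsto a \imp x$ collapses that entire upset to $1$. Lemma~\ref{Heyting}(1), which you invoke, says something different: for a \emph{fixed} $b$ it transfers the property $a \imp b = b$ from $a$ to larger elements $c$; it does not fix points above $a$.

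Beyond these specific errors, the overall approach (a direct equational computation valid in every bounded hoop) is quite different from what the paper does, and the paper's choice is telling. The paper proves (Q) semantically: bounded hoops have the finite embeddability property, so the quasivariety they generate is determined by its finite members; it therefore suffices to verify (Q) in every finite subdirectly irreducible bounded hoop, and these decompose (by Blok--Ferreirim) as ordinal sums $\alg F \oplus \alg S$ with $\alg S$ a totally ordered Wajsberg hoop. The paper then inducts on cardinality, handling the cross-component cases by ordinal sum arithmetic (where, e.g., $x \in S$ idempotent forces $\neg\neg x = 1$). If you want to salvage a direct syntactic proof, you need a genuinely new idea for the inequality $(a \imp b) \imp \neg\neg a \le \neg\neg a$; note that the authors themselves remark, about the closely related Lemma~\ref{lemma:dzik}, that they were unable to derive such facts syntactically --- a strong hint that the semantic route is the practical one here.
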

\begin{proof} It is well-known that the variety of bounded hoops has the FEP \cite{BlokFerr2000} and therefore it is generated as a quasivariety by its finite algebras; since
any finite algebra is a subdirect product of finite subdirectly irreducible algebras it is enough to prove (Q) for all the finite subdirectly irreducible bounded hoops.
Now \cite{BlokFerr2000} any finite subdirectly irreducible bounded hoop $\alg A$ is an ordinal sum $\alg F \oplus \alg S$ where $\alg S$ is a finite totally ordered Wajsberg hoop
and $\alg F$ is a (possibly trivial) bounded hoop. 

We observe that in a finite totally ordered Wajsberg hoop the only idempotents are $1$ and the bottom element so (Q) is satisfied.
We will prove the thesis by induction on the size of $\alg A$; first the only two-element bounded hoop is $\mathbf 2$ and the only three-element subdirectly irreducible bounded hoop are
$\mathbf 2 \oplus \mathbf 2$ and the three-element Wajsberg chain. 
 (Q) holds in both these structures by inspection. Now suppose we have proved the statement for all subdirectly irreducible algebras
of size $\le n$ and let $\alg A = \alg F \oplus \alg S$ with $|A|=n+1$. If $\alg F$ is trivial , then $\alg A \cong \alg S$ and (Q) holds; otherwise $|F|\le n$ and $\alg F$ is a subdirect product of subdirectly
irreducible algebras of size $\le n$ for which (Q) holds by induction hypothesis. Hence (Q) holds in $\alg F$ as well.  Finally, if $x \in S, x^2 = x,$  and $y \in F$ then $\neg\neg x =1$ and
$$
(x \imp y) \imp \neg\neg x = y \imp 1 =1 = \neg\neg x;
$$
if $x \in F, x^2 = x,$ and $y \in S$, then
$$
(x \imp y) \imp \neg\neg x = 1 \imp \neg\neg x = \neg\neg x.
$$
This concludes the induction and proves the thesis.
\end{proof}

If $\alg A$ is any bounded hoop, then the double negation is a $\cdot$-homomorphism, that is, for all $a,b \in A$
$$
\neg\neg(a\cdot b) = \neg\neg a \cdot \neg\neg b.
$$
This has been shown by Cignoli and Torrens in \cite[Theorem 4.8]{CT04}.
It follows that if $u \in A$ is idempotent, so is $\neg\neg u$.

The following lemma was stated in \cite{Dzik2008} for $\mathsf{BL}$-algebras without proof. While we were not able to derive it syntactically, using Lemma \ref{lemma:semantical} we show that the conclusions hold for bounded hoops (i.e., prelinearity and the join are not needed).

\begin{lemma}\label{lemma:dzik} If $\alg A$ is  a bounded hoop, $a,b,u \in \alg A$ and $u$ is idempotent, then
\begin{align*}
ua = u \meet a \tag{U1}\\
u \imp ua = u\imp a \tag{U2}\\
\neg\neg u((u \imp a) \imp (u \imp b))&= (u \imp a) \imp \neg\neg u(u \imp b)\tag{U3}\\
(u \imp a) \imp (u \imp b) &= \neg\neg u(u \imp a) \imp \neg\neg u (u \imp b) \\
&=  \neg\neg u(u \imp a) \imp (u \imp b).\tag{U4}
\end{align*}
\end{lemma}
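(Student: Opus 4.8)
The plan is to establish the four identities in the order (U2), (U1), (U3), (U4), each feeding into the next, and to lean throughout on the machinery already assembled: divisibility (so that $c(c\imp d)=c\meet d$, and in particular $u\meet a=u(u\imp a)$), both identities of Lemma \ref{idempo}, the quasiequation (Q) of Lemma \ref{lemma:semantical}, the Cignoli--Torrens observation that $\neg\neg u$ is idempotent whenever $u$ is, and plain residuation/integrality. Throughout I write $v=\neg\neg u$, $c=u\imp a$, $d=u\imp b$, and I use freely that $v$ is idempotent and that $u\le v$.

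For (U2): this is nothing but the second identity of Lemma \ref{idempo} with the two arguments taken to be $u$ and $a$, since $u\imp ua=(u\imp u)(u\imp a)=1\cdot(u\imp a)=u\imp a$. For (U1): integrality gives $ua\le u\meet a$ at once. For the converse I would rewrite $u\meet a=u(u\imp a)$ by divisibility and then observe, via residuation, that $u(u\imp a)\le ua$ is equivalent to $u\imp a\le u\imp(ua)$; but the right-hand side equals $u\imp a$ by (U2), so the inequality is trivial and $u\meet a\le ua$ follows.

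For (U3) I first record that $v$ is idempotent. The inequality $v(c\imp d)\le c\imp vd$ is completely general: by residuation it amounts to $v(c\imp d)c\le vd$, and $(c\imp d)c=c\meet d\le d$ by divisibility gives it. The reverse inequality is where the work lies and where (Q) enters. Since $vd\le v$ and $vd\le d$, monotonicity of $\imp$ yields $c\imp vd\le(c\imp v)\meet(c\imp d)$; now (Q) applied to the idempotent $u$ says exactly that $c\imp v=(u\imp a)\imp\neg\neg u=\neg\neg u=v$, so $c\imp vd\le v\meet(c\imp d)$, and finally (U1) converts this meet into the product $v(c\imp d)$. This yields (U3).

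For (U4) the pivotal observation is that $vc$ is a genuine product, so currying (residuation) gives $vc\imp d=v\imp(c\imp d)$ and $vc\imp vd=v\imp(c\imp vd)$. Rewriting $c\imp d=u\imp(a\imp b)$ by the first identity of Lemma \ref{idempo}, and using $vu=u$ (which holds because $u\le v$ and $u$ is idempotent, so $u=u^2\le uv=vu\le u$), I get $v\imp(c\imp d)=(vu)\imp(a\imp b)=u\imp(a\imp b)=c\imp d$, the equality of the first and third expressions. For the middle expression I would substitute (U3) into $vc\imp vd=v\imp(c\imp vd)=v\imp(v(c\imp d))=(v\cdot v)\imp(c\imp d)=v\imp(c\imp d)$, using idempotency of $v$, which identifies it with the third expression. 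The main obstacle is the reverse inequality in (U3): the key insight is that (Q) is precisely the device forcing $c\imp\neg\neg u=\neg\neg u$, which lets a meet with $\neg\neg u$ be introduced, after which (U1) trades that meet for a product; the currying step in (U4) is then routine once one notices that every occurrence of $\neg\neg u$ as a left factor there is a real product.
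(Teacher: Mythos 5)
Your proof is correct in substance and rests on the same pillars as the paper's proof --- the quasiequation (Q) of Lemma \ref{lemma:semantical}, the identity (U1) applied also to $\neg\neg u$ (legitimate since $\neg\neg u$ inherits idempotency by the Cignoli--Torrens observation), and the interplay of residuation, divisibility and distribution of $\imp$ over $\meet$ --- but you arrange them differently. You obtain (U2) from the Spinks--Veroff identity (Lemma \ref{idempo}(2)) and then deduce (U1) from (U2) by residuation; the paper goes the other way, proving (U1) directly ($u \meet a = u(u\imp a) = u^2(u\imp a) = u(u\meet a)\le ua$) and reading (U2) off it. For (U3) you prove two inequalities where the paper runs a single equational chain, with the same crux in both cases: (Q) forces $(u\imp a)\imp\neg\neg u=\neg\neg u$, and (U1) for the idempotent $\neg\neg u$ trades the resulting meet for a product. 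For the first-equals-third part of (U4), your currying computation ($\neg\neg u(u\imp a)\imp(u\imp b)=\neg\neg u\imp((u\imp a)\imp(u\imp b))$, together with $\neg\neg u\cdot u=u$ and Lemma \ref{idempo}(1)) is a clean, purely equational alternative to the paper's two-inequality sandwich. All of this checks out.

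One step is mis-justified, though its conclusion is true. For the middle expression of (U4) you pass from $\neg\neg u\imp\bigl(\neg\neg u((u\imp a)\imp(u\imp b))\bigr)$ to $(\neg\neg u\cdot\neg\neg u)\imp((u\imp a)\imp(u\imp b))$ and present this as currying. It is not: currying identifies $(vv)\imp x$ with $v\imp(v\imp x)$, a nested implication, not with $v\imp(vx)$, an implication into a product, and the identity $v\imp(vx)=(vv)\imp x$ is false in general --- in the standard MV-algebra take $v=1/2$ and $x=3/10$; then $vx=0$, so $v\imp(vx)=\neg v=1/2$, while $vv=0$ gives $(vv)\imp x=1$. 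In your situation the equality does hold, but only because $\neg\neg u$ is idempotent: the right-hand side equals $\neg\neg u\imp((u\imp a)\imp(u\imp b))$ since $\neg\neg u\cdot\neg\neg u=\neg\neg u$, and the left-hand side equals the same thing by (U2) applied to the idempotent element $\neg\neg u$. That appeal to (U2) is exactly the one-line repair: substitute it for the spurious ``currying'' move and your proof of (U4), and hence of the whole lemma, is complete.
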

\begin{proof}  Observe that $ua \le u \meet a$ by integrality; conversely
$$
u \meet a = u (u \imp a) = u^2(u \imp a) = u (u\meet a) \le ua.
$$
Thus (U1) holds, and (U2) is an easy consequence: $u (u \to a) = u \land a = ua$ implies $u \to a \leq u \to ua$, and the other inequality is a consequence of residuation.

Now by Lemma \ref{lemma:semantical} if $u$ is idempotent and $a \in A$ then
$$
(u \imp a) \imp \neg\neg u = \neg \neg u.
$$
Using (U1) and the fact that in residuated structures the implication distributes over meet we get the following:
\begin{align*}
(u \imp a) \imp \neg\neg u(u \imp b)&= (u \imp a) \imp[\neg\neg u \meet (u \imp b)]\\
&= [(u \imp a) \imp \neg\neg u] \meet [(u \imp a) \imp (u\imp b)]\\
&= \neg \neg u \meet [(u \imp a) \imp (u\imp b)] \\
&= \neg\neg u[(u \imp a) \imp (u\imp b)]
\end{align*}
and (U3) holds.
For (U4), the first equality is shown as follows:
\begin{align*}
\neg\neg u(u \imp a) \imp (u \imp b) &\le u(u \imp a) \imp (u \imp b) \\
&=u \imp [(u \imp a) \imp (u \imp b)] \\
& = u \imp (u \imp (a \imp b)) \\
&= u \imp (a \imp b) \\
&= (u \imp a) \imp (u \imp b)\\
&\le \neg\neg u( u \imp a) \imp (u \imp b).
\end{align*}
Finally,
\begin{align*}
\neg\neg u(u \imp a) \imp \neg\neg u(u \imp b) &= \neg\neg u(u \imp a) \imp [\neg\neg u \meet(u \imp b)]\\
& [\neg\neg u(u \imp a) \imp \neg\neg u] \meet [\neg\neg (u \imp a) \imp (u \imp b)] \\
&=\neg\neg u (u \imp a) \imp (u \imp b).
\end{align*}
Hence (U4) holds as well.
\end{proof}

\begin{lemma} \label{dzik}Let $\alg A$ be a bounded hoop, and let $\alg 2$ denote its subalgebra with domain $\{0,1\}$. If $\f\colon \alg A \longrightarrow \mathbf 2$ is a homomorphism and $u \in A$ is idempotent, then the map
$$
f(x) = (u \imp x) (\neg u \imp \f(x))
$$
is an endomorphism of $\alg A$.
\end{lemma}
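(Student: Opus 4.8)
The plan is to verify directly that $f$ preserves each of the fundamental operations $\cdot$, $\imp$, $0$ and $1$ of the bounded hoop $\alg A$; since $\meet$ is term-definable by $a \meet b = a(a \imp b)$ and a hoop carries no join, preservation of $\meet$ (and of any other definable operation) will follow automatically once $\cdot$ and $\imp$ are handled. The key preliminary observation is that the ``correction factor'' $c(x) := \neg u \imp \f(x)$ takes only two values: because $\f$ lands in $\{0,1\}$, we have $c(x) = 1$ when $\f(x) = 1$ and $c(x) = \neg\neg u$ when $\f(x) = 0$. I would also record at the outset that $\neg\neg u$ is idempotent (by the Cignoli--Torrens fact, noted just above the lemma, that $\neg\neg$ is a $\cdot$-homomorphism) and that the homomorphism $\f$ preserves the constants.

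First I would dispose of the constants. We get $f(1) = (u \imp 1)\,c(1) = 1\cdot 1 = 1$, and $f(0) = (u \imp 0)\,c(0) = \neg u \cdot \neg\neg u = \neg u\,(\neg u \imp 0) = \neg u \meet 0 = 0$ by divisibility; this is exactly the point of the factor $c(x)$, which repairs the failure of the bare map $x \mapsto u \imp x$ to preserve $0$.

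For the product I would invoke Lemma \ref{idempo}(2), namely $u \imp ab = (u \imp a)(u \imp b)$, together with $\f(ab) = \f(a)\f(b)$. Writing $P = u \imp a$ and $Q = u \imp b$ and commuting factors, preservation of $\cdot$ reduces to the identity $c(ab) = c(a)c(b)$, which I verify by a short case check on $(\f(a),\f(b)) \in \{0,1\}^2$; the only nontrivial input is $\f(a)=\f(b)=0$, where one uses the idempotence of $\neg\neg u$.

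The implication is where the real work lies, and it is precisely what Lemma \ref{lemma:dzik} was designed to handle. Using Lemma \ref{idempo}(1) to rewrite $u \imp (a \imp b) = P \imp Q$, together with $\f(a \imp b) = \f(a) \imp \f(b)$, I would split into the four cases for $(\f(a),\f(b))$. In each case $f(a) \imp f(b)$ becomes one of $P \imp Q$, $P \imp \neg\neg u\,Q$, $\neg\neg u\,P \imp Q$ or $\neg\neg u\,P \imp \neg\neg u\,Q$, while $f(a \imp b)$ is $(P \imp Q)$ multiplied by the appropriate value of $c(a \imp b) \in \{1,\neg\neg u\}$. Matching the two is then immediate: the case $(1,0)$ is exactly (U3), the cases $(0,1)$ and $(0,0)$ are exactly the two equalities in (U4), and $(1,1)$ is trivial. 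I expect the main obstacle to be purely bookkeeping---keeping the four cases and the two values of $c$ aligned---rather than any fresh computation, since all the substantive residuated manipulation has already been isolated in (U3)--(U4) and in the idempotence of $\neg\neg u$.
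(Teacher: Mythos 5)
Your proposal is correct and follows essentially the same route as the paper's proof: the same case analysis on the values of $\f$, with Lemma \ref{idempo} used to rewrite $u \imp ab$ and $u \imp (a \imp b)$, the idempotence of $\neg\neg u$ handling the case $\f(a)=\f(b)=0$, (U3) and (U4) of Lemma \ref{lemma:dzik} closing the implication cases exactly as you assign them, and divisibility giving $f(0) = \neg u(\neg u \imp 0) = \neg u \meet 0 = 0$. The only (cosmetic) difference is your packaging of the correction factor $c(x) = \neg u \imp \f(x)$, which reduces the product case to the identity $c(ab) = c(a)c(b)$ --- a slightly tidier bookkeeping of the computation the paper carries out case by case.
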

\begin{proof} We first prove that $f$ is a $\cdot$-homomorphism. Let $x,y \in A$; if $\f(xy) = 1$, then $\f(x) =\f(y) =1$ and
\begin{align*}
f(xy) &= (u \imp xy)(\neg u \imp 1) = (u \imp x)(u \imp y)\\
&= (u \imp x)(\neg u \imp 1)(u \imp y) (\neg u \imp 1) = f(x)f(y).
\end{align*}
Suppose that $\f(xy) = 0$, $\f(x)= 1$ and $\f(y)=0$; then
\begin{align*}
f(xy) &= (u \imp xy)\neg\neg u = (u \imp x)(u \imp y)\neg\neg u\\
&= (u \imp x)(\neg u \imp 1)(u \imp y) \neg\neg u = f(x)f(y).
\end{align*}
Finally suppose that $\f(xy) = \f(x) = \f(y)=0$; then
\begin{align*}
f(xy) &= (u \imp xy)\neg\neg u = (u \imp x)(u \imp y)\neg\neg u\\
&= (u \imp x)\neg\neg u(u \imp y) \neg\neg u = f(x)f(y).
\end{align*}

Thus, $f$ is a $\cdot$-homomorphism. 
%
%
%
We now show that $f$ preserves $\imp$ using Lemma \ref{lemma:dzik}; consider:
\begin{align*}
f(a \imp b) &= (u \imp (a \imp b))(\neg u \imp \f(a \imp b)))\\
&= [(u \imp a) \imp (u\imp b)][\neg u \imp (\f(a) \imp \f(b))].
\end{align*}
Suppose now that $\f(a)=1$ and $\f(b)=0$; then $\f(a) \imp \f(b) =0$ and the conclusion follows from (U3). In any other case $\f(a) \imp \f(b) =1$ and the conclusion is either obvious or  follows from (U4).
Finally $f(1) =1$ and
$$
f(0) = \neg u \cdot (\neg u \imp 0) = \neg u \meet 0 = 0.
$$
This completes the proof.
\end{proof}
We are now ready to prove the following characterization result. 
\begin{theorem}\label{mth0} A finite bounded hoop $\alg A$ is finitely projective in the variety of bounded hoops if and only if there exists a homomorphism $\f\colon \alg A \longrightarrow \mathbf 2$.
\end{theorem}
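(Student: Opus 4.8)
The statement is an equivalence, and the two directions have very different flavors. The forward implication is the easy one: it just makes the necessary condition of Proposition~\ref{lemma:2homimage} effective inside the class $\vv V_{fin}$ of finite algebras. Assume $\alg A$ is finitely projective and consider the finite bounded hoop $\alg B = \alg A \times \mathbf 2$ together with the first projection $\pi_1 \colon \alg B \to \alg A$, which is onto. Applying the defining lifting property of projectivity in $\vv V_{fin}$ to $h = \op{id}_\alg A$ (so that $\alg C = \alg A$) and $g = \pi_1$ yields a homomorphism $s \colon \alg A \to \alg B$ with $\pi_1 s = \op{id}_\alg A$; composing with the second projection gives the desired $\f = \pi_2 s \colon \alg A \to \mathbf 2$. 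This stays entirely inside $\vv V_{fin}$ and so avoids any appeal to free algebras, which are infinite and would take us outside the class.

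For the converse, fix a homomorphism $\f \colon \alg A \to \mathbf 2$. The plan is to show that $\alg A$ is a retract of every finite bounded hoop of which it is a homomorphic image, and then to upgrade this to projectivity in $\vv V_{fin}$ by a standard pullback argument (since we cannot invoke free objects here). Concretely, given finite $\alg B,\alg C$, a homomorphism $h \colon \alg A \to \alg C$ and a surjection $g \colon \alg B \to \alg C$, the pullback $\alg P = \{(a,b) \in \alg A \times \alg B : h(a) = g(b)\}$ is a finite bounded hoop whose first projection onto $\alg A$ is surjective; a retraction of $\alg A$ into $\alg P$ composed with the second projection lifts $h$ through $g$. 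So everything reduces to the retract claim.

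To prove the retract claim, let $g \colon \alg B \to \alg A$ be a surjection with $\alg B$ finite, put $\th = \ker g$ and $F = 1/\th$, and let $u$ be the least element of $F$, which is idempotent exactly as in the proof of Theorem~\ref{mth}. The naive assignment $x \mapsto u \imp x$ used there is an endomorphism with the correct kernel but in general fails to preserve $0$. The remedy is to twist it by the pulled-back character $\bar\f = \f g \colon \alg B \to \mathbf 2$ and set
$$
f(x) = (u \imp x)\,(\neg u \imp \bar\f(x)),
$$
which is an endomorphism of $\alg B$ by Lemma~\ref{dzik} and now satisfies $f(0) = \neg u \meet 0 = 0$. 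I would then verify, using the idempotency of $u$ together with Lemma~\ref{idempo} and the fact that $\bar\f(u) = \f(g(u)) = \f(1) = 1$, that $f$ is idempotent and that $f(x) = 1$ if and only if $x \in F$; since congruences in $\mathsf{FL_{ew}}$ are determined by their $1$-blocks, this forces $\ker f = \th$. Hence $f(\alg B) \cong \alg B/\th \cong \alg A$, and because $f$ is idempotent we have $(b, f(b)) \in \ker f = \ker g$, so the induced embedding $h \colon \alg A \to \alg B$ satisfies $gh = \op{id}_\alg A$; that is, $\alg A$ is a retract of $\alg B$, as required.

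The routine parts are the forward direction and the pullback reduction; the computational heart, and the step I expect to be most delicate, is checking that the twisted map $f$ is idempotent with $1$-block exactly $F$. The endomorphism property is already granted by Lemma~\ref{dzik}, but idempotency must be teased out separately on the two pieces where $\bar\f(x) = 1$ and where $\bar\f(x) = 0$: on the first piece $f$ collapses to $u \imp x$ and idempotency is immediate from $u^2 = u$, whereas on the second piece one must use the multiplicative identity of Lemma~\ref{idempo}, the idempotency of $\neg\neg u$, and $u \le \neg\neg u$ to show $u \imp f(x) = u \imp x$ and hence $f(f(x)) = (u \imp x)\neg\neg u = f(x)$. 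Controlling the interaction of the two factors is precisely where the identities (U1)--(U4) assembled in Lemma~\ref{lemma:dzik}, and ultimately the quasiequation (Q) of Lemma~\ref{lemma:semantical}, are needed.
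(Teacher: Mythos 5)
Your proposal is correct, and its computational heart is exactly the paper's proof of Theorem~\ref{mth0}: the same twisted endomorphism $f(x) = (u \imp x)(\neg u \imp \f'(x))$, with $u$ the idempotent minimum of the filter $1/\ker g$ and $\f'$ the pulled-back character into $\{0,1\}$, justified by Lemma~\ref{dzik}, proved idempotent by the same case split on the value of $\f'(x)$ (the case $\f'(x)=0$ resting on Lemma~\ref{lemma:dzik} and ultimately on Lemma~\ref{lemma:semantical}), and shown to have $1$-block exactly $1/\ker g$, so that $\alg A \cong f(\alg B)$ is a retract of $\alg B$. You differ from the paper only in the two framing steps, and in both cases your treatment is more self-contained. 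For necessity, the paper cites Proposition~\ref{lemma:2homimage}, whose proof goes through retracts of free algebras and so, read strictly, concerns projectivity in the whole variety; your product argument ($\alg A \times \mathbf 2$, lift $\op{id}_{\alg A}$ through $\pi_1$, compose with $\pi_2$) establishes the finite-projectivity statement without ever leaving $\vv V_{fin}$. For sufficiency, the paper concludes by citing Proposition~\ref{prop:retractproj1}; but that proposition, as stated and proved, applies the retract hypothesis to the \emph{codomain} of the lifting problem, so it only yields projectivity within the class of finite bounded hoops admitting a homomorphism onto $\mathbf 2$, and does not by itself cover lifting problems whose codomain (e.g.\ $\alg{\L}_3$) admits no such homomorphism. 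Your pullback $\alg P = \{(a,b) : h(a) = g(b)\}$, which surjects onto $\alg A$ via $\pi_1$ and inherits a homomorphism onto $\mathbf 2$ through $\pi_1$, is precisely the step that upgrades the retract claim to projectivity in all of $\vv V_{fin}$, so on this point your write-up fills a small gap that the paper leaves implicit. One cosmetic remark: bounded hoops are not $\mathsf{FL_{ew}}$-algebras (they lack the join), but the fact you invoke, that congruences are determined by their $1$-blocks, does hold for them and is used in the same way in Theorem~\ref{mth}.
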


\begin{proof} We have already observed that the condition is necessary, we now show that it is sufficient. Let $\alg A$ be a finite bounded hoop, let $\f\colon \alg A \longrightarrow \mathbf 2$ be a homomorphism and let $\alg B$ be a finite bounded hoop such that
$g\colon\alg B \longrightarrow \alg A$ is a surjective homomorphism. Then $\f g(x)$ is a homomorphism from $\alg B $ to $\mathbf 2$. Let $\f'$ the corresponding homomorphism from $\alg B$ to $\mathbf 2 \sse \alg B$. As in the proof of Theorem \ref{mth}, if $\th = \op{ker}(g)$, then  $F = 1/\th$ is a filter of $\alg B$; since $\alg B$ is finite $F$ must have a minimum $u$ which is necessarily idempotent. Observe also that $\f'(u)=1$; in fact $u \in F$ which means that $g(u) =g(1)$ and thus
 $$
 \f'(u) = \f g(u) = \f g(1) = \f(1) = 1.
 $$
 Let $f$ be the endomorphism of $\alg B$ (relative to $u$ and $\f'$) resulting from Lemma \ref{dzik}; then $f(b) = 1$ implies $u \imp b =1$ and thus $b \in F$. Conversely if $b \in F$ then
$$
f(b) = (u \imp b)(\neg u \imp \f'(b))\mathrel{\th} (u \imp 1)(\neg u \imp \f'(1)) = 1.
$$ 
Moreover, we can show that $f$ is idempotent. Indeed, if $\f'(x) = 0$, 
\begin{align*}
f(f(x)) &= (u \to \neg\neg u(u \to x)) \neg\neg u = [(u \to \neg\neg u) \land (u \to (u \to x))]\neg\neg u \\
&= (u \to x) \neg\neg u = f(x),
\end{align*}
while if $\f'(x) = 1$, 
\begin{align*}
f(f(x)) &= (u \to (u \to x)) = u \to x = f(x).
\end{align*}
Now we can argue as in Theorem \ref{mth} to conclude that $\alg A$ is a retract of $\alg B$ and the conclusion follows again from  Proposition \ref{prop:retractproj1}.
\end{proof}

\begin{corollary}\label{cor:boundedhoops}Let $\vv V$ be a locally finite variety of bounded hoops; then a finite $\alg A \in \vv V$ is projective in $\vv V$ if and only if there is a homomorphism $\f\colon \alg A \longrightarrow \mathbf 2$.
\end{corollary}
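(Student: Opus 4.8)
The plan is to split the statement into its two implications, invoking Proposition \ref{lemma:2homimage} for the necessity and Theorem \ref{mth0} together with local finiteness for the sufficiency. For the necessity, assume $\alg A$ is projective in $\vv V$. Since $\vv V$ is a subvariety of $\mathsf{FL_{ew}}$, Proposition \ref{lemma:2homimage} applies directly and yields a homomorphism of $\alg A$ onto $\mathbf 2$; in particular a homomorphism $\f\colon \alg A \to \mathbf 2$ exists. No further work is needed in this direction.

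For the sufficiency, suppose $\f\colon \alg A \to \mathbf 2$ exists. By Theorem \ref{mth0}, $\alg A$ is finitely projective in the variety of all bounded hoops; recalling the terminology introduced after Proposition \ref{prop:retractproj1}, this means that $\alg A$ is a retract of every finite bounded hoop of which it is a homomorphic image. The key observation is that local finiteness lets us take this ``target'' to be a free algebra of $\vv V$ itself. As $\alg A$ is finite it is finitely generated, so there is a finite set $X$ and a surjection $g\colon \alg F_\vv V(X) \to \alg A$; and because $\vv V$ is locally finite with $X$ finite, $\alg F_\vv V(X)$ is a \emph{finite} algebra, hence a finite bounded hoop since $\vv V$ is a variety of bounded hoops. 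Thus $\alg A$ is a homomorphic image of the finite bounded hoop $\alg F_\vv V(X)$, and finite projectivity produces a homomorphism $h\colon \alg A \to \alg F_\vv V(X)$ with $g h = \op{id}_\alg A$, exhibiting $\alg A$ as a retract of $\alg F_\vv V(X)$. Since $\alg F_\vv V(X)$ is free in $\vv V$, Theorem \ref{whitman} upgrades this to projectivity of $\alg A$ in $\vv V$, completing the argument.

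The one point that must be verified, and which I regard as the main (though minor) obstacle, is that the retraction supplied by Theorem \ref{mth0} genuinely lives inside $\vv V$ rather than merely in the larger variety of bounded hoops. This is unproblematic: the retract constructed there is the image of an idempotent endomorphism of $\alg F_\vv V(X)$, hence a subalgebra of $\alg F_\vv V(X)$ and so again a member of $\vv V$, while the two witnessing maps $h$ and $g$ are homomorphisms between $\vv V$-algebras. Consequently $\alg A$ is a retract of the free $\vv V$-algebra $\alg F_\vv V(X)$ \emph{in} $\vv V$, which is exactly what Theorem \ref{whitman} requires. Local finiteness is essential throughout, since without it $\alg F_\vv V(X)$ need not be finite and Theorem \ref{mth0} could not be invoked; I expect no further difficulties.
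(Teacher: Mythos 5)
Your proof is correct and takes essentially the same route the paper intends: necessity via Proposition \ref{lemma:2homimage} (which the paper, like you, reads as applying to bounded hoops even though it is stated for $\mathsf{FL_{ew}}$), and sufficiency by combining Theorem \ref{mth0} with local finiteness so that $\alg A$ becomes a retract of the finite free algebra $\alg F_\vv V(X)$, at which point Theorem \ref{whitman} (equivalently Theorem \ref{finitelypresented}) yields projectivity in $\vv V$. Your closing verification that the retraction lives in $\vv V$ is harmless but not needed, since the witnessing maps are homomorphisms between algebras of $\vv V$ regardless of the ambient variety in which they were constructed.
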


The case of BL-algebras is managed in the usual way; since the join is definable by prelinearity all the arguments go through without change.

\begin{corollary}\label{corollary:projectiveBL}Let $\vv V$ be a locally finite variety of BL-algebras; then a finite $\alg A \in \vv V$ is projective in $\vv V$ if and only if there is a homomorphism $\f\colon \alg A \longrightarrow \mathbf 2$.
\end{corollary}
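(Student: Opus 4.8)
The plan is to reduce the statement to the characterisation already obtained for bounded hoops, Corollary~\ref{cor:boundedhoops}, by exploiting the term equivalence between BL-algebras and prelinear bounded hoops. The key observation is the one flagged in the paragraph preceding the statement: in any BL-algebra the join is not an independent operation but is recovered from $\meet$ and $\imp$ through the identity (PJ). Consequently the variety $\BL$ and the variety of prelinear bounded hoops are definitionally equivalent, the translation being ``forget $\join$'' in one direction and ``define $\join$ by (PJ)'' in the other, and this equivalence restricts to the variety $\vv V$ in the statement.

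First I would make this precise at the level of $\vv V$. Let $\vv V^{-}$ be the class of $\join$-free reducts of the algebras in $\vv V$; since $\join$ is a derived term, $\vv V^{-}$ is a variety of (prelinear) bounded hoops and the reduct assignment is an isomorphism of categories between $\vv V$ and $\vv V^{-}$. In particular the two have literally the same homomorphisms — a bounded-hoop homomorphism between members of $\vv V$ automatically preserves the derived operation $\join$ — the same subalgebras, and corresponding free algebras of the same cardinality. Hence $\vv V^{-}$ is locally finite whenever $\vv V$ is, and a finite $\alg A \in \vv V$ is projective in $\vv V$ if and only if its reduct is projective in $\vv V^{-}$.

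With this dictionary in place the corollary follows at once. A finite $\alg A \in \vv V$ is projective in $\vv V$ iff its reduct is projective in the locally finite variety of bounded hoops $\vv V^{-}$, which by Corollary~\ref{cor:boundedhoops} happens iff there is a bounded-hoop homomorphism from the reduct of $\alg A$ to $\mathbf 2$; and such a homomorphism is exactly a BL-homomorphism $\f \colon \alg A \longrightarrow \mathbf 2$, since source and target carry the same derived join. For the necessity direction one may alternatively invoke Proposition~\ref{lemma:2homimage} directly: any projective $\mathsf{FL_{ew}}$-algebra, in particular any projective BL-algebra, has $\mathbf 2$ as a homomorphic image, and since $0,1$ are constants the resulting surjection is the required $\f$.

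The only genuine work, and the step I expect to need the most care, is justifying that projectivity transfers across the reduct functor. Concretely I must check that a retraction witnessing projectivity of the bounded-hoop reduct consists of maps that also preserve $\join$, so that it is a retraction in $\vv V$, and conversely — but this is immediate once $\join$ is recognised as a term operation, so no genuinely new computation is required, and in particular the endomorphism of Lemma~\ref{dzik} need not be re-derived. Everything else is bookkeeping about definitional equivalence.
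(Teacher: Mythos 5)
Your proposal is correct and takes essentially the same route as the paper: the paper's entire proof is the remark that, by prelinearity, the join is term-definable via (PJ), so the bounded-hoop characterization (Theorem \ref{mth0}, Corollary \ref{cor:boundedhoops}) carries over unchanged to BL-algebras. Your explicit term-equivalence bookkeeping (reduct functor, transfer of local finiteness and projectivity) is simply a careful spelling-out of that one-line argument.
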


Since every finite algebra $\alg A$ in a variety is a subdirect product of finite subdirectly irreducible algebras, and all the subdirect factors are homomorphic images of $\alg A$, we can sharpen our results some more.

\begin{theorem}\label{theorem:projsubirr} Let $\vv V$ be a locally finite variety of bounded hoops or $\mathsf{BL}$-algebras such that every finite subdirectly irreducible in $\vv V$ has $\mathbf 2$ as homomorphic image. Then every finitely presented
algebra in $\vv V$ is projective.
\end{theorem}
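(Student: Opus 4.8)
The plan is to reduce the statement to the projectivity criteria already obtained, namely Corollary~\ref{cor:boundedhoops} for bounded hoops and Corollary~\ref{corollary:projectiveBL} for $\mathsf{BL}$-algebras: in a locally finite variety of either kind, a finite algebra is projective if and only if it admits a homomorphism onto $\mathbf 2$. Since $\vv V$ is locally finite, every finitely presented algebra is finite, so it is enough to prove that an arbitrary nontrivial finite $\alg A \in \vv V$ maps homomorphically onto $\mathbf 2$; the criterion then immediately yields projectivity.

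To produce such a homomorphism I would use the subdirect decomposition of $\alg A$. Being finite, $\alg A$ embeds subdirectly into a finite product $\prod_{i \in I}\alg A_i$ of finite subdirectly irreducible algebras of $\vv V$, where $I$ is nonempty because $\alg A$ is nontrivial. Each projection $\pi_i \colon \alg A \longrightarrow \alg A_i$ is surjective, so every $\alg A_i$ is a finite subdirectly irreducible homomorphic image of $\alg A$ lying in $\vv V$. By the hypothesis of the theorem each such $\alg A_i$ has $\mathbf 2$ as a homomorphic image; fixing any $i \in I$ and a surjection $\psi_i \colon \alg A_i \longrightarrow \mathbf 2$, the composite $\f = \psi_i \circ \pi_i \colon \alg A \longrightarrow \mathbf 2$ is the desired homomorphism. (Recall that any homomorphism into $\mathbf 2$ is automatically onto, since it preserves $0$ and $1$.)

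There is essentially no computational obstacle here, as the two corollaries carry all the analytic weight; the only genuinely delicate point is the bookkeeping around the trivial algebra. The empty subdirect decomposition corresponds to the one-element algebra, which admits no homomorphism onto $\mathbf 2$ and is therefore not projective in these varieties; I would accordingly state the argument for nontrivial $\alg A$ and dispose of the trivial case by the usual convention. Finally, I would observe that the hypothesis is exactly the right one: by Proposition~\ref{lemma:2homimage} a projective algebra always has $\mathbf 2$ as a homomorphic image, so if every finitely presented algebra of $\vv V$ is projective then in particular every finite subdirectly irreducible (being finite, hence finitely presented) is projective and thus has $\mathbf 2$ as a homomorphic image. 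The stated condition on subdirectly irreducibles is therefore not only sufficient but also necessary for the whole variety to enjoy this projectivity property.
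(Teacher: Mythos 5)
Your proof is correct and takes essentially the same route as the paper: the paper's argument is precisely that every finite algebra in $\vv V$ is a subdirect product of finite subdirectly irreducible algebras, each of which is a homomorphic image of it, so the hypothesis yields a homomorphism onto $\mathbf 2$ and Corollary~\ref{cor:boundedhoops} (resp.\ Corollary~\ref{corollary:projectiveBL}) finishes the job. Your explicit handling of the trivial algebra (which is indeed finitely presented but not projective, so must be set aside by convention) and your closing remark that the hypothesis is also necessary, via Proposition~\ref{lemma:2homimage}, are sound refinements that the paper leaves implicit.
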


The theorem above is (yet another) reason why every finitely presented (i.e., finite) Boolean algebra is projective: the variety of Boolean algebras is locally finite and the only subdirectly irreducible is $\mathbf 2$. A more intriguing example is the following: a variety of $\mathsf{FL_{ew}}$-algebras is {\em Stonean} if it satisfies the equation
$$
\neg x \join \neg\neg x \app 1.
$$
It is the straightforward consequence of  the characterization of the subdirectly irreducible $\mathsf{BL}$-algebras in \cite{AglianoMontagna2003} that a finite subdirectly irreducible algebra in a Stonean variety is
of the form  $\alg A \cong \mathbf 2 \oplus \alg B$, where $\alg B$ is a totally ordered hoop. Since $\alg B$ is a filter of $\alg A$, we can collapse it and get $\mathbf 2$ as a homomorphic image of $\alg A$. Hence, locally finite Stonean varieties of $\mathsf {BL}$-algebras fall under the scope of Theorem \ref{theorem:projsubirr}. Stonean $\mathsf{BL}$-algebras are a particular instance of varieties of residuated lattices with a so-called {\em Boolean retraction term} \cite{CT12}; these varieties will be studied in depth in a forthcoming paper \cite{AU}.

\section{Algebraic unification theory: an application}\label{section:unif}

The origin of unification theory is usually attributed to Julia Robinson \cite{Robinson1965}.
The classical syntactic unification problem is as follows:  given two term $s,t$ (built from function symbols and variables), find a {\em unifier} for them, that is, a uniform replacement of the variables occurring in $s$ and $t$ by other terms that makes $s$ and $t$ identical.  When the latter syntactical identity is replaced by equality modulo a given equational theory $E$, one speaks of $E$-unification.  Unsurprisingly, $E$-unification can be considerably harder than syntactic unification, even when the theory $E$ is fairly well understood.  To ease this problem S. Ghilardi in \cite{Ghilardi1997} proposed a new (equivalent) approach and it is the approach we are going to follow here.  A unification problem
for a variety $\vv V$ is a finitely presented algebra $\alg A$ in $\vv V$; a {\em solution} is a homomorphism $u\colon\alg A \longrightarrow \alg P$, where $\alg P$ is a projective algebra in $\vv V$. In this case $u$ is called a {\em unifier} for $\alg A$ and we say that $\alg A$ is {\em unifiable}. If $u_1, u_2$ are two different unifiers for an algebra $\alg A$ (with projective targets $\alg P_1$ and $\alg P_2$) we say that $u_1$ is {\em more general} than $u_2$ if
there exists a homomorphism $m\colon \alg P_1 \longmapsto \alg P_2$ such that $mu_1 = u_2$. The relation  ``being less general of'' is a preordering on the unifiers of $\alg A$, thus we can consider the associated equivalence relation; then the equivalence classes (i.e., the unifiers that are ``equally general'') form a partially ordered set $U_\alg A$.  It is customary to assign a type (unitary, finitary, infinitary, or nullary) to the algebra according to how many maximal elements $U_\alg A$ has; we are particularly interested in the case of {\em unitary type} which happens if $U_\alg A$ has a maximum, i.e. there is only one maximal element which plays the role of a ``best solution'' to the unification problem, since all other ones can be obtained starting from it, by further substitutions. The type of a variety $\vv V$ is unitary if every unifiable algebra $\alg A \in \vv V$ has unitary type. In this case where the type of $\alg A$ is unitary then the maximum in $U_\alg A$ is called the {\em most general unifier} ({\em mgu} for short) of $\alg A$. We separate the case in which the {\em mgu} is of a special kind: we say that $\alg A$ has {\em strong unitary type} if its {\em mgu} is the identity. A variety $\vv V$ has  strong unitary type if every unifiable algebra in $\vv V$ has strong unitary type.
 The connection with the present paper is given by the following observation.

\begin{proposition}\label{prop:stunitary}
\label{unitary}  Let $\vv V$ be any variety; then the following are equivalent.
 \begin{enumerate}
 \item $\vv V$ has strong unitary unification type;
 \item for any finitely presented algebra $\alg A \in \vv V$, $\alg A$ is unifiable if and only if it is projective.
 \end{enumerate}
\end{proposition}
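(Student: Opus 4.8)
The plan is to unwind the definitions of \emph{unifier}, \emph{more general}, and \emph{strong unitary type}, and to observe that everything reduces to a single elementary fact: for a projective finitely presented algebra, the identity map is its own most general unifier. Throughout I would rely on the general (hypothesis-free) observation that every projective finitely presented algebra is unifiable, since if $\alg A$ is projective then $\op{id}_\alg A\colon \alg A \longrightarrow \alg A$ is a homomorphism onto a projective target, hence a unifier. This half of the ``unifiable iff projective'' biconditional holds unconditionally and is used in both directions.

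For $(1)\Rightarrow(2)$ I would fix a finitely presented $\alg A \in \vv V$. The implication ``projective $\Rightarrow$ unifiable'' is exactly the general observation just made. For the converse, suppose $\alg A$ is unifiable; by $(1)$ it then has strong unitary type, so its \emph{mgu} is (the class of) $\op{id}_\alg A$. But a unifier is by definition a homomorphism onto a \emph{projective} target, and the target of $\op{id}_\alg A$ is $\alg A$ itself; hence $\alg A$ is projective. This yields the equivalence asserted in $(2)$.

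For $(2)\Rightarrow(1)$ I would take an arbitrary unifiable $\alg A \in \vv V$ and show it has strong unitary type. By $(2)$, unifiability gives projectivity, so $\op{id}_\alg A$ is a legitimate unifier. The key step is that $\op{id}_\alg A$ dominates every other unifier: given any unifier $u\colon \alg A \longrightarrow \alg P$ with $\alg P$ projective, the homomorphism $m = u\colon \alg A \longrightarrow \alg P$ witnesses that $\op{id}_\alg A$ is more general than $u$, since $m\circ\op{id}_\alg A = u$. Thus the class of $\op{id}_\alg A$ is the maximum of $U_\alg A$, so $\alg A$ has unitary type with \emph{mgu} the identity, i.e. strong unitary type. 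As $\alg A$ was an arbitrary unifiable algebra, $\vv V$ has strong unitary type.

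I expect no serious obstacle here: both implications are formal consequences of the definitions, and in particular no appeal to Theorem~\ref{whitman} or to the machinery of retracts is needed. The only points requiring care are purely bookkeeping: first, recognizing that the phrase ``the \emph{mgu} is the identity'' already presupposes that $\op{id}_\alg A$ is a unifier, which is precisely what forces $\alg A$ to be projective; and second, noting that in $(2)\Rightarrow(1)$ the factorization $m=u$ does all the work, so that projectivity of $\alg A$ is used only to guarantee that $\op{id}_\alg A$ qualifies as a unifier at all.
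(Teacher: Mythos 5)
Your proof is correct, and it is exactly the argument the paper intends: the paper states this proposition as a bare ``observation'' with no written proof, precisely because it is the formal unwinding of the definitions that you carry out. Both of your key points --- that $\op{id}_{\alg A}$ qualifies as a unifier exactly when $\alg A$ is projective, and that in that case $m=u$ witnesses $\op{id}_{\alg A}$ being more general than any unifier $u$ --- are the whole content of the statement, so nothing is missing.
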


In \cite{Ghilardi1997} S. Ghilardi used implicitly this fact to prove that any variety of Brouwerian semilattices, and any variety generated by a quasi-primal algebra, have strong unitary type. Any finitely presented (i.e. finite)  Brouwerian semilattice is unifiable since it a has homomorphism into $\{1\}$ that is the free algebra on the empty set; on the other hand, any finite Brouwerian semilattice is projective by Theorem \ref{mth}. The same holds (see the first section of this paper)   for varieties generated by a quasi-primal algebras with no minimal nontrivial subalgebras (e.g., the variety of Boolean algebras). In case the variety is generated by a quasi-primal algebra with nontrivial minimal subalgebras, then it is not true that every finite algebra is projective, but still every unifiable algebra is projective \cite{Ghilardi1997}.

In this paper we have showed that several interesting varieties are such that all finitely presented algebras are projective, therefore showing that point (2) of Proposition \ref{unitary} holds. Moreover, in the  case of bounded commutative residuated lattices (i.e. $\mathsf{FL_{ew}}$-algebras) we can rephrase Proposition \ref{prop:stunitary} in a more transparent way.
In varieties of $\mathsf{FL_{ew}}$-algebras any unifiable algebra must have a surjective homomorphism on the two element algebra $\mathbf 2$ (Proposition \ref{lemma:2homimage}), and since $\alg 2$ is projective in every variety of $\mathsf{FL_{ew}}$-algebras (Lemma \ref{lemma:2retract}) we get:

\begin{proposition}\label{prop:zerostunitary}
	\label{ground} For a variety $\vv V$ of $\mathsf{FL_{ew}}$ the following are equivalent:
\begin{enumerate}
\item $\vv V$ has strong unitary type;
\item for any finitely presented $\alg A \in \vv V$, $\alg A$ has $\mathbf 2$ as a homomorphic image if and only if $\alg A$ is projective.
\end{enumerate}
\end{proposition}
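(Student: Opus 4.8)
The plan is to deduce this from Proposition~\ref{prop:stunitary}, which already characterizes strong unitary type by the coincidence of unifiability and projectivity on finitely presented algebras. All that remains is to show that, inside any variety $\vv V$ of $\mathsf{FL}_{ew}$-algebras, a finitely presented algebra is unifiable precisely when it has $\mathbf 2$ as a homomorphic image; substituting this equivalence into clause (2) of Proposition~\ref{prop:stunitary} yields clause (2) of the present statement.

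First I would prove that unifiability implies having $\mathbf 2$ as a homomorphic image. Suppose $\alg A$ is unifiable, so there is a unifier $u\colon \alg A \longrightarrow \alg P$ with $\alg P$ projective in $\vv V$. By Proposition~\ref{lemma:2homimage} the projective algebra $\alg P$ has $\mathbf 2$ as a homomorphic image, say via a surjection $q\colon \alg P \longrightarrow \mathbf 2$. The composite $qu\colon \alg A \longrightarrow \mathbf 2$ is a homomorphism, and---this is the one point requiring a little care---it is automatically surjective: the signature of $\mathsf{FL}_{ew}$ contains the constants $0$ and $1$, which are preserved by every homomorphism, so the image of $qu$ is a subalgebra of $\mathbf 2$ containing both $0$ and $1$, and the only such subalgebra is $\mathbf 2$ itself. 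Hence $\mathbf 2$ is a homomorphic image of $\alg A$.

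For the converse, if $\alg A$ has $\mathbf 2$ as a homomorphic image then the corresponding surjection $\alg A \longrightarrow \mathbf 2$ is itself a unifier, because $\mathbf 2$ is projective in every subvariety of $\mathsf{FL}_{ew}$ by Lemma~\ref{lemma:2proj}; thus $\alg A$ is unifiable. Combining the two directions gives that, for finitely presented algebras in $\vv V$, ``unifiable'' and ``has $\mathbf 2$ as a homomorphic image'' are interchangeable, and replacing the former by the latter in Proposition~\ref{prop:stunitary} establishes the stated equivalence.

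I do not expect any genuine obstacle here: the result is essentially a specialization of Proposition~\ref{prop:stunitary} once the above equivalence is in place. The only delicate step is verifying surjectivity of the composite $qu$ in the forward direction, and this rests entirely on the fact that $0$ and $1$ are constants in the signature, hence fixed by every homomorphism, forcing any nontrivial image in $\mathbf 2$ to be all of $\mathbf 2$.
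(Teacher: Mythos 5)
Your proof is correct and takes essentially the same route as the paper: the paper likewise derives the statement from Proposition \ref{prop:stunitary} by observing that, in varieties of $\mathsf{FL}_{ew}$-algebras, unifiability of a finitely presented algebra is equivalent to having $\mathbf 2$ as a homomorphic image, citing Proposition \ref{lemma:2homimage} for one direction and the projectivity of $\mathbf 2$ (Lemma \ref{lemma:2retract}) for the other. Your explicit check that the composite homomorphism onto $\mathbf 2$ is automatically surjective, because the constants $0$ and $1$ are preserved, is a detail the paper leaves implicit but is exactly the right justification.
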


The property (2) in Lemma \ref{ground} has also been called {\em groundedness}. D. Valota et. al. (private communication) classified several subvarieties of $\mathsf{MTL}$ enjoying groundedness; we remark that in some cases their results overlap with ours.
We get the following conclusions. 
\begin{theorem} The following varieties, and their corresponding logics, have strong unitary unification type:
\begin{enumerate}
\item all locally finite subvarieties of hoops and of HBCK-algebras;
\item all locally finite subvarieties of bounded hoops and BL-algebras;
\item cancellative hoops.
\end{enumerate}
\end{theorem}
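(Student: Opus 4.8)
The plan is to treat the three families by invoking the two reduction results established earlier: Proposition \ref{unitary} for the unbounded families (1) and (3), and Proposition \ref{ground} for the bounded family (2). The uniform observation driving the easy cases is that a projective algebra is trivially unifiable via its own identity homomorphism, so whenever a variety has the property that \emph{every} finitely presented algebra is projective, both directions of the biconditional in Proposition \ref{unitary}(2) hold at once, and strong unitary type follows immediately.

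For item (1), I would argue that in a locally finite variety the finitely presented algebras are precisely the finite ones; by Theorem \ref{mth} (for hoops) and Theorem \ref{thm:hbck} (for HBCK-algebras) each such algebra is projective, hence also unifiable through its identity map, so Proposition \ref{unitary}(2) applies and the variety has strong unitary type. Item (3) is entirely analogous: Proposition \ref{prop:cancproj} tells us that the finitely presented cancellative hoops are exactly the finitely generated projective ones, so again every finitely presented algebra is projective and therefore unifiable, and Proposition \ref{unitary}(2) holds.

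The genuine work is in item (2), where the ambient variety lies in $\mathsf{FL}_{ew}$ and unifiability is no longer automatic: an algebra with a negation fixpoint, such as $\alg{\L}_3$, is not projective and, having no homomorphism onto $\mathbf 2$, is not unifiable either. Here I would appeal to Proposition \ref{ground}, which reduces strong unitary type to the statement that a finite $\alg A$ has $\mathbf 2$ as a homomorphic image exactly when it is projective. The bridge is supplied by Corollary \ref{cor:boundedhoops} for bounded hoops and Corollary \ref{corollary:projectiveBL} for BL-algebras, which characterize the finite projective algebras as precisely those admitting a homomorphism $\f:\alg A \longrightarrow \mathbf 2$. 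To line this up with the wording of Proposition \ref{ground}(2), I would record the small but essential remark that any such $\f$ is automatically surjective: every subalgebra of $\mathbf 2$ must contain both $0$ and $1$ and hence equals $\mathbf 2$, so ``$\alg A$ admits a homomorphism into $\mathbf 2$'' and ``$\mathbf 2$ is a homomorphic image of $\alg A$'' are the same condition. With this identification, condition (2) of Proposition \ref{ground} is exactly the content of Corollaries \ref{cor:boundedhoops} and \ref{corollary:projectiveBL}.

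I expect item (2) to be the only real obstacle. In the unbounded cases the claim collapses to the already-established fact that finitely presented algebras are projective, so there is nothing delicate to prove; the bounded case instead requires genuinely combining the projectivity characterization (existence of a homomorphism into $\mathbf 2$) with the surjectivity observation in order to meet the groundedness criterion of Proposition \ref{ground}, rather than reducing to a vacuous biconditional.
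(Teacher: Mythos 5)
Your proposal is correct and takes essentially the same route as the paper: its proof consists precisely of citing Propositions \ref{prop:stunitary} and \ref{prop:zerostunitary} together with Theorems \ref{mth} and \ref{thm:hbck} for item (1), Corollaries \ref{cor:boundedhoops} and \ref{corollary:projectiveBL} for item (2), and Proposition \ref{prop:cancproj} for item (3). Your added remarks (the identity map as a unifier in the unbounded cases, the automatic surjectivity of any homomorphism into $\mathbf 2$, and the identification of finitely presented with finite algebras in locally finite varieties) simply make explicit the routine details the paper leaves implicit.
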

\begin{proof}
	The proof follows from Propositions \ref{prop:stunitary} and \ref{prop:zerostunitary} together with, respectively: Theorems \ref{mth} and \ref{thm:hbck}; Corollaries \ref{cor:boundedhoops} and \ref{corollary:projectiveBL}; Proposition \ref{prop:cancproj}.
\end{proof}
In particular then, all locally finite varieties of MV-algebras have strong unitary unification type, while the variety of all MV-algebras does not, in fact its unification type is nullary, the worst-case scenario (\cite{MarraSpada2013}).  Projective algebras in locally finite varieties of MV-algebras have been characterized in \cite{DiNolaGrigoliaLettieri2008}.
\begin{theorem}\label{dinola}\cite{DiNolaGrigoliaLettieri2008} Let $\vv V$ be a locally finite variety of MV-algebras; then a finite algebra $\alg A \in \vv V$ is projective if and only if it is isomorphic with $\mathbf 2 \times \alg A'$ for some finite $\alg A' \in \vv V$.
\end{theorem}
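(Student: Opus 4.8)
The plan is to derive this statement from results already proved in the paper, reducing both directions to one structural fact about finite MV-algebras: such an algebra has $\mathbf 2$ as a homomorphic image if and only if $\mathbf 2$ splits off as a direct factor. First I would record that a variety of MV-algebras, written in the signature of $\BL$ (MV-algebras being exactly the BL-algebras satisfying $\neg\neg x \app x$, with $\neg x = x \imp 0$), is a subvariety of $\BL$, and that local finiteness is unaffected by this term-equivalence. Hence a locally finite variety $\vv V$ of MV-algebras is in particular a locally finite variety of BL-algebras, and Corollary \ref{corollary:projectiveBL} applies verbatim: a finite $\alg A \in \vv V$ is projective in $\vv V$ if and only if there is a homomorphism $\f : \alg A \longrightarrow \mathbf 2$.

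Granting this, the two implications become short. For necessity, if $\alg A$ is projective then by Proposition \ref{lemma:2homimage} it has $\mathbf 2$ as a homomorphic image. For sufficiency, if $\alg A \cong \mathbf 2 \times \alg A'$ then the first projection is a homomorphism onto $\mathbf 2$, so $\alg A$ is projective by Corollary \ref{corollary:projectiveBL}. Thus everything reduces to showing that, for a \emph{finite} MV-algebra, the mere existence of a homomorphism to $\mathbf 2$ already forces $\mathbf 2$ to be a direct factor.

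The key step invokes the classical structure theory of finite MV-algebras: every finite MV-algebra is isomorphic to a finite direct product $\prod_i \alg{\L}_{n_i}$ of finite MV-chains, and each finite MV-chain is simple (the ideal generated by any nonzero element is the whole chain). Since MV-algebras are congruence distributive, congruences of a finite direct product factor through the components, so every quotient of $\prod_i \alg{\L}_{n_i}$ is isomorphic to a subproduct $\prod_{i \in S} \alg{\L}_{n_i}$. A homomorphic image isomorphic to $\mathbf 2$ therefore corresponds to such a subproduct; as $\mathbf 2$ is directly indecomposable, this singles out a single index $i$ with $\alg{\L}_{n_i} \cong \mathbf 2$, which is then a direct factor of $\alg A$. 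This yields $\alg A \cong \mathbf 2 \times \alg A'$ and closes the loop.

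The hard part, and the only place where genuinely MV-specific input is needed, is this last structural step: I must rule out the possibility that the $\mathbf 2$ in the image arises from collapsing a larger chain rather than from an existing $\mathbf 2$ factor. This is exactly where the simplicity of finite MV-chains is decisive, since a chain $\alg{\L}_n$ with $n \geq 2$ is simple and thus has no quotient isomorphic to $\mathbf 2$; consequently the $\mathbf 2$ can only come from a factor already equal to $\mathbf 2$. Everything else is a direct application of Proposition \ref{lemma:2homimage} and Corollary \ref{corollary:projectiveBL} together with the standard direct-product decomposition of finite MV-algebras.
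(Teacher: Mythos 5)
Your proof is correct, but it cannot be compared to ``the paper's own proof'' in the usual sense, because the paper gives none: Theorem \ref{dinola} is quoted from \cite{DiNolaGrigoliaLettieri2008}, and the paper's logic runs in the opposite direction, using the quoted characterization to obtain an alternative proof that locally finite varieties of MV-algebras have strong unitary unification type. Your argument shows that the citation is in fact derivable from the paper's own machinery: Corollary \ref{corollary:projectiveBL} (which rests on the hoop-theoretic Theorem \ref{mth0} and involves no MV-specific input, so there is no circularity) reduces projectivity of a finite $\alg A \in \vv V$ to the existence of a homomorphism $\alg A \longrightarrow \mathbf 2$, and the remaining task --- showing that a finite MV-algebra admits such a homomorphism if and only if $\mathbf 2$ splits off as a direct factor --- is handled correctly by classical structure theory: the decomposition of a finite MV-algebra as a product $\prod_i \alg{\L}_{n_i}$ of finite chains, the simplicity of each $\alg{\L}_{n_i}$, and the Fraser--Horn property of congruence distributive varieties, which together force every quotient to be a subproduct, so a two-element quotient can only arise from a factor $\alg{\L}_{n_i} \cong \mathbf 2$. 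Two small points you should make explicit: the complementary factor $\alg A' = \prod_{j \neq i} \alg{\L}_{n_j}$ lies in $\vv V$ (immediate, being a homomorphic image of $\alg A$), which the statement requires; and your appeal to the term-equivalence between MV-algebras and BL-algebras satisfying $\neg\neg x \app x$ is exactly how the paper itself subsumes MV-algebras under its BL-algebra results in Section \ref{section:unif}. As for what each route buys: \cite{DiNolaGrigoliaLettieri2008} proves the characterization by methods internal to MV-algebra theory, whereas your derivation makes the result self-contained relative to this paper, at the modest cost of importing the standard finite structure theory of MV-algebras, which the paper nowhere develops.
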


Using this characterization, we can get a simple alternative proof of the fact that locally finite varieties of MV-algebras have strong unitary unification type.
\begin{proposition}
	Every locally finite variety $\vv V$ of MV-algebras has strong unitary type.
\end{proposition}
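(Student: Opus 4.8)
The plan is to invoke Proposition~\ref{prop:zerostunitary}, which specializes the abstract criterion of Proposition~\ref{prop:stunitary} to the $\mathsf{FL_{ew}}$ setting: since every MV-algebra is an $\mathsf{FL_{ew}}$-algebra, a locally finite variety $\vv V$ of MV-algebras has strong unitary type if and only if, for every finitely presented $\alg A \in \vv V$, the algebra $\alg A$ has $\mathbf 2$ as a homomorphic image precisely when it is projective. One implication, namely that projectivity forces $\mathbf 2$ to be a homomorphic image, already holds in any variety of $\mathsf{FL_{ew}}$-algebras by Proposition~\ref{lemma:2homimage}. Thus the whole statement reduces to showing that a finite $\alg A \in \vv V$ (recall that, by local finiteness, finitely presented and finite coincide) admitting a surjection onto $\mathbf 2$ is projective.

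Here I would feed this into the characterization of Di Nola, Grigolia and Lettieri (Theorem~\ref{dinola}): a finite algebra in $\vv V$ is projective exactly when it is of the form $\mathbf 2 \times \alg A'$ for some finite $\alg A' \in \vv V$. It therefore suffices to prove that, for a finite MV-algebra, having $\mathbf 2$ as a homomorphic image is equivalent to having $\mathbf 2$ as a direct factor. The direction supplied by the first projection $\mathbf 2 \times \alg A' \to \mathbf 2$ is immediate, so the content lies entirely in the converse.

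For the converse I would use the structure theory of finite MV-algebras. Every finite MV-algebra decomposes as a finite direct product $\alg A \cong \prod_{i=1}^{n} \mathbf C_i$ of finite MV-chains, each of which is simple; moreover, since MV-algebras form a congruence-distributive variety, the congruence lattice of such a product is the Boolean lattice $\mathbf 2^{n}$, whose coatoms are precisely the kernels of the canonical projections $\pi_i$ (the quotient by $\ker \pi_i$ being $\cong \mathbf C_i$). A surjection $\alg A \to \mathbf 2$ has as kernel a maximal congruence, hence a coatom, hence one of these projection kernels, so its image $\mathbf 2$ is isomorphic to one of the factors $\mathbf C_i$. Consequently some $\mathbf C_i \cong \mathbf 2$, and splitting off this factor exhibits $\alg A \cong \mathbf 2 \times \alg A'$ with $\alg A' = \prod_{j \neq i} \mathbf C_j \in \vv V$. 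Reading this back through Theorem~\ref{dinola} and Proposition~\ref{prop:zerostunitary} yields the claim.

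The argument is short once the cited results are assembled; the only genuine step is the equivalence between ``$\mathbf 2$ is a homomorphic image'' and ``$\mathbf 2$ is a direct factor'' for finite MV-algebras. This rests on the semisimplicity of finite MV-algebras, giving the decomposition into finite simple chains, and on the fact that in the congruence-distributive variety of MV-algebras finite direct products carry no \emph{skew} congruences, so that simple quotients coincide with direct factors. I expect this to be the main---indeed essentially the only---point requiring care.
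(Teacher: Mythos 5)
Your argument is correct, but its key step is genuinely different from the paper's. Both proofs share the same outer frame: by Proposition \ref{prop:zerostunitary} and local finiteness it suffices to show that a finite $\alg A \in \vv V$ is projective if and only if it maps onto $\mathbf 2$, with one direction given by Proposition \ref{lemma:2homimage}, and both rely on the Di Nola--Grigolia--Lettieri characterization (Theorem \ref{dinola}). For the crucial converse, the paper uses a short retraction trick: given $g\colon \alg A \to \mathbf 2$, the map $h(a) = \la g(a), a \ra$ embeds $\alg A$ into $\mathbf 2 \times \alg A$ with $p_2 h = \op{id}_{\alg A}$, so $\alg A$ is a retract of $\mathbf 2 \times \alg A$, which is projective by Theorem \ref{dinola}, and hence $\alg A$ is itself projective by Theorem \ref{whitman}; the decomposition $\alg A \cong \mathbf 2 \times \alg A'$ then falls out a posteriori from Theorem \ref{dinola} rather than being proved directly. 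You instead prove head-on that, for finite MV-algebras, ``$\mathbf 2$ is a homomorphic image'' is equivalent to ``$\mathbf 2$ is a direct factor,'' via the decomposition of a finite MV-algebra into a product of finite simple chains and the absence of skew congruences on finite products in a congruence-distributive variety, so that the maximal congruences are exactly the projection kernels. This is sound---both ingredients are standard and your identification of the kernel of the surjection onto $\mathbf 2$ with a projection kernel is correct---but it imports two facts external to the paper (the chain decomposition of finite MV-algebras and the Fraser--Horn property for congruence-distributive varieties) that would need explicit citation or proof. What your route buys is a more transparent structural explanation of why $\mathbf 2$ splits off as a factor, independently of any projectivity considerations; what the paper's route buys is brevity and portability: it needs only the implication ``of the form $\mathbf 2 \times \alg B$ implies projective'' and works verbatim in any variety where $\mathbf 2 \times \alg A$ is projective for every finite $\alg A$, with no structure theory of the variety required.
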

\begin{proof}
If $\alg A$ is finite and unifiable then there is a homomorphism
$f\colon\alg A \longrightarrow \mathbf 2 \times \alg B$ for some finite algebra $\alg B \in \vv V$. Let $p_1$ be the first projection on the direct product and let $g=p_1f$; then the function
$$
h(a) = \la g(a), a\ra
$$
is a monomorphism of $\alg A$ into $\mathbf 2 \times \alg A$, that is projective by Theorem \ref{dinola}. 
Let us consider the second projection $p_2$ on $\alg 2 \times \alg A$. Then $p_2h = id_{\alg A}$ thus
 $\alg A$ is a retract of a projective algebra and therefore projective as well. Notice that if we project this monomorphism
on the first coordinate we get a homomorphism from $\alg A$ to $\mathbf 2$ as predicted by Proposition \ref{prop:zerostunitary}.
\end{proof}

\section{Conclusions}

In this paper  we have investigated commutative integral residuated lattices mainly using some well established constructions, such as the ordinal sum, obtaining results on varieties with some strong structural properties, such as being divisible. In particular, we have shown interesting varieties whose finitely presented algebras are all projective.
This by no means exhausts the investigation of projectivity in commutative residuated lattices and $\mathsf{FL_{ew}}$-algebras. Here we will simply give a preview of what we plan to do next.

The topic of residuated commutative lattices with a Boolean retraction term is promising; there are some general results, which will appear in \cite{AU}.
Moreover in \cite{Ghilardi1999} S. Ghilardi investigated unification in Heyting algebras, proving that they have finitary (but not unitary) type;
in particular he proved that the variety of Stonean Heyting algebras has unitary type (and it can be deduced that it is not strong), and that it is the largest subvariety of Heyting algebras whose type is unitary.
The reader should keep in mind that such results in \cite{Ghilardi1999} are obtained via syntactical methods; though the symbolic and algebraic setting are equivalent, the translation of logical proofs into algebraic ones is anything but straightforward. As a matter of fact we believe that a careful algebraic analysis of the proofs could shed some light on unification in Stonean residuated lattices, and possibly even on commutative residuated lattices with a Boolean retraction term.

We intend to address these problems (and more) in the immediate future.
\section*{Funding}
This work has received funding from the European Union's Horizon 2020 research and innovation programme under the Marie Sklodowska-Curie grant agreement No 890616 awarded to S. Ugolini.



\end{document}